\numberwithin{equation}{section}
\begin{document}

\theoremstyle{plain}
\newtheorem{theorem}{Theorem}[section]
\newtheorem{lemma}[theorem]{Lemma}
\newtheorem{proposition}[theorem]{Proposition}
\newtheorem{corollary}[theorem]{Corollary}
\newtheorem{conjecture}[theorem]{Conjecture}

\theoremstyle{definition}
\newtheorem*{definition}{Definition}

\theoremstyle{remark}
\newtheorem*{remark}{Remark}
\newtheorem{example}{Example}[section]
\newtheorem*{remarks}{Remarks}

\newcommand{\cc}{{\mathbf C}}
\newcommand{\qq}{{\mathbf Q}}
\newcommand{\rr}{{\mathbf R}}
\newcommand{\nn}{{\mathbf N}}
\newcommand{\zz}{{\mathbf Z}}
\newcommand{\pp}{{\mathbf P}}
\newcommand{\al}{\alpha}
\newcommand{\be}{\beta}
\newcommand{\ga}{\gamma}
\newcommand{\ze}{\zeta}
\newcommand{\om}{\omega}
\newcommand{\ep}{\epsilon}
\newcommand{\la}{\lambda}
\newcommand{\de}{\delta}
\newcommand{\De}{\Delta}
\newcommand{\Ga}{\Gamma}
\newcommand{\si}{\sigma}

\title{Blenders}

\author{Bruce Reznick}
\address{Department of Mathematics, University of 
Illinois at Urbana-Champaign, Urbana, IL 61801} 
\email{reznick@math.uiuc.edu}

 \dedicatory{Dedicated to the memory of Julius Borcea}

\subjclass[2000]{Primary: 11E25, 11E76, 11P05, 14P99, 26B25, 52A41}
\begin{abstract}
A blender is a closed convex cone of real homogeneous polynomials that is
also closed under linear changes of variable. Non-trivial blenders only
occur in even degree. Examples include the cones of psd forms, sos
forms, convex forms and sums of $2u$-th powers of forms of degree $v$.
We present some general properties of blenders and analyze the
extremal elements of some specific blenders. 
\end{abstract}
\date{\today}
\maketitle

\maketitle

\section{Introduction and Overview}

Let $F_{n,d}$ denote the vector space of real homogeneous forms
$p(x_1,\dots,x_n)$ of degree $d$.
A blender is a closed convex cone in $F_{n,d}$
which is also closed under linear changes of variable. Blenders were
introduced in \cite{Re1} to help  describe several different familiar
cones of polynomials, but that memoir was mainly concerned with the
cones of psd and sos forms and their duals, and the  discussion of
blenders {\it per se} was scattered there  (pp.\! 36-50, 119-120,
140-142). This paper is devoted to a general discussion of blenders 
and their properties, as well as considering the extremal elements of
some particular blenders not discussed in \cite{Re1}.

Non-trivial blenders will only occur when $d = 2r$ is an even integer.
Choi and Lam \cite{CL1,CL2} named the cone of {\it psd} forms:
\begin{equation}
P_{n,2r}:= \{p \in F_{n,2r} :  u \in \rr^n \implies p(u) \ge 0\},
\end{equation}
and the cone of  {\it sos} forms:
\begin{equation}
\Sigma_{n,2r}:= \biggl\{p \in F_{n,2r} : p = \sum_{k=1}^s h_k^2,\  h_k
\in F_{n,r}\biggr\}.
\end{equation}
Other blenders of interest in \cite{Re1} are the cone of sums of $2r$-th powers:
\begin{equation}
Q_{n,2r}:= \biggl\{p \in F_{n,2r} : p = \sum_{k=1}^s (\al_{k1}x_1 +
\cdots + \al_{kn}x_n)^{2r},\ \al_{kj} \in \rr\biggr \}
\end{equation}
and the ``Waring blenders'': suppose $r = uv$, $u,v \in \nn$ and let:
\begin{equation}
W_{n,(u,2v)}:= \biggl\{p \in F_{n,2r} : p = \sum_{k=1}^s  h_k^{2v},\  h_k
\in F_{n,u}\biggr \}.
\end{equation}
Note that  $W_{n,(r,2)} = \Sigma_{n,2r}$ and $W_{n,(1,2r)} =
Q_{n,2r}$.

The Waring blenders generalize. If $d = 2r$ and $\sum_{i=1}^m u_iv_i =
r$, let
\begin{equation}
W_{n,\{(u_1,2v_1),\dots, (u_m,2v_m)\}}:= \biggl\{p \in F_{n,2r} : p =
\sum_{k=1}^s  h_{k,1}^{2v_1}\cdots h_{k,m}^{2v_m} ,\  h_{k,i}  
\in F_{n,u_i} \biggr\}.
\end{equation}
There has been recent interest in the cones of convex forms: 
\begin{equation}\label{E:kn2r}
K_{n,2r}:= \{p \in F_{n,2r} : p \ \text{is convex}\}.
\end{equation}
We shall use the two equivalent definitions of ``convex'' (see
e.g. \cite[Thm.4.1,4.5]{Ro}): under the 
{\it line segment} definition, $p$ is convex if for all $u, v
\in \rr^n$ and $\la \in [0,1]$,
\begin{equation}
p(\la u + (1 - \la) v) \le \la p(u) + (1-\la)p(v).
\end{equation}
The {\it Hessian} definition says that if 
\begin{equation}\label{E:hes}
Hes(p;u,v):= \sum_{i=1}^n \sum_{j=1}^n \frac{\partial^2p}{\partial
  x_i \partial x_j}(u) v_iv_j,
\end{equation}
then $p$ is convex provided $Hes(p;u,v) \ge 0$ for all $u, v \in
\rr^n$. The cone $K_{n,m}$ appeared in \cite{Re1}, but as
$N_{n,m}$ (see Corollary 4.5). Pablo Parrilo asked
whether every convex form is  sos; that is, is $K_{n,2r} \subseteq
\Sigma_{n,2r}$? This question 
has been answered by Greg Blekherman \cite{B} in the negative. For
fixed $n$, the ``probability'' that a convex form is sos goes to 0 
as $r \to \infty$.  No examples of $p \in
K_{n,2r} \setminus \Sigma_{n,2r}$ are yet known. 

 We now make the definition of blender more precise. 
Suppose $n \ge 1$ and $d \ge 0$. The index set for monomials in
$F_{n,d}$ consists of $n$-tuples of non-negative integers:  
\begin{equation}
 \mathcal I(n,d) = \biggl\lbrace i=(i_1,\dots,i_n): \sum\limits_{k=1}^n
 i_k = d\biggr\rbrace.
\end{equation}
Write $N(n,d) = \binom {n+d-1}{n-1} = |\mathcal I(n,d)|$ and for $i
\in \mathcal I(n,d)$, let 
$c(i) = \frac{d!}{i_1!\cdots i_n!}$ be the associated multinomial coefficient.
The  abbreviation $u^i$ means $u_1^{i_1}\dots u_n^{i_n}$,
where $u$ may be an $n$-tuple of constants or variables.  
Every $p \in F_{n,d}$ can be written as
 \begin{equation}
p(x_1,\dots,x_n)=\sum_{i\in\mathcal I(n,d)} c(i)a(p;i)x^i.
\end{equation}
The identification of $p$ with the $N(n,d)$-tuple $(a(p;i))$ shows that 
$F_{n,d} \approx \rr^{N(n,d)}$ as a vector space. The topology 
placed on $F_{n,d}$ is the usual one: $p_m \to p$ means that for
every $i \in \mathcal I(n,d)$, $a(p_m;i) \to a(p;i)$.

For $\al \in \rr^n$, define $(\al\cdot)^d \in F_{n,d}$ by
\begin{equation}
(\al\cdot)^d(x) = \biggl(\sum_{k=1}^n \al_kx_k\biggr)^d =
\sum_{i\in\mathcal I(n,d)} c(i)\al^ix^i. 
\end{equation}
If $\al$ is regarded as a row vector and $x$ as a column vector,
then $(\al \cdot)^d(x) = (\al x)^d$.
If  $M = [m_{ij}]\in Mat_n(\rr)$ is a (not 
necessarily invertible) real $n\times n$ matrix and $p \in F_{n,d}$, we
define $p\circ M \in F_{n,d}$ by
\begin{equation}
  (p\circ M)(x_1,\dots,x_n)= p(\ell_1,\dots,\ell_n), \qquad
  \ell_j(x_1,\dots,x_n) =   \sum_{k=1}^nm_{jk}x_k.
\end{equation}
If $x$ is viewed as a column vector, then $(p\circ M)(x) =
p(Mx)$;  $(\al\cdot)^d \circ M = (\al M \cdot)^d$.

 Define $[[p]]$ to be $\{p \circ M: M \in Mat_n(\rr)\}$, 
the {\it closed orbit of $p$}. If $ p = q \circ M$ for {\it invertible}
$M$, we write $p \sim q$; invertibility implies that $\sim$ is an
equivalence relation.

\begin{lemma}

\smallskip

\ 

\noindent (i) If $p \in F_{n,d}$ and $d$ is odd, then $p \sim \la p$ for every $0
\neq \la \in \rr$. 

\noindent (ii) If   $p \in F_{n,d}$ and $d$ is even, then $p \sim \la p$
for every $0 < \la \in \rr$. 

\noindent (iii) If $u, \al \in \rr^n$, then there exists a (singular)
$M$ so that $p\circ M = p(u)(\al\cdot)^d.$ 
\end{lemma}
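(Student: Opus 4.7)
The plan is to prove all three parts by constructing the required matrix $M$ explicitly and then invoking homogeneity of $p$.

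For parts (i) and (ii), I would take $M$ to be a scalar multiple of the identity, $M = cI$. Then for $x$ viewed as a column vector, $Mx = cx$, so by the homogeneity of $p$,
\begin{equation*}
(p \circ M)(x) = p(cx) = c^d p(x).
\end{equation*}
Thus the task reduces to solving $c^d = \la$ in $\rr$ with $c \neq 0$ (to guarantee invertibility of $M$). When $d$ is odd, the real $d$-th root exists for every $\la \in \rr$, and it is nonzero exactly when $\la \neq 0$, giving (i). When $d$ is even, a real $d$-th root exists precisely when $\la \ge 0$, and it is nonzero exactly when $\la > 0$, giving (ii).

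For part (iii), I would take $M$ to be the rank-one matrix $m_{ij} = u_i \al_j$ (i.e.\ $M = u\al^T$ when $u$ is a column and $\al$ a row). Then
\begin{equation*}
(Mx)_i = \sum_{j=1}^n u_i \al_j x_j = u_i \bigl(\al \cdot x\bigr),
\end{equation*}
so $Mx = (\al \cdot x)\, u$. Applying $p$ and using homogeneity of degree $d$,
\begin{equation*}
(p \circ M)(x) = p\bigl((\al\cdot x)\, u\bigr) = (\al\cdot x)^d\, p(u) = p(u)\, (\al\cdot)^d(x).
\end{equation*}
The matrix $M$ has rank at most one, hence is singular whenever $n \ge 2$, matching the parenthetical claim in the statement.

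I do not anticipate any real obstacle: each part is essentially a one-line construction followed by the homogeneity identity $p(cv) = c^d p(v)$. The only small subtleties are the sign restriction on $c$ when $d$ is even in (ii), and observing that the rank-one matrix in (iii) genuinely achieves the desired factorization, both of which are immediate from the definitions.
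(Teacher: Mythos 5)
Your proof is correct and follows essentially the same route as the paper: parts (i) and (ii) use $M = cI_n$ together with homogeneity, and part (iii) uses the rank-one matrix $m_{jk} = u_j\al_k$. The only extra detail you add is the explicit solving of $c^d = \la$ for $c$, which the paper leaves implicit; the observation that $M$ in (iii) is singular only for $n \ge 2$ is a harmless aside not needed for the argument.
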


\begin{proof}
For (i), (ii), observe that  $(p \circ (cI_n)) =
c^dp$ since $p$ is homogeneous, and $cI_n$ is invertible if $c
\neq 0$. For (iii), note that if   $m_{jk} = u_j\al_k$
for $1 \le j,k \le n$, then 
\begin{equation}
 \ell_j(x) = u_j\sum_{k=1}^n \al_k x_k = (\al x)u_j \implies  (p\circ
 M)(x_1,\dots,x_n) = (\al x)^dp(u_1,\dots, u_n)
\end{equation} 
by homogeneity.
\end{proof}
\begin{definition}
A set $B \subseteq F_{n,d}$ is a {\it blender} if these conditions hold:

\smallskip
\noindent (P1) If $p, q \in B$, then $p+q   \in B$. 

\noindent  (P2) If $p_m \in B$ and $p_m \to p$, then $p \in
B$. 

\noindent  (P3) If $p \in B$ and  $M \in Mat_n(\rr)$, then $p
  \circ M \in B$.
\end{definition}
Thus, a blender is a closed convex cone of forms which is also
a union of closed orbits. Lemma 1.1 makes it unnecessary
to specify in (P1) that $p \in B$  and $\la \ge 0$ imply $\la p \in
B$. Let $\mathcal B_{n,d}$ denote the set of blenders in $F_{n,d}$.
Trivially, $\{0\}, F_{n,d} \in \mathcal B_{n,d}$. 

It is simple to see that $P_{n,2r}$ is a blender: conditions (P1) and
(P2) can be verified pointwise and if $p(u) \ge
0$ for every $u$, then the same will be true for $p(Mu)$. 
Similarly, $K_{n,2r}$ is a blender because (P1) and (P2) follow from
the  Hessian definition and (P3) follows from the line segment definition.

If $B_1, B_2 \in \mathcal B_{n,d}$, then  $B_1 \cap B_2 \in \mathcal
B_{n,d}$. Define the {\it Minkowski sum} 
\begin{equation}\label{E:b1+b2}
B_1+B_2:= \{p_1+p_2: p_i \in B_i\}.
\end{equation}
 The smallest blender containing both $B_1$ and $B_2$ must
include $B_1+B_2$; this set is a blender (Theorem 3.5(i)), but it
requires an argument to   prove  (P2). It is not hard to see that  
$\mathcal B_{n,d}$ is not always a chain. Let $(n,d) = (2,8)$ and let $B_1
=W_{2,\{(1,6),(1,2)\}}$ and $B_2 =  W_{2,\{(1,4),(1,4)\}}$. Then $x^6y^2 \in
B_1$ and $x^4y^4 \in B_2$. If $x^6y^2 \in B_2$, then
\begin{equation}
x^6y^2 = \sum_{k=1}^s(\al_k x + \be_k y)^4(\ga_k x + \de_k y)^4.
\end{equation}
A consideration of the coefficients of $x^8$ and $y^8$ shows that
$\al_k\ga_k = \be_k\de_k=0$ for all $k$, hence the only non-zero
summands are positive multiples of $x^4y^4$. Thus $x^6y^2 \not\in
B_2$, and, similarly, $x^4y^4 \not\in B_1$, so $B_1 \setminus B_2$ and $B_2
\setminus B_1$ are both non-empty. It is not clear which octics belong to
$B_1 \cap B_2$ and $B_1 + B_2$.
If $B_1 \in \mathcal B_{n,d_1}$
  and $B_2 \in \mathcal B_{n,d_2}$,  define
\begin{equation}\label{E:b1*b2}
B_1*B_2:= \left\{\sum_{k=1}^s  p_{1,k}p_{2,k}: p_{i,k} \in B_i \right\}.
\end{equation}
Again, this is a blender (Theorem 3.5(ii)), but (P2) is not trivial to prove.

We review some standard facts about convex cones; see \cite[Ch.2,3]{Re1}
and \cite{Ro}. 
If $C \subset \rr^N$ is a closed convex cone, then $u \in C$ is {\it
  extremal} if $u = v_1 + v_2, v_i \in C$, implies that $v_i = \la_i
u$, $\la_i \ge 0$. The set of extremal elements in $C$ is denoted
$\mathcal E(C)$.
All cones $C \neq 0, \rr^N$ in this
paper have the property that $x, -x \in C$ implies $x
= 0$. In such a cone, every element in $C$ is a sum of
extremal elements.  (It will follow from Prop.\!\!  2.4 that if $B \in \mathcal
B_{n,d}$  and $p,-p \in B$ for some $p \neq 0$, then $B = F_{n,d}$.)

 As usual, $u$ is {\it interior} to $C$  if $C$ contains a
 non-empty open ball  centered at $u$. The set of interior points of
$C$ is denoted $int(C)$, and the boundary of $C$ is denoted
$\partial(C)$. The next definition depends on the inner product. If $C$
is a closed convex cone, let
\begin{equation}
C^* = \{ v \in \rr^N : [u,v] \ge 0\quad \text{for all}\quad u \in C\}.
\end{equation}
Then $C^* \subset \rr^N$ is also a closed convex cone and $(C^*)^* =
C$; $C$ and $C^*$ are {\it dual} cones.

If $u \in C$ (and $\pm x \in C$ implies $x = 0$),
then $u \in int(C)$ if and only if $[u,v]>0$ for every
$0 \neq v \in C^*$ (see e.g. \cite[p.26]{Re1}). Thus, if $u
\in \partial(C)$  (in particular, if $u$ is extremal), then there
exists $v \in C^*$, $v \neq 0$ so that $[u,v] = 0$. 

This discussion applies to blenders by identifying $p \in F_{n,d}$ with
the $N(n,d)$-tuple of its coefficients. For example, $p \in int(B)$ if
there exists $\epsilon >0$ so that if $|a(q;i)| < \epsilon$ for all $i
\in \mathcal I(n,d)$, then $p +
q \in B$.  If $p \sim q \in B$, then $p$ and $q$ simultaneously belong
to (or do not belong to) $int(B), \partial(B), \mathcal E(B)$.
 We shall discuss in section two the natural inner product
on $F_{n,d}$. It turns out that, under this inner product, $P_{n,2r}$
and $Q_{n,2r}$ are 
dual cones (Prop.\!\! 3.8), as are $K_{n,2r}$ and
$W_{n,\{(1,2r-2),(1,2)\}}$ (Theorem 3.11).

The description of $\mathcal E(P_{n,2r})$ is extremely difficult if $n
\ge 3$. (See e.g \cite{CL1, CL2, CLRsex,CLR, Ha, ReAGI,Re4}.) Every element of
$\mathcal E(\Sigma_{n,2r})$ obviously has the form $h^2$, but not
every square is extremal; e.g., 
\begin{equation}\label{E:h22}
(x^2+y^2)^2 = (x^2-y^2)^2 + (2xy)^2 =\frac1{18} \left((\sqrt 3\ x +
y)^4 + (\sqrt 3\ x - y)^4 + 16y^4 \right). 
\end{equation} 

We now describe the contents of this paper. Section two reviews the
relevant results from \cite{Re1} regarding
 the inner product and its many properties. The
principal results are that if $B \in \mathcal B_{n,d}$ and $B \neq \{0\},
F_{n,d}$, then $d=2r$ is even and $Q_{n,2r} \subset \pm B \subset
  P_{n,2r}$ (Prop.\!\!  2.5);  the dual cone to a blender is also a
blender (Prop.\!\!  2.7). Section three begins with a number of
preparatory lemmas, mainly involving convergence. We show that if
$B_i$ are blenders, then so are $B_1+B_2$ and $B_1*B_2$ (Theorem 3.5)
and hence the Waring blenders and their generalizations are blenders
(Theorems 3.6, 3.7). We show that $P_{n,2r}$ and $Q_{n,2r}$ are dual
and give a description of $W_{n,(u,v)}^*$ (both from \cite{Re1}) and
show that $K_{n,2r}$ and $W_{n,\{(1,2r-2),(1,2)\}}$ are dual (Theorem
3.11). In section four, we consider $K_{n,2r}$. We show that it cannot
be decomposed non-trivially as $B_1*B_2$ (Corollary 4.2), and 
that $K_{n,2r}=N_{n,2r}$  (c.f.\! \eqref{E:kn2r}, \eqref{E:nnd},
Corollary 4.5). We also show that if $p$ is positive definite, then $(\sum
x_i^2)^Np$ is convex for sufficiently large $N$ (Theorem
4.6). In section five, we show that (up to $\pm$) $\mathcal B_{2,4}$
consists of a one-parameter family of blenders $B_{\tau}$, $\tau \in
[-\frac 13, 0]$, where $\tau = \inf\{\la: x^4 + 6\la x^2y^2 + y^4 \in
B_{\tau}\}$, increasing from $Q_{2,4}=B_0$ to $P_{2,4}=B_{-\frac 13}$,
and that $B_{\tau}^* = B_{U(\tau)}$, where $U(\tau) =
  -\frac{1+3\tau}{3-3\tau}$ (Theorem 5.7). In 
section six, we review the results of $K_{2,4}$ and $K_{2,6}$ in
\cite{D1,D2,Re00} by Dmitriev and the author, and give some new
examples in $\partial(K_{2,2r})$.  The 
full analysis of $\mathcal E(K_{2,2r})$ seems intractable for $r \ge
4$. Finally, in section seven, we look at sums of 4th
powers of binary forms. Conjecture 7.1 states that $p \in W_{2,(u,4)}$
if and only if $p = f^2 + g^2$, where $f,g \in P_{n,2u}$. We show that
this is true for $u=1$ and for even symmetric octics $p$ (Theorems
7.3, 7.4). Our classification of even symmetric octics implies that
\begin{equation}\label{E:48}
x^8 + \al x^4y^4 + y^8 \in W_{2,(2,4)} \iff \al \ge - \tfrac {14}9.
\end{equation}

I would like to thank the organizers of  BIRS
10w5007, Convex Algebraic Geometry, held at Banff in February, 2010,
for the opportunity to speak. I would also like to thank  my fellow
participants for many stimulating conversations. Sections four and six were
particularly influenced by this meeting. I also thank Greg Blekherman for
very helpful email discussions. Special thanks to  Peter Kuchment, a 
classmate of V. I. Dmitriev, for trying contact him for
me. Finally, I thank the editors of this volume for the opportunity to
contribute to this memorial volume in memory of Prof. Borcea.

\section{The inner product}

For $p$ and $q$ in $F_{n,d}$, we define an inner product with deep
roots in 19th century algebraic geometry and analysis. Let
\begin{equation}\label{E:ip}
[p,q] = \sum_{i \in \mathcal I(n,d)} c(i)a(p;i)a(q;i). 
\end{equation}
This is the usual Euclidean inner product, if $p \leftrightarrow
(c(i)^{1/2}a(p;i)) \in \rr^N$. The many properties of this inner
product (see Props.\!\! 2.1, 2.6 and 2.9) strongly suggest that this
is the ``correct'' inner product for $F_{n,d}$. We present without
proof the following observations about the inner product.
\begin{proposition}\cite[pp.2,3]{Re1}
\ 

\noindent (i) $[p,q] = [q,p]$.

\noindent (ii) $j \in \mathcal I(n,d) \implies [p,x^j] = a[p;j]$.

\noindent (iii) $\al \in \rr^n \implies [p,(\al\cdot)^d] = p(\al)$.

\noindent (iv) If $p_m \to p$, then $[p_m,q] \to [p,q]$ for every $q \in
  F_{n,d}$.

\noindent (v) In particular, taking $q = (u \cdot)^d$, $p_m \to p \implies
  p_m(u) \to p(u)$ for all $u \in \rr^n$.
\end{proposition}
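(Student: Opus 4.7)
The plan is to verify each of the five assertions by unpacking the definitions, since the inner product is just a weighted Euclidean inner product on the coefficient vectors.

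For (i), I would observe that $[p,q] = \sum_i c(i) a(p;i) a(q;i)$ is manifestly symmetric in $p$ and $q$, since each $c(i) > 0$ and scalar multiplication commutes, so the identity is immediate.

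For (ii) and (iii), the key is to compute $a(q;i)$ for the two special choices of $q$. For $q = x^j$, the identity $q(x) = \sum_i c(i) a(x^j;i) x^i = x^j$ forces $a(x^j;j) = 1/c(j)$ and $a(x^j;i) = 0$ for $i \neq j$, so only the $i=j$ summand survives in $[p,x^j]$, giving $c(j) a(p;j) (1/c(j)) = a(p;j)$. For $q = (\al\cdot)^d$, the multinomial expansion displayed just before the proposition reads $(\al\cdot)^d(x) = \sum_i c(i) \al^i x^i$, so $a((\al\cdot)^d;i) = \al^i$, and plugging in yields
\begin{equation*}
[p,(\al\cdot)^d] = \sum_{i \in \mathcal I(n,d)} c(i) a(p;i) \al^i = p(\al),
\end{equation*}
matching the monomial expansion of $p(\al)$.

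For (iv), the topology on $F_{n,d}$ is defined coordinatewise, so $p_m \to p$ means $a(p_m;i) \to a(p;i)$ for every $i \in \mathcal I(n,d)$. Since $\mathcal I(n,d)$ is finite, the sum $\sum_i c(i) a(p_m;i) a(q;i)$ is a finite linear combination of convergent sequences, so it converges to $\sum_i c(i) a(p;i) a(q;i) = [p,q]$. Finally, (v) is just the specialization of (iv) to $q = (u\cdot)^d$ combined with (iii): the left-hand side $[p_m, (u\cdot)^d] = p_m(u)$ converges to $[p,(u\cdot)^d] = p(u)$.

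There is no serious obstacle here; the entire proposition is a sequence of direct computations from the definition, and in fact the author already flags it as presented ``without proof'' as a list of observations. The only minor point requiring care is remembering the multinomial normalization $c(i)$ in the definitions of both $a(p;i)$ and the inner product, so that the factors cancel cleanly in (ii) and combine correctly in (iii).
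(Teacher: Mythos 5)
Your verification is correct and complete; the paper itself presents this proposition without proof (citing \cite[pp.2,3]{Re1}), so there is no in-text argument to compare against. Your direct computation of the coefficients $a(x^j;i) = \delta_{ij}/c(j)$ and $a((\al\cdot)^d;i) = \al^i$, followed by substitution into the definition of $[\cdot,\cdot]$, is exactly the standard and intended route.
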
 

The orthogonal complement of a subspace $U$ of $F_{n,d}$, 
\begin{equation}
U^\perp = \{ v \in F_{n,d}: [u,v] = 0\quad \text{for all}\quad u \in U\}, 
\end{equation}
 is  also a subspace of $F_{n,d}$ and $(U^\perp)^\perp = U$. 
The following result is widely-known and has been frequently proved over the
last century, see e.g.\cite[p.30]{Re1}. 
\begin{proposition}\cite[p.93]{Re1}
Suppose $S \subset \rr^n$ has non-empty interior. Then
$F_{n,d}$ is spanned by $\{(\al\cdot)^d: \al \in S \}$.
\end{proposition}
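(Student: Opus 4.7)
The plan is to argue by contradiction using the duality identity $[p,(\al\cdot)^d] = p(\al)$ from Proposition 2.1(iii), turning the spanning question into a statement about vanishing of polynomials on an open set.

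Specifically, let $V \subseteq F_{n,d}$ denote the linear span of $\{(\al\cdot)^d : \al \in S\}$, and suppose for contradiction that $V \neq F_{n,d}$. Since $V$ is a subspace of the finite-dimensional inner product space $F_{n,d}$, its orthogonal complement $V^\perp$ is nonzero, so there exists $0 \neq p \in F_{n,d}$ with $[p,q] = 0$ for every $q \in V$. In particular, $[p,(\al\cdot)^d] = 0$ for every $\al \in S$. By Proposition 2.1(iii), this says precisely that
\begin{equation}
p(\al) = 0 \qquad \text{for every } \al \in S.
\end{equation}

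The remaining step is a standard fact from real analysis: a polynomial that vanishes on a set with non-empty interior in $\rr^n$ must be identically zero. One can see this by picking an open ball $B \subseteq S$, fixing all but one variable at a point of $B$, and observing that the resulting univariate polynomial vanishes on an open interval and is therefore zero; iterating over coordinates forces all coefficients of $p$ to vanish. This contradicts $p \neq 0$, so $V = F_{n,d}$ as claimed.

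The argument is genuinely short because the inner product in Section 2 is designed so that evaluation at $\al$ is the same as pairing with $(\al\cdot)^d$; the only substantive ingredient is the Zariski-density of any Euclidean open set, which is elementary. There is no real obstacle beyond making sure Proposition 2.1(iii) is in hand, so I would simply cite it and then invoke the polynomial-vanishing lemma.
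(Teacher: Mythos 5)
Your argument is the same as the paper's: both take the orthogonal complement of the span, use $[p,(\al\cdot)^d]=p(\al)$ from Proposition 2.1(iii) to convert orthogonality into vanishing of $p$ on $S$, and conclude $p=0$ since a polynomial vanishing on an open set is identically zero. The paper states this directly (showing $U^\perp=\{0\}$) rather than by contradiction, but that is a purely cosmetic difference.
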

\begin{proof}
Let $U$ be the subspace of $F_{n,d}$ spanned by $\{(\al\cdot)^d:
\al \in S \}$ and suppose  
$q \in U^{\perp}$. Then  $0 = [q,(\al\cdot)^d] = q(\al)$ for all $\al \in S$.
Since $q$ is a polynomial which vanishes on an open set, $q = 0$. 
Thus, $U^{\perp} = \{0\}$, so $U = (U^\perp)^\perp = \{0\}^\perp = F_{n,d}$.
 \end{proof}

\begin{proposition}[Biermann's Theorem]\cite[p.31]{Re1}
The set $\{(i \cdot)^d : i \in \mathcal I(n,d)\}$ is a basis for
$F_{n,d}$.  
\end{proposition}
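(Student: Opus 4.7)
My plan is to pass through the inner product of Proposition 2.1 and reduce Biermann's theorem to a classical multivariate Lagrange interpolation fact. Since the cardinality $|\mathcal I(n,d)| = N(n,d) = \dim F_{n,d}$, it suffices to establish linear independence of the family $\{(i\cdot)^d : i \in \mathcal I(n,d)\}$.

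Suppose $\sum_{i \in \mathcal I(n,d)} \la_i (i\cdot)^d \equiv 0$. Evaluating this form at an arbitrary $\al \in \rr^n$ (equivalently, pairing with $(\al\cdot)^d$ via Proposition 2.1(iii)) yields
\[
 \sum_{i \in \mathcal I(n,d)} \la_i\, (i \cdot \al)^d \;=\; 0, \qquad \al \in \rr^n.
\]
The left side is itself a polynomial in $\al$; expanding $(i\cdot\al)^d = \sum_{j \in \mathcal I(n,d)} c(j)\, i^j \al^j$ and comparing coefficients of each $\al^j$ converts the identity into the linear system
\[
\sum_{i \in \mathcal I(n,d)} \la_i\, i^j \;=\; 0, \qquad j \in \mathcal I(n,d).
\]
Equivalently, the linear functional $L(P) := \sum_i \la_i\, P(i)$ on $F_{n,d}$ annihilates every monomial $x^j$ and hence all of $F_{n,d}$.

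It therefore remains to show that the evaluation functionals $\delta_i : P \mapsto P(i)$, indexed by $i \in \mathcal I(n,d)$, span $F_{n,d}^*$; equivalently, that no nonzero form in $F_{n,d}$ vanishes on the entire lattice $\mathcal I(n,d)$. This is where the real content of the theorem lies, and I expect it to be the main obstacle. Dehomogenizing (set $x_n = 1$) reduces the claim to the classical interpolation assertion that a polynomial in $\rr[x_1,\dots,x_{n-1}]$ of total degree at most $d$ which vanishes on the simplex lattice $\{(i_1,\dots,i_{n-1}) \in \zz_{\ge 0}^{n-1} : i_1 + \cdots + i_{n-1} \le d\}$ must be identically zero. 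I would prove this by double induction on the number of variables and on $d$: the restriction $x_{n-1} = 0$ vanishes on the analogous simplex lattice in one fewer variable, so by induction $x_{n-1}$ divides $P$; writing $P = x_{n-1} R$ with $\deg R \le d-1$, the shifted polynomial $\tilde R(x_1,\dots,x_{n-1}) := R(x_1,\dots,x_{n-2}, x_{n-1}+1)$ has degree at most $d-1$ and vanishes on the full simplex lattice at level $d-1$, whereupon induction on $d$ finishes the job. Once this interpolation lemma is available, the pairing argument above together with the dimension count delivers Biermann's theorem.
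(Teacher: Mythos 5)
Your reduction is correct through the step that a nontrivial relation $\sum_i \la_i (i\cdot)^d = 0$ would force the linear functional $p \mapsto \sum_i \la_i p(i)$ to annihilate $F_{n,d}$, and, by the dimension count, that linear independence of $\{(i\cdot)^d\}$ is equivalent to the assertion that no nonzero form in $F_{n,d}$ vanishes on all of $\mathcal I(n,d)$. Your double induction proving that a polynomial of total degree at most $d$ in $n-1$ variables vanishing on the simplex lattice must be zero is also correct. But the dehomogenization connecting these two facts is wrong as stated. The points of $\mathcal I(n,d)$ lie on the affine hyperplane $\{x : x_1 + \cdots + x_n = d\}$, not on $\{x_n = 1\}$; if you set $x_n = 1$, the resulting polynomial $Q(y_1,\dots,y_{n-1}) = P(y_1,\dots,y_{n-1},1)$ does \emph{not} inherit vanishing on the simplex lattice, since $P(i_1,\dots,i_{n-1},1)$ need not equal $P(i_1,\dots,i_{n-1},i_n)$. (Already for $n=d=2$: one knows $P(2,0)=0$ but has no information about $P(2,1)$.) The correct substitution is $x_n \mapsto d - (x_1 + \cdots + x_{n-1})$, giving $Q(y) := P(y_1,\dots,y_{n-1}, d - \sum_k y_k)$ of degree at most $d$ which does vanish on the simplex lattice. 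One then also needs to close the loop: $Q \equiv 0$ means $P$ vanishes on the hyperplane $\sum_k x_k = d$, hence by homogeneity on every hyperplane $\sum_k x_k = c$ with $c \ne 0$, hence on a set with nonempty interior, hence $P = 0$.

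With that repaired, your argument is a legitimate alternative to the paper's and is closely related to it. You prove non-constructively, by induction, that the evaluation map $p \mapsto (p(i))_{i\in\mathcal I(n,d)}$ on $F_{n,d}$ is injective (hence bijective), so that the dual basis to the evaluations $\delta_i$ exists. The paper instead writes this dual basis down explicitly: the forms $g_j = \prod_k \prod_{\ell=0}^{j_k-1}\bigl(d x_k - \ell\sum_m x_m\bigr)$ are exactly the Lagrange interpolation basis for the simplex lattice (after the same affine identification), and one verifies by inspection that $g_j(i) = 0$ for $i \ne j$ while $g_j(j) \ne 0$. The paper's route is shorter once the formula is in hand and produces an explicit inverse; yours makes the underlying mechanism (classical multivariate Lagrange interpolation on a simplicial grid) more transparent, at the cost of carrying out the induction.
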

\begin{proof}
We note that there are $N(n,d)$ such forms, so it suffices to construct
a dual set $\{g_j :  j \in \mathcal I(n,d)\} \subset F_{n,d}$  
so that $[g_j,(i \cdot)^d] = 0$ if $j \neq i$ and  $[g_i,(i \cdot)^d] > 0$. Let
\begin{equation}\label{E:bier}
g_j(x_1,\dots,x_n) = \prod_{k=1}^n \prod_{\ell =0}^{j_k-1} (d x_k - \ell(x_1 +
\cdots + x_n)).
\end{equation}
Each $g_j$ is a product of $\sum_k j_k = d$ linear factors, so $g_j
\in F_{n,d}$. The $(k,\ell)$ factor in \eqref{E:bier} vanishes
 at any $x = i \in \mathcal I(n,d)$ for which $i_k = \ell$. Thus,
$[g_j,(i \cdot)^d] = g_j(i) = 0$ if $i_k \le j_k-1$ for any $k$. Since
$\sum_k i_k = \sum_k j_k$, it follows that $g_j(i) = 0$ if $j \neq i$.
A computation shows that $g_i(i) = d^d\prod_k (i_k !) = d^d d!/c(i)$. 
\end{proof} 
 Prop.\!\! 2.3  implies Prop.\!\! 2.2 directly, upon 
mapping $\mathcal I(n,d)$ linearly into $S$.

\begin{proposition}\cite[p.141]{Re1}
Suppose $B \in \mathcal B_{n,d}$ and there exist $p,q \in 
B$ and $u,v \in \rr^n$ so that $p(u) > 0 > q(v)$. Then $B = F_{n,d}$. 
\end{proposition}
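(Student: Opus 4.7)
The plan is to leverage Lemma 1.1(iii) to extract scalar multiples of every power $(\alpha\cdot)^d$ from both $p$ and $q$, with opposite signs, and then invoke Proposition 2.2 to conclude that $B$ contains all of $F_{n,d}$.

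First I would note that (P3) together with Lemma 1.1(ii) (and its odd-degree analogue) implies that $B$ is closed under multiplication by positive scalars: for $c > 0$, take $M = cI_n$ to get $c^d p \in B$, and use invertibility of $cI_n$. Combined with (P2) applied to $p_m = \frac{1}{m}p$, this gives $0 \in B$ as well. Now for any $\alpha \in \rr^n$, apply Lemma 1.1(iii) to the pair $(u, \alpha)$: there is a singular matrix $M_{\alpha}$ with $p \circ M_{\alpha} = p(u)(\alpha\cdot)^d$, which lies in $B$ by (P3). Dividing by the positive number $p(u)$ yields $(\alpha\cdot)^d \in B$.

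Next I would run the same argument with $q$ in place of $p$ and $v$ in place of $u$: there exists $M_{\alpha}'$ with $q \circ M_{\alpha}' = q(v)(\alpha\cdot)^d \in B$. Since $q(v) < 0$, dividing by $|q(v)| > 0$ gives $-(\alpha\cdot)^d \in B$. Thus both $(\alpha\cdot)^d$ and $-(\alpha\cdot)^d$, and hence every real multiple $c(\alpha\cdot)^d$, belongs to $B$, for every $\alpha \in \rr^n$.

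Finally, I would apply Proposition 2.2 with $S = \rr^n$: the forms $\{(\alpha\cdot)^d : \alpha \in \rr^n\}$ span $F_{n,d}$. So every $f \in F_{n,d}$ admits an expression $f = \sum_{i=1}^k c_i (\alpha_i\cdot)^d$ with $c_i \in \rr$. Splitting each $c_i$ as $c_i^+ - c_i^-$ with $c_i^\pm \ge 0$, I write
\begin{equation*}
f = \sum_{i=1}^k c_i^+ (\alpha_i\cdot)^d + \sum_{i=1}^k c_i^- \bigl(-(\alpha_i\cdot)^d\bigr),
\end{equation*}
a sum of elements of $B$, which by (P1) lies in $B$. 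Hence $B = F_{n,d}$.

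There is no real obstacle here; the only point requiring a little care is that for even $d$ the relation $p \sim \lambda p$ only holds for $\lambda > 0$, so one genuinely needs both hypotheses $p(u) > 0$ and $q(v) < 0$ in order to generate negative as well as positive multiples of the powers $(\alpha\cdot)^d$.
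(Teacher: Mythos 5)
Your proof is correct and follows exactly the paper's (terse) argument: use Lemma 1.1(iii) with the positive value $p(u)$ and the negative value $q(v)$ to get $\pm(\alpha\cdot)^d \in B$, then span via Proposition 2.2 and close under sums by (P1). You have simply filled in the scaling and convexity details that the paper leaves implicit.
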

\begin{proof}
By Lemma 1.1(iii), $\pm(\al\cdot)^d \in
B$ for $\al \in \rr^n$, so by Prop.\!\! 2.2, $F_{n,d} \subseteq B$. 
\end{proof}
This is the argument Ellison used in \cite[p.667]{E} to show that
every $p \in  F_{n,u(2v+1)}$ is a sum of $(2v+1)$-st powers of $h_k
\in F_{n,u}$.  

Let $-B = \{ -h: h \in B\}$. It is easy to check
that if $B$ is a blender, then so is $-B$.
\begin{proposition}\cite[p.141]{Re1}
If $B \neq \{0\}, F_{n,d}$ is a blender, then $d=2r$ is even
and for a suitable choice of sign, $Q_{n,2r} \subseteq \pm B
\subseteq P_{n,2r}$.
\end{proposition}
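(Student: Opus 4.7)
The plan is to split on the parity of $d$ and in each case exploit Lemma~1.1(iii): for any $p \in B$ with $p(u) \neq 0$, every scalar multiple $p(u)(\al\cdot)^d$ of an arbitrary power form lies in $B$.

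\emph{Odd $d$ is impossible.} Suppose $d$ is odd and pick any $0 \neq p \in B$, choosing $u \in \rr^n$ with $p(u) \neq 0$. Lemma~1.1(iii) gives $p(u)(\al\cdot)^d \in B$ for every $\al \in \rr^n$. By Lemma~1.1(i), $-p \sim p$, so $-p = p \circ M$ for some invertible $M$, and (P3) places $-p$ in $B$. Applying Lemma~1.1(iii) to $-p$ yields $-p(u)(\al\cdot)^d \in B$, so since $B$ is a convex cone we have $c(\al\cdot)^d \in B$ for every $c \in \rr$ and $\al \in \rr^n$. But Prop.~2.2 (with $S = \rr^n$) says the family $\{(\al\cdot)^d : \al \in \rr^n\}$ spans $F_{n,d}$, so every $q \in F_{n,d}$ is a real linear combination of power forms, each summand lying in $B$. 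Then (P1) forces $q \in B$, hence $B = F_{n,d}$, contradicting the hypothesis. So $d = 2r$ is even.

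\emph{The even-degree sandwich.} Assume $d = 2r$ and $B \neq F_{n,d}$. The contrapositive of Prop.~2.4 prohibits the simultaneous existence of $p, q \in B$ and $u, v$ with $p(u) > 0 > q(v)$, so either every $p \in B$ is pointwise $\ge 0$ or every $p \in B$ is pointwise $\le 0$ (if both held then $B = \{0\}$, which is excluded). The set $-B$ is routinely a blender whenever $B$ is, so after possibly replacing $B$ by $-B$ we may assume $B \subseteq P_{n,2r}$. For the reverse inclusion, pick $0 \neq p \in B$; since $p \ge 0$ pointwise and $p \not\equiv 0$, some $u \in \rr^n$ gives $p(u) > 0$. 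Lemma~1.1(iii) then puts $p(u)(\al\cdot)^{2r}$ in $B$ for every $\al$, and dividing by the positive scalar $p(u)$ (legal in a cone) gives $(\al\cdot)^{2r} \in B$. Closure under finite sums (P1) now yields $Q_{n,2r} \subseteq B$, completing the chain $Q_{n,2r} \subseteq B \subseteq P_{n,2r}$.

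There is no deep obstacle: the proposition is essentially a packaging of Lemma~1.1 with Props.~2.2 and~2.4. The structural point worth flagging is the role of parity. In odd degree, Lemma~1.1(i) supplies the negative scalar multiples needed to attack an arbitrary form via the spanning statement of Prop.~2.2, forcing $B = F_{n,d}$; in even degree only non-negative scalars are available from Lemma~1.1(ii), and what one obtains from the same mechanism is instead exactly the Waring cone $Q_{n,2r}$ of sums of $2r$-th powers.
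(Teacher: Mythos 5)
Your proof is correct and follows essentially the same route as the paper: establish even degree via the impossibility argument, use Prop.~2.4 to force $\pm B \subseteq P_{n,2r}$, and use Lemma~1.1(iii) with (P1) to obtain $Q_{n,2r} \subseteq \pm B$. The only cosmetic difference is in the odd case, where the paper simply observes $p(-a) = -p(a)$ and cites Prop.~2.4 directly, while you re-derive the same conclusion by routing through Lemma~1.1(i) and the spanning argument of Prop.~2.2 — i.e., you effectively rerun the proof of Prop.~2.4 rather than invoke it.
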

\begin{proof}
If $B \neq \{0\}$, then there exists $p \in B$ and
$a \in \rr^n$ so that $p(a) \neq 0$. If $d$ is odd, then $p(-a) =
-p(a)$, and by Prop.\!\! 2.4, $B = F_{n,d}$. If $d$ is
even, by taking $-B$ if necessary, we may assume that $p(a)
\ge 0$. Thus, if $B \neq F_{n,2r}$, then $\pm B \subseteq
P_{n,2r}$. On the other hand, 
Lemma 1.1 and (P1) imply that $Q_{n,2r} \subseteq \pm B$. 
\end{proof}
Since $Q_{n,2} = P_{n,2}$,  there are no ``interesting''
blenders of quadratic forms. 

The inner product has a useful contravariant property.
\begin{proposition} \cite[p.32]{Re1}
 Suppose $p$, $q\in F_{n,d}$ and $M \in Mat_n(\rr)$. Then 
\begin{equation}\label{E:contra}
[p\circ M,q]=[p,q\circ M^t].
\end{equation}
\end{proposition}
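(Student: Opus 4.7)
The plan is to exploit the two key properties established just before this proposition: the evaluation identity $[p,(\alpha \cdot)^d] = p(\alpha)$ from Prop.~2.1(iii), and the fact that the ``power forms'' $(\alpha \cdot)^d$ span $F_{n,d}$, proved in Prop.~2.2. Together with the transformation rule $(\alpha\cdot)^d \circ N = (\alpha N\cdot)^d$ noted in the introduction, these should reduce the claim to a near-tautology.

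First I would fix $p \in F_{n,d}$ and $M \in Mat_n(\rr)$, and show that both sides of \eqref{E:contra} are linear in $q$. Linearity is immediate from the bilinearity of $[\cdot,\cdot]$ together with the fact that $q \mapsto q \circ M^t$ is a linear operator on $F_{n,d}$. Thus it suffices to verify \eqref{E:contra} when $q$ ranges over a spanning set of $F_{n,d}$, and by Prop.~2.2 I can take this spanning set to be $\{(\alpha\cdot)^d : \alpha \in \rr^n\}$.

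Next I would compute both sides for $q = (\alpha\cdot)^d$. The left-hand side becomes
\[
[p \circ M, (\alpha\cdot)^d] = (p \circ M)(\alpha) = p(M\alpha),
\]
by Prop.~2.1(iii) and the definition of $p \circ M$ (interpreting $\alpha$ as a column vector per the paper's convention). The right-hand side, using $(\alpha\cdot)^d \circ M^t = (\alpha M^t\cdot)^d$, becomes
\[
[p, (\alpha M^t\cdot)^d] = p(\alpha M^t).
\]
As $n$-tuples, the column vector $M\alpha$ and the row vector $\alpha M^t$ represent the same point in $\rr^n$, so $p(M\alpha) = p(\alpha M^t)$ and the two sides agree.

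The main ``obstacle'' is really just bookkeeping with the row-versus-column convention --- substantively this proof is just three lines, chaining the transformation rule for $(\alpha\cdot)^d$ with the evaluation-is-pairing identity. One could alternatively verify \eqref{E:contra} by a direct calculation with the monomial expansion \eqref{E:ip} and the explicit coefficients of $p \circ M$, but that approach is considerably messier and obscures why the identity holds; the reduction to power forms makes it transparent that \eqref{E:contra} is exactly the statement that pulling back an evaluation point by $M$ is adjoint to pushing forward a form by $M^t$.
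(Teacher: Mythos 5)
Your proof is correct and follows essentially the same route as the paper: reduce to power forms via Prop.~2.2, apply the transformation rule $(\alpha\cdot)^d \circ N = (\alpha N\cdot)^d$, and unwind the pairing. The only (minor) difference is that the paper reduces \emph{both} $p$ and $q$ to $d$-th powers and uses $[(\al\cdot)^d,(\be\cdot)^d] = (\al\be^t)^d$, whereas you keep $p$ general and invoke the evaluation identity $[p,(\alpha\cdot)^d] = p(\alpha)$ from Prop.~2.1(iii); this is a slight streamlining but the same underlying argument.
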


\begin{proof}
By Prop.\!\! 2.2, it suffices to prove \eqref{E:contra}
for $d$-th powers; note that $[p \circ M,q]
= [(\al M\cdot)^d,(\be\cdot)^d] = (\al M \be^t)^d = 
(\al(\be M^t)^t)^d = [(\al \cdot)^d, (\be M^t\cdot)^d] = [p, q \circ M^t]$.
\end{proof}

\begin{proposition}\cite[p.46]{Re1}
If $B$ is a blender, then so is its dual cone $B^*$.
\end{proposition}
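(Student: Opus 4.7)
The plan is to verify the three blender axioms (P1), (P2), (P3) for $B^*$. Properties (P1) and (P2) are free: the dual of any closed convex cone is itself a closed convex cone, since $B^*$ is defined as an intersection of closed half-spaces $\{v : [u,v] \ge 0\}$, one for each $u \in B$, and each such half-space is closed under addition, nonnegative scaling, and limits.

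The real content is (P3): given $q \in B^*$ and $M \in Mat_n(\rr)$, I want to show $q \circ M \in B^*$, i.e., $[p, q \circ M] \ge 0$ for every $p \in B$. The key tool is the contravariance identity from Proposition 2.6, which rewrites
\begin{equation}
[p, q\circ M] = [p\circ M^t, q].
\end{equation}
Now since $B$ is a blender, property (P3) applied to $B$ itself gives $p \circ M^t \in B$. Combined with $q \in B^*$, this forces $[p \circ M^t, q] \ge 0$, and hence $[p, q \circ M] \ge 0$ for every $p \in B$, which is exactly $q \circ M \in B^*$.

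There is no real obstacle here; the proof is essentially a one-line application of contravariance, and the main thing to be careful about is the transpose: closure of $B$ under composition with \emph{every} matrix in $Mat_n(\rr)$ (not just a distinguished subset) is precisely what makes the argument go through, because $M^t$ must also be allowed. This is why (P3) is stated with the full matrix semigroup rather than, say, only invertible matrices.
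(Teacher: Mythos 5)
Your proof is correct and is essentially identical to the paper's own argument: (P1) and (P2) come for free since duals of closed convex cones are closed convex cones, and (P3) follows from the contravariance identity $[p, q \circ M] = [p \circ M^t, q]$ together with closure of $B$ under $\circ\, M^t$. Your closing remark about why closure under the full matrix semigroup (and in particular under transposes) is exactly what the argument needs is a nice observation, though it adds nothing the paper's proof lacks.
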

\begin{proof}
The dual of a closed convex cone is a closed convex cone, so
(P1) and (P2) are automatic. Suppose $p \in B, q \in
B^*$ and $M \in Mat_n(\rr)$. Since $p\circ M^t \in
B$, we have 
\begin{equation}
[p, q\circ M] = [q \circ M , p] = [q, p\circ M^t] = [p\circ M^t,q] \ge 0,
\end{equation}
and so $q \circ M \in B^*$. This verifies (P3).
\end{proof}

For $i\in\mathcal I(n,d)$, let $D^i = \prod (\frac
{\partial}{\partial x_k})^{i_k}$; let
$f(D) = \sum c(i)a(f;i)D^i$ be the $d$-th order differential operator 
associated to $f \in F_{n,d}$. Since $\frac {\partial}{\partial x_k}$ and\
$\frac {\partial}{\partial x_\ell}$ commute,  $D^iD^j = D^{i+j} =
D^jD^i$ for any $i \in \mathcal I(n,d)$ and $j\in\mathcal I(n,e)$. By
multilinearity,  $(fg)(D) = f(D)g(D) = g(D)f(D)$ for forms $f$ and $g$
of any degree.  
 
\begin{proposition}\cite[p.183]{Re2}
If $i, j \in \mathcal I(n,d)$ and $i \neq j$, then $D^i(x^j) =
0$ and $D^i x^i = \prod_k (i_k)! = d!/c(i)$.  
\end{proposition}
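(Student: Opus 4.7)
The plan is to reduce everything to one-variable partial derivatives, exploit the product structure of $D^i$ and $x^j$, and use a pigeonhole observation that keeps the two indices from comparing coordinatewise.

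First I would write
\begin{equation*}
D^i(x^j) = \prod_{k=1}^n \frac{\partial^{i_k}}{\partial x_k^{i_k}}(x_k^{j_k}),
\end{equation*}
which is legitimate because the partial derivatives in distinct variables commute (as noted just before the statement) and distribute over the monomial factorization $x^j = \prod_k x_k^{j_k}$.

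Next I would record the elementary one-variable formula
\begin{equation*}
\frac{d^{a}}{dt^{a}}(t^{b}) = \begin{cases} \frac{b!}{(b-a)!}\, t^{b-a}, & b \ge a, \\ 0, & b < a. \end{cases}
\end{equation*}
So each factor in the product above is zero as soon as $i_k > j_k$.

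Now for the case $i \ne j$, I would use the pigeonhole-style observation that since $\sum_k i_k = \sum_k j_k = d$ and $i \ne j$, it is impossible to have $i_k \le j_k$ for every $k$; otherwise summing would force $i = j$. Hence there exists some $k_0$ with $i_{k_0} > j_{k_0}$, the $k_0$-factor vanishes, and the whole product $D^i(x^j)$ is zero. For the case $i = j$, each factor is $\frac{\partial^{i_k}}{\partial x_k^{i_k}}(x_k^{i_k}) = i_k!$, so $D^i x^i = \prod_k i_k!$. Recalling from the definition $c(i) = d!/\prod_k i_k!$, this product equals $d!/c(i)$, completing the proof.

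There is no real obstacle here; this is a direct computation. The only step worth stating explicitly is the little sum-preservation argument that yields some coordinate with $i_{k_0}>j_{k_0}$ whenever $i \ne j$ in $\mathcal I(n,d)$, since without the common degree constraint the vanishing claim would fail.
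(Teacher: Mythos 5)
Your proof is correct and follows essentially the same route as the paper: factor $D^i$ and $x^j$ coordinatewise, observe that a factor vanishes whenever $i_k > j_k$, and note that the common degree $\sum_k i_k = \sum_k j_k = d$ forces some such $k$ to exist when $i\neq j$. The paper states the last point tersely ("If $i\neq j$, then this will happen for at least one $k$"); you simply spell out the pigeonhole reasoning more explicitly.
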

\begin{proof}
We have
\begin{equation}
D^i(x^j) = \prod_{k=1}^n \biggl(\frac {\partial^{^{i_k}}}{\partial x_k^{i_k}}\biggr)
\prod_{k=1}^n x_k^{j_k} =
\prod_{k=1}^n \frac {\partial^{^{i_k}} (x_k^{j_k})}{\partial x_k^{i_k}}.
\end{equation}
If $i_k > j_k$, then the $k$-th factor above is zero. If $i \neq j$,
then this will happen for at least one $k$. Otherwise, $i=j$, and the
$k$-th factor is $i_k!$. 
\end{proof}

We now connect the inner product with differential operators.
 \begin{proposition}\cite[p.184]{Re2}  

\smallskip

\noindent (i) If $p, q \in F_{n,d}$, then  $p(D)q = q(D)p = d![p,q]$.
 
\noindent (ii) If $p, hf \in F_{n,d}$, where $f \in F_{n,k}$ and $h \in
 F_{n,d-k}$,   
then
 \begin{equation}
d![p,hf] = (d-k)![h,f(D)p].
\end{equation}
\end{proposition}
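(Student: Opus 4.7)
For part (i), the plan is to expand both sides in the monomial basis and reduce everything to Proposition 2.8. Writing $p = \sum_i c(i)a(p;i)x^i$ and $q = \sum_j c(j)a(q;j)x^j$, bilinearity gives
\begin{equation}
p(D)q = \sum_{i,j \in \mathcal I(n,d)} c(i)c(j)\,a(p;i)\,a(q;j)\,D^i(x^j).
\end{equation}
By Prop.\! 2.8, the only surviving terms are those with $i=j$, where $D^i(x^i) = d!/c(i)$. Summing yields
\begin{equation}
p(D)q = d!\sum_{i \in \mathcal I(n,d)} c(i)\,a(p;i)\,a(q;i) = d![p,q],
\end{equation}
and the same computation (or the symmetry of \eqref{E:ip}) gives $q(D)p = d![p,q]$.

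For part (ii), the idea is to combine (i) with the commutativity of differential operators noted in the paragraph preceding Prop.\! 2.8, namely $(fg)(D) = f(D)g(D)$ for forms $f, g$ of any degrees. Since $f \in F_{n,k}$ and $p \in F_{n,d}$, we have $f(D)p \in F_{n,d-k}$, which lives in the same space as $h$. Applying (i) to the pair $(h, f(D)p) \in F_{n,d-k} \times F_{n,d-k}$ gives
\begin{equation}
(d-k)![h, f(D)p] = h(D)\bigl(f(D)p\bigr) = (hf)(D)p.
\end{equation}
On the other hand, applying (i) to the pair $(p, hf) \in F_{n,d} \times F_{n,d}$ gives $(hf)(D)p = d![p, hf]$. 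Combining these two equalities yields (ii).

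There is really no hard step here: once (i) is established, (ii) follows by writing $(hf)(D) = h(D)f(D)$ and applying (i) twice with different degree decompositions. The only thing to keep track of is that $f(D)$ lowers degree by exactly $k$, so that $f(D)p$ lies in the correct space $F_{n,d-k}$ to be paired with $h$ via the inner product on that space.
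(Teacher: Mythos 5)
Your proof is correct and follows essentially the same approach as the paper: part (i) by expanding in the monomial basis and invoking Prop.\! 2.8, and part (ii) by applying (i) twice across the identity $(hf)(D) = h(D)f(D)$.
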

\begin{proof}
For (i), we have by Prop.\!\! 2.8:
\begin{equation}
\begin{gathered}
p(D)q = \sum_{i \in \mathcal I(n,d)} c(i)a(p;i)D^i \biggl(\sum_{j \in
  \mathcal I(n,d)} c(j)a(q;j)x^j\biggr) =
\\ \sum_{i \in \mathcal I(n,d)}  \sum_{j \in \mathcal I(n,d)}
c(i)c(j)a(p;i)a(q;j)D^ix^j 
 = \sum_{i \in \mathcal I(n,d)}  c(i)c(i)a(p;i)a(q;i)D^ix^i 
\\ = \sum_{i \in \mathcal I(n,d)}  c(i)^2a(p;i)a(q;i) \frac {d!}{c(i)} =
d![p,q] = d![q,p] = q(D)p.
\end{gathered}
\end{equation}
\noindent (ii) Two applications of (i) give
\begin{equation}
d![p,hf] = (hf)(D)p = h(D)f(D)p = h(D)(f(D)p) = (d-k)![h,f(D)p].
\end{equation}
\end{proof}
\begin{corollary}
If $p \in F_{n,2r}$, then $Hes(p;u,v) =  2r(2r-1)[p,(u\cdot)^{2r-2}(v\cdot)^2]$.
\end{corollary}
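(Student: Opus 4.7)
The plan is to recognize the Hessian as the result of applying a second-order differential operator to $p$ and then evaluating, and to translate both steps into statements about the inner product via Propositions 2.1 and 2.9.

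First, I would set $f = (v\cdot)^2 \in F_{n,2}$ and $h = (u\cdot)^{2r-2} \in F_{n,2r-2}$. Expanding the operator gives
\begin{equation*}
f(D) = \biggl(\sum_{i=1}^n v_i \tfrac{\partial}{\partial x_i}\biggr)^2 = \sum_{i,j} v_iv_j\, D_iD_j,
\end{equation*}
so that $(f(D)p)(x) = \sum_{i,j} v_iv_j \tfrac{\partial^2 p}{\partial x_i \partial x_j}(x)$, a form of degree $2r-2$. Evaluating at $u$ recovers exactly $Hes(p;u,v)$ by the definition in \eqref{E:hes}.

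Next, I would convert this evaluation into an inner-product statement. By Proposition 2.1(iii), applied in degree $2r-2$, evaluating any $q \in F_{n,2r-2}$ at $u$ is the same as pairing with $(u\cdot)^{2r-2}$. Taking $q = f(D)p$, this yields $Hes(p;u,v) = [h, f(D)p]$. Now apply Proposition 2.9(ii) with $d = 2r$ and $k=2$:
\begin{equation*}
(2r)!\,[p,hf] = (2r-2)!\,[h,f(D)p] = (2r-2)!\, Hes(p;u,v).
\end{equation*}
Dividing through by $(2r-2)!$ gives $Hes(p;u,v) = 2r(2r-1)\,[p, (u\cdot)^{2r-2}(v\cdot)^2]$, which is the claim.

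There is no real obstacle here: the identity is essentially a dictionary translation, with the only bookkeeping being the factorial ratio $(2r)!/(2r-2)! = 2r(2r-1)$. The one thing to double-check is that the hypotheses of Proposition 2.9(ii) are satisfied, namely that $hf$ has the same degree as $p$; since $\deg h + \deg f = (2r-2) + 2 = 2r$, this is automatic.
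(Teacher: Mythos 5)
Your proof is correct and takes essentially the same route as the paper: both apply Proposition 2.9(ii) with $h = (u\cdot)^{2r-2}$, $f = (v\cdot)^2$, $d = 2r$, $k = 2$, compute $f(D)$ explicitly, and identify $[h, f(D)p]$ with $Hes(p;u,v)$ via Proposition 2.1(iii). Your version simply spells out the factorial ratio and the degree check more explicitly than the paper does.
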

\begin{proof}
Apply Prop.\!\!  2.9 with $h = (u\cdot)^{2r-2}$, $f = (v\cdot)^2$, $d=2r$
and $k=2$. We have
\begin{equation}
f(x_1,\dots,x_n) = (v_1x_1 + \cdots + v_nx_n)^2 \implies
f(D) = \sum_{i=1}^n \sum_{j=1}^n v_iv_j
\frac{\partial^2}{\partial x_i \partial x_j},
\end{equation}
 so that $[h,f(D)p] = Hes(p;u,v)$ by \eqref{E:hes} and Prop.\!\! 2.1(iii). 
\end{proof}

\section{Convergence and duals}

We shall need some tools to prove that certain convex cones are
closed. The first one (see \cite[p.37]{Re1}) is an immediate
consequence of Prop.\!\! 2.2. 

\begin{lemma}
Suppose $S \subset \rr^n$ is bounded and has non-empty interior. Then for $i \in
\mathcal I(n,d)$ and $p \in F_{n,d}$, 
 $|a(p;i)| \le R_{n,d}(i)\cdot\sup\{|p(x)|: x \in S\}$ for some $ R_{n,d}(i)$. 
\end{lemma}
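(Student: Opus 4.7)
The plan is to combine two facts from section two. Proposition 2.1(iii) identifies point evaluation with an inner product, $[p, (\alpha\cdot)^d] = p(\alpha)$, and Proposition 2.2 says that $\{(\alpha \cdot)^d : \alpha \in S\}$ spans $F_{n,d}$ whenever $S$ has non-empty interior. Together these let me rewrite any coefficient of $p$ as a fixed linear combination of values $p(\alpha_k)$ for finitely many $\alpha_k \in S$, whose absolute values are all controlled by $\sup_{x\in S}|p(x)|$.

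Concretely, I would first invoke Prop.\ 2.2 to select $\alpha_1, \dots, \alpha_{N(n,d)} \in S$ so that $\{(\alpha_k \cdot)^d\}_{k=1}^{N(n,d)}$ is a basis of $F_{n,d}$; this is possible because any finite-dimensional spanning set contains a basis. For each $i \in \mathcal I(n,d)$ I would then expand the monomial in this basis, writing $x^i = \sum_k c_{i,k} (\alpha_k \cdot)^d$ with $c_{i,k} \in \rr$ depending on $i$ and on the chosen basis. Applying Prop.\ 2.1(ii), (iii) and bilinearity gives
\[
a(p;i) = [p, x^i] = \sum_{k=1}^{N(n,d)} c_{i,k}\, [p, (\alpha_k \cdot)^d] = \sum_{k=1}^{N(n,d)} c_{i,k}\, p(\alpha_k).
\]
Taking absolute values and bounding each $|p(\alpha_k)|$ by $\sup\{|p(x)| : x \in S\}$ — which is finite because $S$ is bounded and $p$ is continuous — yields the asserted inequality with $R_{n,d}(i) := \sum_k |c_{i,k}|$.

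There is no substantive obstacle once Prop.\ 2.2 is available; the whole argument is simply the observation that the ``dual basis'' expansion of monomials against $d$-th powers converts coefficient extraction into a uniformly bounded linear functional on the sup norm over $S$. The only subtle point worth noting is that $R_{n,d}(i)$ depends on the choice of basis points $\alpha_k \in S$, and hence implicitly on $S$ itself, even though the notation in the statement suppresses this dependence.
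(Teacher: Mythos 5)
Your argument is essentially identical to the paper's: both invoke Prop.\ 2.2 to write $x^i$ as a linear combination of $d$-th powers of points in $S$, pair with $p$ using Prop.\ 2.1, and set $R_{n,d}(i)$ to be the sum of absolute values of the expansion coefficients. The only cosmetic difference is that you fix one basis of $d$-th powers to use for every $i$, while the paper allows the points $\alpha_k$ to depend on $i$; either works.
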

\begin{proof}
Fix $i \in \mathcal I(n,d)$. By Prop.\!\!  2.2, there exist
$\la_k(i)$ and $\al_k \in S$ so that
\begin{equation}\label{E:bound}
x^i = \sum_{k=1}^{N(n,d)} \la_k(i) (\al_k\cdot)^d.
\end{equation}
Taking the inner product of \eqref{E:bound} with $p$, we find that
\begin{equation}
a(p;i) = [p,x^i] =  \sum_{k=1}^{N(n,d)} \la_k(i) [p, (\al_k\cdot)^d ] =
\sum_{k=1}^{N(n,d)} \la_k(i) p(\al_k).
\end{equation}
Now set $R_{n,d}(i) = \sum_k |\la_k(i)|$.
\end{proof}

 We define the norm on $F_{n,d}$ in the usual way, by
\begin{equation}
||p||^2 = [p,p] =  \sum_{i \in \mathcal I(n,d)|} c(i) a(p;i)^2. 
\end{equation}
Given a sequence $(p_m) \in F_{n,d}$, the statement that
$(|a(p_m;i)|)$ is uniformly bounded for all $(i,m)$ is equivalent to the
statement that $(||p_m||)$ is bounded. 

\begin{lemma}
Suppose $(p_{m,r}) \subset F_{n,d}$, $1 \le r \le N$,  and suppose 
that for all $(m,r)$, $|p_{m,r}(u)| \le M$ for  $u \in S$, where $S$
is bounded and
has non-empty interior. Then there exist $p_r \in F_{n,d}$ and $m_k\to\infty$ 
so that simultaneously for each $r$,  $p_{m_k,r} \to p_r$.
\end{lemma}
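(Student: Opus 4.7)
The plan is to combine the coefficient bound in Lemma 3.1 with the fact that $F_{n,d}$ is finite-dimensional, so that bounded sequences have convergent subsequences, and then iterate the extraction $N$ times (one for each $r$).

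First I would fix $r$ with $1 \le r \le N$ and apply Lemma 3.1 to the sequence $(p_{m,r})_m$. The hypothesis $|p_{m,r}(u)| \le M$ for $u \in S$ gives, for every $i \in \mathcal I(n,d)$,
\begin{equation}
|a(p_{m,r};i)| \le R_{n,d}(i)\cdot M,
\end{equation}
with the bound independent of $m$ (and depending on $r$ only through the common constant $M$). In particular, $(||p_{m,r}||)_m$ is bounded, so $(p_{m,r})_m$ lies in a compact subset of $F_{n,d} \approx \rr^{N(n,d)}$ by the Bolzano–Weierstrass theorem.

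Next I would perform a standard successive-extraction argument. Since $(p_{m,1})_m$ is bounded, extract a subsequence $(m_k^{(1)})$ so that $p_{m_k^{(1)},1} \to p_1$ for some $p_1 \in F_{n,d}$. The sequence $(p_{m_k^{(1)},2})_k$ is still bounded, so extract a further subsequence $(m_k^{(2)}) \subseteq (m_k^{(1)})$ with $p_{m_k^{(2)},2} \to p_2$; along this subsequence we still have $p_{m_k^{(2)},1} \to p_1$. Because there are only finitely many values $r = 1, \dots, N$, iterating this $N$ times produces a single subsequence $m_k := m_k^{(N)}$ along which $p_{m_k,r} \to p_r$ simultaneously for every $r$.

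There is no real obstacle here beyond bookkeeping: the work is done by Lemma 3.1, which converts the pointwise bound on $S$ into a coefficient bound, after which finite-dimensionality and the finiteness of the index set $\{1,\dots,N\}$ handle the rest. (Alternatively one could run a single diagonal extraction, but the iterated version is cleaner since $N$ is finite.)
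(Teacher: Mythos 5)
Your proof is correct and rests on exactly the same ingredients as the paper's: Lemma 3.1 converts the uniform bound on $S$ into a uniform coefficient bound, and then Bolzano--Weierstrass in the finite-dimensional space $F_{n,d}$ finishes it. The only cosmetic difference is packaging: you extract subsequences iteratively over $r=1,\dots,N$, whereas the paper concatenates the $N$ coefficient vectors into a single vector in $\rr^{N\cdot N(n,d)}$ and applies Bolzano--Weierstrass once; the two are equivalent since $N$ is finite.
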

\begin{proof}
Identify each $p_{m,r}$ with the vector $(a(p_{m,r};i)) \in
\rr^{N(n,d)}$; these vectors are uniformly bounded by Lemma 3.1.
Concatenate them to form a vector $v_m \in \rr^{N*N(n,d)}$. 
By Bolzano-Weierstrass, there is  a convergent
subsequence $(v_{m_k})$. The corresponding subsequences of forms are then
convergent. 
\end{proof}

Even when $(p_m)$ is unbounded, one can still find an interesting subsequence.
\begin{lemma}
Suppose $(p_m) \subset F_{n,d}$ and $||p_m||$ is unbounded. Then
there exists a subsequence $p_{m_k}$ and $\tau_k \to
\infty$ so that $\tau_k^{-1}p_{m_k} \to p$, where $p \neq 0$.
\end{lemma}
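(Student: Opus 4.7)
The plan is to normalize by the norm itself. Since $\|p_m\|$ is unbounded, I first pass to a subsequence $(p_{m_k})$ along which $\|p_{m_k}\| \to \infty$; such a subsequence exists by the very definition of unboundedness (for each $k$, choose $m_k$ so that $\|p_{m_k}\| > k$, and take $m_k$ strictly increasing).

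Next, set $\tau_k = \|p_{m_k}\|$ and $q_k = \tau_k^{-1} p_{m_k}$. Then $\tau_k \to \infty$ and $\|q_k\| = 1$ for every $k$. Because $F_{n,d} \cong \rr^{N(n,d)}$ is finite-dimensional, the unit sphere $\{q \in F_{n,d} : \|q\| = 1\}$ is compact (this is where Bolzano-Weierstrass applies, just as in Lemma 3.2: the coordinates $c(i)^{1/2} a(q_k;i)$ are bounded by $1$, so some subsequence converges coordinatewise). Extract a subsequence, which for convenience we still index by $k$, so that $q_k \to p$ in $F_{n,d}$.

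Finally, by continuity of the norm (which follows from Prop.\! 2.1(iv)), $\|p\| = \lim_k \|q_k\| = 1$, so in particular $p \neq 0$. Thus $\tau_k^{-1} p_{m_k} \to p$ with $p \neq 0$, as required. There is really no obstacle here: the content is entirely the compactness of the unit sphere in a finite-dimensional normed space combined with the harmless observation that dividing by $\|p_{m_k}\|$ produces a bounded sequence without killing it.
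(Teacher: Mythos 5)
Your proof is correct and amounts to the same underlying strategy as the paper's — normalize to a bounded sequence, invoke finite-dimensional compactness (Bolzano--Weierstrass), and show the limit is nonzero — but with a cleaner choice of normalizer. The paper divides by $\mu_m = \max_i|a(p_m;i)|$, and then needs an extra pigeonhole step (since $\mathcal I(n,d)$ is finite, some index $i_0$ satisfies $a(\bar p_{m_k};i_0) = \pm 1$ along a further subsequence) together with a sign adjustment, in order to exhibit a specific coefficient of the limit that equals $1$. You divide by $\tau_k = \|p_{m_k}\|$, which puts every $q_k$ on the unit sphere; compactness of the sphere gives a convergent subsequence, and continuity of the norm immediately gives $\|p\|=1$, so $p\neq 0$ with no pigeonhole and no sign bookkeeping. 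The one small imprecision is the citation of Prop.\! 2.1(iv), which asserts continuity of $q\mapsto[p_m,q]$ in one slot, not continuity of $p\mapsto\|p\|$; but the latter is just continuity of a norm on $\rr^{N(n,d)}$ and needs no citation (or can be derived from Cauchy--Schwarz plus the boundedness $\|q_k\|=1$). Either proof is fine; yours is marginally shorter, while the paper's has the (irrelevant here) virtue of producing a concrete nonzero coefficient of the limit.
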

\begin{proof}
Let $\mu_m = \max\{|a(p_m;i)|\}$; by hypothesis, $(\mu_m)$ is unbounded.
Take a subsequence on which $\mu_m\to \infty$ and drop the subscripts. Let
$\bar p_m = \mu_m^{-1}p_m$. Then each $\bar p_m$ has at least one
coefficient $a(\bar p_m;i(m)) = \pm 1$. Since $\mathcal I(n,d)$ is finite,
there exists $i_0$ so that there is a subsequence on which $a(\bar
p_{m_k};i_0) = \pm 1$. Taking $-p_{m_k}$ if necessary and dropping the
subscripts, we have $a(\bar p_m;i_0) = 1$ and  $|a(\bar p_m;i)|
\le 1$ for all $(m,i)$. By Lemma 3.2, $(\bar p_m)$ has a convergent subsequence
$\bar p_{m_k} \to p$, and $a(p;i_0) = 1$, so $p \neq 0$.
 Since $\bar p_{m_k} = \mu_{m_k}^{-1} p_{m_k}$, this is the desired
 subsequence.  
\end{proof}

We state without proof a direct implementation of 
Carath\'eodory's Theorem (see 
e.g. \cite[p.27]{Re1}.). It is worth noting that in 1888 (when
Carath\'eodory was 15), Hilbert \cite{Hi} used this
argument with $N(3,6) = 28$ to show that $\Sigma_{3,6}$ is 
closed.
\begin{proposition}[Carath\'eodory's Theorem]
If $r > N(n,d)$, and $h_k \in  F_{n,d}$, then there exist $\la_k
\ge 0$ so that 
\begin{equation}
\sum_{k=1}^r h_k = \sum_{k=1}^{N(n,d)} \la_k h_{n_k}. 
\end{equation} 
\end{proposition}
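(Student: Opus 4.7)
The plan is to reduce to the standard linear-algebraic proof of Carathéodory for conic combinations, exploiting the identification $F_{n,d} \cong \rr^{N(n,d)}$ coming from the coefficient vector $(a(p;i))_{i \in \mathcal I(n,d)}$.

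First I would set $g = \sum_{k=1}^r h_k$ and, more generally, consider expressions of the form $g = \sum_{k=1}^r \mu_k h_k$ with $\mu_k \ge 0$; initially $\mu_k = 1$. The goal is to show that whenever strictly more than $N(n,d)$ of the $\mu_k$ are nonzero, we can produce a new expression of the same form with strictly fewer nonzero coefficients. Iterating this finitely many times terminates in a representation with at most $N(n,d)$ nonzero $\mu_k$, which after renaming is the assertion of the proposition.

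For the reduction step, suppose that among $h_{k_1}, \ldots, h_{k_s}$ (the indices where $\mu_k > 0$) we have $s > N(n,d)$. Since $F_{n,d}$ is an $N(n,d)$-dimensional vector space, these $s$ forms must be linearly dependent, so there exist real numbers $c_{k_1}, \ldots, c_{k_s}$, not all zero, with $\sum_j c_{k_j} h_{k_j} = 0$. By negating if necessary we may assume at least one $c_{k_j} > 0$. For any $t \in \rr$ the equality $g = \sum_j (\mu_{k_j} - t c_{k_j}) h_{k_j}$ still holds. Choose
\begin{equation}
t^* = \min\left\{ \frac{\mu_{k_j}}{c_{k_j}} : c_{k_j} > 0 \right\},
\end{equation}
which is attained, say, at index $j_0$. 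Then $\mu_{k_j} - t^* c_{k_j} \ge 0$ for every $j$ (by choice of $t^*$ on positive $c_{k_j}$, and automatic on non-positive $c_{k_j}$ since $\mu_{k_j} \ge 0$), and the new coefficient at $j_0$ vanishes. This yields an expression of $g$ as a non-negative combination of strictly fewer $h_k$'s.

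The main (minor) obstacle is just being careful that the elimination procedure actually makes progress and that the non-negativity is preserved throughout the choice of $t^*$; once these bookkeeping points are clear the result follows by induction on the number of nonzero coefficients. No genuine difficulty arises beyond the standard Carathéodory-for-cones argument, so this essentially just transcribes that argument into the ambient space $F_{n,d} \cong \rr^{N(n,d)}$.
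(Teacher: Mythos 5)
Your argument is correct and is exactly the standard proof of Carath\'eodory's theorem for conic combinations, transported to $F_{n,d} \cong \rr^{N(n,d)}$ via the coefficient identification. The paper in fact states this proposition without proof (citing \cite[p.27]{Re1}), so there is no in-paper argument to compare against; your write-up supplies precisely the expected dependence-relation elimination, and the bookkeeping (choice of $t^*$, preservation of non-negativity, strict decrease in the number of nonzero coefficients) is handled correctly.
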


We use these lemmas to show that if $B_1$ and $B_2$ are
blenders, then so are  $B_1+B_2$ (c.f. \eqref{E:b1+b2}) and $B_1*B_2$ 
(c.f. \eqref{E:b1*b2}). We may assume $B_i \neq 0$.
\begin{theorem}
\ 

\smallskip

\noindent (i) If $B_i \in \mathcal B_{n,2r}$, then $B_1 + B_2 \in
\mathcal B_{n,2r}$.

\noindent (ii) If $B_i \in \mathcal B_{n,2r_i}$ and $r=r_1+r_2$, 
then $B_1*B_2 \in \mathcal B_{n,2r}$. 
\end{theorem}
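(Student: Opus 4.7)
Conditions (P1) and (P3) are routine for both $B_1+B_2$ and $B_1*B_2$ directly from the definitions: sums and products distribute, $(p_1+p_2)\circ M = p_1\circ M + p_2\circ M$, and $(\sum p_{1,k}p_{2,k})\circ M = \sum(p_{1,k}\circ M)(p_{2,k}\circ M)$. The substantive issue is (P2), the closure under limits, which the plan focuses on. I may assume each $B_i \ne \{0\}, F_{n,2r_i}$ (the trivial cases are immediate). By Prop.\! 2.5, after possibly replacing $B_i$ by $-B_i$ (also a blender), I may further assume $B_i \subseteq P_{n,2r_i}$. In (i), the ``mixed-sign'' case ($B_1\subseteq P_{n,2r}$ but $B_2\subseteq -P_{n,2r}$) is trivial: Prop.\! 2.5 gives $B_i \supseteq \pm Q_{n,2r}$, so $B_1+B_2 \supseteq Q_{n,2r}-Q_{n,2r}$, which by Prop.\! 2.2 equals all of $F_{n,2r}$, a closed blender. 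In (ii), the mixed-sign case reduces to the same-sign case via $B_1*B_2 = -[B_1*(-B_2)]$.

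For (i) in the same-sign case, suppose $p_m = p_{m,1}+p_{m,2} \to p$ with $p_{m,i} \in B_i \subseteq P_{n,2r}$. Coefficientwise convergence yields $|p_m(u)| \le M$ on the unit ball $S$ for some $M$, so by psd-ness $0 \le p_{m,i}(u) \le p_m(u) \le M$ on $S$. Lemma 3.1 then uniformly bounds the coefficients of each $p_{m,i}$, and Lemma 3.2 extracts a simultaneous subsequence $p_{m_k,i} \to \tilde p_i$, with $\tilde p_i \in B_i$ by closedness; hence $p = \tilde p_1+\tilde p_2 \in B_1+B_2$.

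For (ii), apply Prop.\! 3.4 (Carath\'eodory) to write $f_m \in B_1*B_2$ as $\sum_{k=1}^{N} p_{1,k,m}p_{2,k,m}$ with $N = N(n,2r)$ and psd $p_{i,k,m} \in B_i$. Each summand is psd and dominated pointwise by $f_m$, hence uniformly bounded on bounded sets. Lemma 3.1 accordingly bounds the coefficients of each product $p_{1,k,m}p_{2,k,m}$ uniformly in $m$, and a diagonal subsequence via Lemma 3.2 produces $p_{1,k,m_l}p_{2,k,m_l} \to g_k$ with $\sum g_k = f$. The crux---recovering each $g_k$ as a product $\tilde p_{1,k}\tilde p_{2,k}$ with $\tilde p_{i,k} \in B_i$---proceeds by rescaling $(p_{1,k,m}, p_{2,k,m}) \mapsto (\la p_{1,k,m}, \la^{-1}p_{2,k,m})$ (which preserves the product and the cone memberships) to arrange $\rho_{k,m} := \|p_{1,k,m}\| = \|p_{2,k,m}\|$. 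The key lemma is a uniform lower bound: the multiplication map $(\bar p,\bar q) \mapsto \bar p\bar q$ is continuous and nowhere zero (the polynomial ring $\rr[x_1,\dots,x_n]$ is an integral domain, and unit norm forbids the zero polynomial) on the compact set $\{\bar p \in B_1 : \|\bar p\|=1\} \times \{\bar q \in B_2 : \|\bar q\|=1\}$, so $\|\bar p\bar q\| \ge c > 0$ uniformly there. Consequently $\|p_{1,k,m}p_{2,k,m}\| \ge c\rho_{k,m}^2$ is bounded, forcing $\rho_{k,m}$ bounded, after which Lemma 3.2 supplies convergent subsequences $p_{i,k,m_l} \to \tilde p_{i,k} \in B_i$ of the factors themselves. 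This yields $g_k = \tilde p_{1,k}\tilde p_{2,k}$ and $f = \sum_k \tilde p_{1,k}\tilde p_{2,k} \in B_1*B_2$. I expect this uniform lower bound (compactness plus the integral-domain property) to be the only nontrivial ingredient---everything else is careful bookkeeping with Carath\'eodory and Lemmas 3.1--3.3.
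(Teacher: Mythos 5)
Your proof is correct and handles (i) the same way as the paper, but your treatment of (P2) for $B_1*B_2$ in (ii) is a genuinely different argument. The paper fixes a point $v$ with $p_k(v)=1$ (after disposing of $p_k=0$), rescales the factors so that $q_{i,k,m}(v)=1$, and then rules out an unbounded factor sequence by invoking Lemma 3.3 to renormalize it to a nonzero limit $\bar q_{1,k}$, which together with $q_{2,k}\neq 0$ gives $\bar q_{1,k}q_{2,k}=0$, contradicting that $\rr[x]$ is an integral domain. You instead rescale so that the two factors have equal norm $\rho_{k,m}$, and prove directly that the bilinear multiplication map has a uniform positive lower bound $c$ on the compact set of pairs of unit-norm elements of $B_1\times B_2$ (compactness plus the same integral-domain fact), whence $\|p_{1,k,m}p_{2,k,m}\|\ge c\rho_{k,m}^2$ forces boundedness of $\rho_{k,m}$ and both factor sequences at once. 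Both routes rest on $\rr[x]$ having no zero divisors; yours packages that fact into a cleaner \emph{a priori} estimate and thereby bypasses Lemma 3.3 entirely, while the paper's version is more elementary in that it avoids the compactness argument and needs only the pointwise evaluation normalization. Two small points you might tidy: if $p_{i,k,m}=0$ for some $(k,m)$ you cannot normalize by $\rho_{k,m}$ — but then the product is $0$, and either pass to a subsequence where this persists (so $g_k=0=0\cdot 0$) or note that eventually it does not; and for the mixed-sign reduction in (ii), your identity $B_1*B_2=-[B_1*(-B_2)]$ is correct and makes explicit a step the paper leaves implicit.
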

\begin{proof}
In each case, (P1) is automatic, and
since $(p_1+p_2) \circ M = p_1 \circ M + p_2 \circ M$ and $(p_1p_2)
\circ M = (p_1 \circ M)( p_2 \circ M)$, (P3) is verified. The issue is
(P2). 

Suppose $B_i \in \mathcal B_{n,2r}$ have opposite ``sign'', say
$B_1 \subset P_{n,2r}$ and $B_2 \subset -P_{n,2r}$. Then Prop.\!\! 2.4
implies that $B_1 + B_2 = F_{n,2r}$. Otherwise, we may assume 
that $B_i \subset P_{n,2r_i}$.
Suppose $p_{i,m} \in B_i$ and $p_{1,m} + p_{2,m} = p_m\to
p$.  Let $S$ be the unit ball in $\rr^n$.
If $\sup\{p(u) : u \in S\} = T$, then for $m \ge m_0$,
  $\sup\{p_m(u) : u \in S\} \le T+1$, and since $p_{i,m}$ is psd, it
follows that   $\sup\{p_{i,m}(u) : u \in S\} \le T+1$ as well. By
  Lemma 3.2, there is a common subsequence so that $p_{i,m_k} \to p_i
  \in B_i$, hence $p = \lim p_{m_k} = p_1+p_2 \in B_1 + B_2$.

Suppose now   $B_i \in \mathcal B_{n,2r_i}$, and by taking $\pm B_i$,
assume $B_i \subset P_{n,2r_i}$.
By Prop.\!\!  3.4, a sum such as \eqref{E:b1*b2} can be compressed
into one in which 
$s \le N(n,2r)$. Write
\begin{equation}
p_m =  \sum_{k=1}^{N(n,2r)}  p_{1,k,m}p_{2,k,m},\qquad p_{i,k,m} \in B_i,
\end{equation}
and suppose $p_m \to p$.
As above, since $p$ is bounded on $S$, so is the sequence $(p_m)$,
and since each $p_{i,k,m}$ is psd, it follows that the
sequence $(p_{1,k,m}p_{2,k,m})$ is bounded on $S$, and hence by
Lemma 3.2,  a subsequence of  $(p_{1,k,m}p_{2,k,m}) \to p_k$ for some $p_k \in
P_{n,2r}$.  We need to show that $p_k$ can be written as a product
$q_{1,k}q_{2,k}$, where $q_{i,k} \in B_i$. 
A complication is that the given sequence of factors might not both
converge (e.g.\!\! if $p_{1,k,m} = m q_{1,k}$ and $p_{2,k,m} = m^{-1}
q_{2,k}$), so we need to normalize.

First observe that if $p_k=0$, we are done. Otherwise, choose $v \in \rr^n$
 so that $p_k(v) =1$. Since $p_{1,k,m}(v)p_{2,k,m}(v) \to 1$, 
$p_{1,k,m}(v)p_{2,k,m}(v) > 0$ for $m \ge m_0$. Drop the first
$m_0$ terms and define  
\begin{equation}
q_{1,k,m}(x) = \frac {p_{1,k,m}(x)}{p_{1,k,m}(v)} \in B_1, \qquad
q_{2,k,m}(x) = \frac {p_{2,k,m}(x)}{p_{2,k,m}(v)} \in B_2.
\end{equation}
Then $(q_{1,k,m}q_{2,k,m}) \to p_k$ and $q_{i,k,m}(v) = 1$. 

If each $(||q_{i,k,m}||)$ is bounded, then by
Lemma 3.2, there are convergent subsequences $q_{i,k,m} \to q_{i,k} \in
B_i$ and $p_k = q_{1,k}q_{2,k}$ as desired. 

Suppose $(||q_{1,k,m}||)$ is unbounded and  $(||q_{2,k,m}||)$ is
bounded.  Taking the common convergent subsequences from
Lemmas 3.2 and 3.3, and dropping subscripts, we have $\tau_m \to
\infty$ and  $q_{1,k,m} = \tau_m \bar q_{1,k,m}$ so that $\bar
q_{1,k,m} \to \bar q_{1,k} \in B_1$ (where $\bar q_{1,k} \neq 0$) and $q_{2,k,m} \to 
q_{2,k} \in B_2$, where  $q_{2,k}(v) = \lim q_{2,k,m}(v) = 1$,
so $q_{2,k} \neq 0$. But now
\begin{equation}
0 = \lim_{m \to \infty} \tau_m^{-1} q_{1,k,m}q_{2,k,m} = \lim_{m \to \infty} \bar
q_{1,k,m}q_{2,k,m} = \bar q_1q_2,
\end{equation}
a contradiction. If both $(||q_{i,k,m}||)$'s are unbounded, we
can write  $q_{2,k,m} = \nu_m \bar q_{2,k,m}$ with $\nu_m \to
\infty$ and $\bar q_{2,k,m} \to \bar q_{2,k}\neq 0$ and derive a similar
contradiction.  It follows that the first case holds for each $k$ and
so $B_1*B_2$ satisfies (P2).
\end{proof}
The following theorem was announced in \cite[p.47]{Re1}, but the proof
was not given.
\begin{theorem}
If $uv = r$, then $W_{n,(u,2v)}$ is a blender.
\end{theorem}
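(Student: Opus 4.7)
Property (P1) is immediate: concatenating two lists of $2v$-th powers of degree-$u$ forms gives another such list. Property (P3) is also routine, since $h^{2v}\circ M = (h\circ M)^{2v}$ and $h\circ M \in F_{n,u}$ whenever $h \in F_{n,u}$. The content of the theorem is the closedness axiom (P2), and the strategy mirrors the proof of Theorem 3.5(ii): compress via Carath\'eodory, bound the individual summands using positive semidefiniteness, and then extract a convergent subsequence of the underlying factors.

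Suppose $p_m \in W_{n,(u,2v)}$ and $p_m \to p$ coefficientwise. By Carath\'eodory's Theorem (Prop.\ 3.4), we may write
\[
p_m = \sum_{k=1}^{N(n,2r)} h_{k,m}^{2v}, \qquad h_{k,m}\in F_{n,u},
\]
with a fixed number $N(n,2r)$ of summands (appending zero forms if necessary). Let $S$ be the closed unit ball of $\rr^n$. Coefficient convergence implies convergence in the sup norm on $S$, so there is a constant $T$ with $|p_m(u)|\le T$ for all $m$ and $u\in S$. Because every summand $h_{k,m}^{2v}$ is pointwise nonnegative, $0 \le h_{k,m}^{2v}(u) \le p_m(u) \le T$ on $S$. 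Taking $2v$-th roots, $|h_{k,m}(u)| \le T^{1/(2v)}$ on $S$, uniformly in $(k,m)$.

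Now apply Lemma 3.2 to the doubly-indexed family $\{h_{k,m}\} \subset F_{n,u}$: a single subsequence $m_\ell \to \infty$ delivers forms $h_k \in F_{n,u}$ with $h_{k,m_\ell} \to h_k$ for each $k=1,\dots,N(n,2r)$ simultaneously. Since polynomial multiplication is jointly continuous in the coefficient topology, $h_{k,m_\ell}^{2v} \to h_k^{2v}$, and summing gives
\[
p = \lim_{\ell\to\infty} p_{m_\ell} = \sum_{k=1}^{N(n,2r)} h_k^{2v} \in W_{n,(u,2v)},
\]
which is (P2).

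The main obstacle is the extraction of convergent subsequences of the \emph{factors} $h_{k,m}$ from the convergence of the sum of their $2v$-th powers. Two features of $W_{n,(u,2v)}$ make this work: Carath\'eodory caps the number of summands, so no single term can be split into arbitrarily many pieces in the limit, and the evenness of $2v$ both makes each summand psd (so the sum dominates the individual summands) and allows one to recover sup bounds on $|h_{k,m}|$ from sup bounds on $h_{k,m}^{2v}$. If either feature failed, the argument would collapse, consistent with Prop.\ 2.5.
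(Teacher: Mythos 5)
Your proof is correct and follows essentially the same route as the paper's: compress to $N(n,2r)$ summands via Carath\'eodory, use positivity of the $2v$-th powers to get a uniform sup bound on $|h_{k,m}|$ over the unit ball, apply Lemma~3.2 to extract a simultaneous convergent subsequence of the factors, and pass to the limit. Nothing to add.
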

\begin{proof}
As we have already seen, (P1) and (P3) are immediate. Suppose $p_m \in
W_{n,(u,2v)}$ and $p_m \to p$.  Prop.\!\! 3.4 says that we can write
\begin{equation}
p_m = \sum_{k=1}^{N(n,2r)}h_{k,m}^{2v}, \qquad h_{k,m} \in F_{n,u}.
\end{equation}
As before, $p$ (and so $(p_m)$)  is bounded on $S$, and the
summands are psd so   $(h_{k,m}^{2v})$ and thus also $(|h_{k,m}|) =
((h_{k,m}^{2v})^{1/(2v)})$ are bounded  on $S$.  Taking a convergent
subsequence, suppose $(h_{k,m}) \to
h_k$. Then $(h_{k,m}^{2v}) \to h_k^{2v}$. Taking a common subsequence
for each of the $N(n,2r)$ summands, we see that $p \in W_{n,(u,2v)}$.
\end{proof}

In particular, $\Sigma_{n,2r}$ and $Q_{n,2r}$ are blenders; see \cite[p.46]{Re1}.
\begin{theorem}
If $\sum_iu_iv_i = 2r$, then
$W_{n,\{(u_1,2v_1),...., (u_m,2v_m)\}} \in \mathcal B_{n,2r}$.
\end{theorem}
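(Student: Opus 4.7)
The plan is to proceed by induction on $m$, reducing the general Waring blender to iterated $*$-products of the single-type Waring blenders. The base case $m=1$ is Theorem 3.6. For the inductive step, I would prove the set identity
\[
W_{n,\{(u_1,2v_1),\dots,(u_m,2v_m)\}} = W_{n,(u_1,2v_1)} * W_{n,\{(u_2,2v_2),\dots,(u_m,2v_m)\}},
\]
and then invoke Theorem 3.6 (the right-hand factor on the RHS is, by inductive hypothesis, a blender in $\mathcal B_{n,2(r-u_1v_1)}$) together with Theorem 3.5(ii) (the $*$-product of two blenders is again a blender). Since $2u_1v_1 + 2(r - u_1v_1) = 2r$, the degrees match, and the conclusion follows immediately.

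The set equality is mostly bookkeeping. For $\subseteq$, any generator $h_{k,1}^{2v_1}\cdots h_{k,m}^{2v_m}$ of the left-hand side is the single product
\[
h_{k,1}^{2v_1} \cdot \bigl(h_{k,2}^{2v_2}\cdots h_{k,m}^{2v_m}\bigr),
\]
whose first factor is a $2v_1$-th power in $W_{n,(u_1,2v_1)}$ and whose second factor is a one-term element of $W_{n,\{(u_2,2v_2),\dots,(u_m,2v_m)\}}$. Summing such generators exhibits any element of the left-hand side as an element of the $*$-product. For $\supseteq$, an arbitrary element of the right-hand side has the form $\sum_k f_k g_k$ with $f_k \in W_{n,(u_1,2v_1)}$ and $g_k \in W_{n,\{(u_2,2v_2),\dots,(u_m,2v_m)\}}$. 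Writing out the defining representations
\[
f_k = \sum_{a} h_{k,a,1}^{2v_1}, \qquad g_k = \sum_{b} h_{k,b,2}^{2v_2}\cdots h_{k,b,m}^{2v_m},
\]
and distributing yields
\[
f_k g_k = \sum_{a,b} h_{k,a,1}^{2v_1}\, h_{k,b,2}^{2v_2}\cdots h_{k,b,m}^{2v_m},
\]
which is manifestly of the form required on the left-hand side.

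I do not anticipate a serious obstacle: the only genuinely delicate point for compound Waring blenders is the closure condition (P2), and that has already been handled in the proof of Theorem 3.5(ii) via the normalization-and-subsequence argument. Conditions (P1) and (P3) are transparent from the recursive description, and the set identity above is just a distributive computation. Thus the theorem reduces cleanly to the two prior results, and the induction goes through without further technical work.
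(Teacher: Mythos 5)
Your proof is correct and takes essentially the same approach as the paper, which simply observes that $W_{n,\{(u_1,2v_1),\dots,(u_m,2v_m)\}} = W_{n,(u_1,2v_1)} * \cdots * W_{n,(u_m,2v_m)}$ and invokes Theorems 3.5(ii) and 3.6. Your induction on $m$ with the explicit distributive verification of the set identity is just a more detailed write-up of the same reduction.
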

\begin{proof}
Note that $W_{n,\{(u_1,2v_1),...., (u_m,2v_m)\}} = W_{n,(u_1,2v_1)} * \cdots
*  W_{n,(u_m,2v_m)}$.
\end{proof}
 
\begin{proposition}\cite[p.38]{Re1}
$P_{n,2r}$ and $Q_{n,2r}$ are dual blenders.
\end{proposition}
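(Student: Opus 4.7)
The plan is to establish the two containments $Q_{n,2r} \subseteq P_{n,2r}^*$ and $P_{n,2r} \subseteq Q_{n,2r}^*$ directly from the reproducing property of the inner product, and then invoke biduality to conclude. The key tool is Prop.\! 2.1(iii), which gives $[p,(\al\cdot)^d] = p(\al)$, together with the fact (established in Theorem 3.6) that $Q_{n,2r}$ is a closed convex cone, so that $Q_{n,2r}^{**} = Q_{n,2r}$.

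First I would show $Q_{n,2r} \subseteq P_{n,2r}^*$. If $p \in P_{n,2r}$ and $q = \sum_{k=1}^s (\al_k\cdot)^{2r} \in Q_{n,2r}$, then by bilinearity and Prop.\! 2.1(iii),
\begin{equation}
[p,q] = \sum_{k=1}^s [p,(\al_k\cdot)^{2r}] = \sum_{k=1}^s p(\al_k) \ge 0,
\end{equation}
since each $p(\al_k) \ge 0$. Hence $q \in P_{n,2r}^*$.

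Next I would show the reverse-direction inclusion $Q_{n,2r}^* \subseteq P_{n,2r}$. If $q \in Q_{n,2r}^*$, then since $(\al\cdot)^{2r} \in Q_{n,2r}$ for every $\al \in \rr^n$, we get
\begin{equation}
q(\al) = [q,(\al\cdot)^{2r}] \ge 0 \quad \text{for all}\quad \al \in \rr^n,
\end{equation}
which says exactly that $q \in P_{n,2r}$. So $Q_{n,2r}^* \subseteq P_{n,2r}$.

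Finally I would assemble the two inclusions via biduality. Taking duals of $Q_{n,2r} \subseteq P_{n,2r}^*$ reverses the inclusion: $P_{n,2r} = (P_{n,2r}^*)^* \subseteq Q_{n,2r}^*$. Combined with the second step, this yields $Q_{n,2r}^* = P_{n,2r}$, and dualizing once more using that $Q_{n,2r}$ is closed (Theorem 3.6) gives $P_{n,2r}^* = Q_{n,2r}^{**} = Q_{n,2r}$. The only nontrivial ingredient beyond the inner product identity is the closure of $Q_{n,2r}$, which is the one step that requires real work; fortunately that has already been handled in the preceding subsection, so no additional obstacle remains here.
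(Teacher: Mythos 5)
Your proof is correct and takes essentially the same route as the paper: compute $[p,q]$ for $p\in P_{n,2r}$ and $q$ a sum of $2r$-th powers using $[p,(\al\cdot)^{2r}]=p(\al)$, deduce $Q_{n,2r}^*=P_{n,2r}$, and dualize. The paper phrases it as a single chain of equivalences ($p\in Q_{n,2r}^*$ iff $\sum\la_k p(\al_k)\ge 0$ for all $\la_k\ge 0,\ \al_k$ iff $p\in P_{n,2r}$) and leaves the closure of $Q_{n,2r}$ (needed for $Q_{n,2r}^{**}=Q_{n,2r}$) implicit, whereas you spell out that step; otherwise the arguments coincide.
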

\begin{proof}
We have $p \in Q_{n,2r}^*$ if and only if $p \in F_{n,2r}$ and, whenever $\la_k
\ge 0$ and $\al_k \in \rr^n$, 
\begin{equation}
0 \le \left[ p, \sum_{k=1}^r \la_k (\al_k\cdot)^{2r} \right] =
\sum_{k=1}^r \la_k p(\al_k).
\end{equation}
This is true iff $p(\al) \ge 0$ for all $\al \in \rr^n$;
that is, iff $p \in P_{n,2r}$.
\end{proof}
It was a commonplace by the time of \cite{Hi} that
$P_{n,2r} = \Sigma_{n,2r}$ when $n=2$ or $2r=2$.  Hilbert proved there that
$P_{3,4} = \Sigma_{3,4}$ and that strict inclusion is true for other
$(n,2r)$ (see \cite{Re3}.) 
We say that $p \in P_{n,2r}$ is 
{\it positive definite} or {\it pd} if $p(u) = 0$ only for $u=0$.
It follows that $p \in int(P_{n,2r})$ if and only if $p$ is pd.

Blenders are cousins of orbitopes. An {\it orbitope}
is the convex hull of an orbit of a compact algebraic group $G$ acting
linearly on a real vector space; see  \cite[p.1]{SSS}. The key
differences from blenders are that it is a single orbit, and that $G$ is
compact. One object which is both a blender and an orbitope is
$Q_{n,2r}$, which  is named $\mathcal V_{n,2r}$  (and called the {\it 
  Veronese orbitope}) in \cite{SSS}. 

The duals of the Waring blenders can be explicitly given.
\begin{proposition}\cite[p.47]{Re1}
Given $p \in F_{n,2uv}$, define the form $H_p(t) \in F_{N(n,u),2v}$, in
variables $\{t(\ell)\}$ indexed by $\{\ell \in \mathcal I(n,u)\}$, by
\begin{equation}\label{E:wardual}
H_p(\{t(\ell_j)\}) =  \sum_{\ell_1 \in \mathcal I(n,u)}\cdots
\sum_{\ell_{2v}  \in \mathcal I(n,u)} a(p;\ell_1 + \cdots + \ell_{2v})t(\ell_1)\cdots
t(\ell_{2v}). 
\end{equation}
Then $p \in W_{n,(u,2v)}^*$ if and only if $H_p \in P_{N(n,u),2v}$.
\end{proposition}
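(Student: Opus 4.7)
The plan is to reduce the dual cone condition to a pointwise nonnegativity statement for $H_p$, via a direct expansion of $[p,h^{2v}]$. Since $W_{n,(u,2v)}$ is by definition the conic hull of the pure powers $\{h^{2v} : h \in F_{n,u}\}$, and every element of $W_{n,(u,2v)}$ is already a finite sum of such powers (so no closure is needed in the description of the generators), we immediately get
\[
p \in W_{n,(u,2v)}^* \iff [p,h^{2v}] \ge 0 \text{ for every } h \in F_{n,u}.
\]

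The second step is a bookkeeping computation of $[p,h^{2v}]$. Writing $h = \sum_{\ell \in \mathcal I(n,u)} c(\ell) a(h;\ell)x^\ell$ and expanding $h^{2v}$ monomially, the coefficient of $x^i$ in $h^{2v}$ is
\[
c(i)\,a(h^{2v};i) = \sum_{\ell_1+\cdots+\ell_{2v}=i}\ \prod_{j=1}^{2v} c(\ell_j)\,a(h;\ell_j).
\]
Substituting into $[p,h^{2v}] = \sum_i c(i)a(p;i)a(h^{2v};i)$ and regrouping by the $(2v)$-tuples $(\ell_1,\dots,\ell_{2v})$ yields
\[
[p,h^{2v}] = \sum_{\ell_1,\dots,\ell_{2v}\in\mathcal I(n,u)} a(p;\ell_1+\cdots+\ell_{2v})\,\prod_{j=1}^{2v} c(\ell_j)\,a(h;\ell_j),
\]
which, compared with \eqref{E:wardual}, is precisely $H_p$ evaluated at the tuple $t(\ell) := c(\ell)a(h;\ell)$, i.e.\! at the ordinary monomial coefficients of $h$.

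The final step is that the linear map $F_{n,u} \to \rr^{N(n,u)}$ sending $h$ to its coefficient tuple $(c(\ell)a(h;\ell))_{\ell\in\mathcal I(n,u)}$ is a bijection. Hence as $h$ ranges over $F_{n,u}$, the argument $t$ ranges over all of $\rr^{N(n,u)}$, and so $[p,h^{2v}] \ge 0$ for every $h \in F_{n,u}$ if and only if $H_p(t) \ge 0$ for every $t \in \rr^{N(n,u)}$, i.e.\! $H_p \in P_{N(n,u),2v}$. The only real obstacle is getting the multinomial factors $c(i), c(\ell_j)$ lined up correctly in the computation; there is no topological subtlety, since the extremal generators $h^{2v}$ of $W_{n,(u,2v)}$ suffice to test membership in the dual.
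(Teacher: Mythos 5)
Your proof is correct and takes essentially the same route as the paper: both reduce $p \in W_{n,(u,2v)}^*$ to the condition $[p,h^{2v}] \ge 0$ for all $h \in F_{n,u}$, expand $h^{2v}$ monomially, and identify $[p,h^{2v}]$ with $H_p$ evaluated at the ordinary monomial coefficients of $h$. The paper streamlines the bookkeeping by writing $g = \sum_\ell t(\ell)x^\ell$ and applying $[p,x^j]=a(p;j)$ term by term, which makes the multinomial factors $c(\ell_j)$ you tracked disappear from view, but the substance is identical.
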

\begin{proof}
We have $p \in W_{n,(u,v)}^*$ if and only if, for every form $g \in
F_{n,u}$, $[p,g^{2v}] \ge 0$. Writing
$g \in F_{n,u}$ with coefficients $\{t(\ell): \ell \in \mathcal I(n,u)\}$,
we have:
\begin{equation}
\begin{gathered}
g(x) = \sum_{\ell \in \mathcal I(n,u)} t(\ell) x^\ell \implies \\
g^{2v}(x) =  \sum_{\ell_1 \in \mathcal I(n,u)}\cdots\sum_{\ell_{2v} \in \mathcal I(n,u)}
t(\ell_1)\cdots t(\ell_{2v}) x^{\ell_1 + \cdots+ \ell_{2v}}.
\end{gathered}
\end{equation}
It follows from \eqref{E:ip} and \eqref{E:wardual} that $[p,g^v] =
H_p(t(\ell))$. 
\end{proof}

If $v=1$, then $\mathcal I(n,1) = \{e_i\}$ and, upon writing $t(e_i) =
y_i$, $H_p(y_1,\dots,y_n) = p(y)$; we recover $Q_{n,2r}^* = P_{n,2r}$. 
If $u=1$, then $H_p$ becomes the classical catalecticant and 
\begin{equation}\label{E:cata}
p \in \Sigma_{2,2r}^* 
\iff H_p(t)  = \sum_{i \in \mathcal I(n,r)}\sum_{j \in \mathcal I(n,r)}
a(p;i+j)t(\ell_i)t(\ell_j)\ \text{ is $psd$}. 
\end{equation}
This shows that $\Sigma_{n,2r}$ is a spectrahedron (see \cite[p.27]{SSS}).

\begin{theorem}
If $\sum v_i = r$, then $W_{2,\{(1,2v_1),\dots,(1,2v_m)\}} = P_{2,2r}$
iff $m=r$ and $v_i=1$. 
\end{theorem}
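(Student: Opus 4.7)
The plan is to treat the two implications separately; the backward (``only if'') direction is the substantive one.

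For the ``if'' direction, assume $m=r$ and $v_i=1$ for all $i$. Each generator of $W := W_{2,\{(1,2),\dots,(1,2)\}}$ (with $r$ copies of $(1,2)$) has the form $\prod_{i=1}^r h_{k,i}^2 = (h_{k,1}\cdots h_{k,r})^2$, so $W\subseteq \Sigma_{2,2r}=P_{2,2r}$ by the classical fact that psd binary forms are sos. For the reverse containment, factor an arbitrary $p\in P_{2,2r}$ over $\rr$ as $p=c\prod_l L_l^{2n_l}\prod_j A_j^{m_j}$ with $c>0$, each $L_l\in F_{2,1}$, and each $A_j$ a real positive-definite irreducible quadratic written $A_j=B_j^2+C_j^2$ with $B_j,C_j\in F_{2,1}$. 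The binomial expansion
\[
(B_j^2+C_j^2)^{m_j} = \sum_{k=0}^{m_j}\binom{m_j}{k}\bigl(B_j^k C_j^{m_j-k}\bigr)^2
\]
places $A_j^{m_j}$ in $W_{2,\{(1,2),\dots,(1,2)\}}$ (with $m_j$ copies), and $L_l^{2n_l}$ trivially lies in the corresponding $W$ with $n_l$ copies (a single summand). Since $\sum_j m_j+\sum_l n_l=r$ and the $*$-product of Waring blenders concatenates their index tuples (Theorems 3.5(ii) and 3.7), we conclude $p\in W$.

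For the ``only if'' direction, assume $m<r$, i.e., some $v_i\ge 2$. Let $L_1,\dots,L_r\in F_{2,1}$ be any $r$ pairwise non-proportional real linear forms with roots $(a_j,b_j)\in\rr^2\setminus\{0\}$, and set $p=(L_1\cdots L_r)^2\in P_{2,2r}$; its real zero set is exactly the union of the $r$ lines $\rr\cdot(a_j,b_j)$. Suppose toward contradiction that $p=\sum_{k=1}^s\prod_{i=1}^m M_{k,i}^{2v_i}$ with $M_{k,i}\in F_{2,1}$. Each summand is psd, and $p(a_j,b_j)=0$ forces every summand to vanish at $(a_j,b_j)$; since $v_i\ge 1$, some $M_{k,i(k,j)}(a_j,b_j)=0$. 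For fixed $k$, the resulting map $j\mapsto i(k,j)$ sends $\{1,\dots,r\}$ into $\{1,\dots,m\}$; since $m<r$, pigeonhole produces $j_1\ne j_2$ with $i(k,j_1)=i(k,j_2)=:i_0$, so $M_{k,i_0}\in F_{2,1}$ vanishes on two non-proportional vectors of $\rr^2$ and must therefore be identically zero. That kills the $k$-th summand, and because this holds for every $k$, the whole representation vanishes, contradicting $p\ne 0$.

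The crux is the pigeonhole step, which exploits the fact that a nonzero linear form in two variables has a single zero direction in $\rr\pp^1$; this bottleneck has no direct analogue for $n\ge 3$, consistent with the theorem's restriction to $n=2$. The ``if'' direction is essentially mechanical once $P_{2,2r}=\Sigma_{2,2r}$, the $\rr$-factorization of a psd binary form, and the elementary Waring identity $(B^2+C^2)^m=\sum_k\binom{m}{k}(B^kC^{m-k})^2$ are in hand.
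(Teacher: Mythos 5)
Your proof is correct, and its substantive ``only if'' direction is essentially the paper's argument: both test the claim on a product of squares of $r$ pairwise non\-proportional linear forms (the paper uses $\prod_{\ell=1}^r(x-\ell y)^2$), note that each psd summand must vanish on all $r$ lines, and deduce that some $h_{k,i}$ is killed because $m<r$ --- you phrase it via pigeonhole on zero directions, the paper via degree counting in the divisibility $\prod_\ell (x-\ell y)\ \vert\ \prod_i h_{k,i}$, but it is the same mechanism. The ``if'' direction differs in a genuinely noticeable (if minor) way. The paper first writes $p\in P_{2,2r}=\Sigma_{2,2r}$ as $p=f_1^2+f_2^2$, factors each $f_i$ over $\rr$ into linear and pd quadratic factors, and then applies the two-term identity \eqref{E:h22} to the squared quadratics; you skip the sos step entirely, factor $p$ itself as $c\prod_l L_l^{2n_l}\prod_j A_j^{m_j}$, and expand $(B_j^2+C_j^2)^{m_j}$ via the binomial theorem before concatenating via the $*$-structure of Theorems 3.5(ii)/3.7. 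Your route is a little more direct and avoids the (slightly wasteful) doubling coming from $p=f_1^2+f_2^2$; the paper's route keeps closer to the $\Sigma_{2,2r}$ viewpoint that runs through the rest of the section and keeps the number of auxiliary terms small via the two-term Brahmagupta--Fibonacci splitting. Both hinge on the same two facts --- real factorization of binary forms, and the principle that powers of sums of two squares decompose into sums of products of squares of linear forms --- so the difference is one of bookkeeping rather than substance.
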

\begin{proof}
If $p \in P_{2,2r} = \Sigma_{2,2r}$, then $p = f_1^2 + f_2^2$, where $f_i
\in F_{2,r}$. Factor $\pm f_i$ into a product of linear and pd
quadratic factors (themselves a sum of two squares):
\begin{equation}
f_i = \prod_j \ell_{1,j} \prod_k (\ell_{2,k}^2 + \ell_{3,k}^2).
\end{equation} 
Then, using \eqref{E:h22} and expanding the product below, we see that
\begin{equation}
f_i^2 = \prod_j \ell_{1,j}^2 \prod_k \bigl((\ell_{2,k}^2 -
  \ell_{3,k}^2)^2 + (2\ell_{2,k}\ell_{3,k})^2 \bigr) \in W_{2,\{(1,2),\dots,(1,2)\}}.
\end{equation}
The converse inclusion follows from Prop.\!\! 2.5.

Suppose $m < r$ and suppose
\begin{equation}
\prod_{\ell=1}^r (x - \ell y)^2 = \sum_{k=1}^s  h_{k,1}^{2v_1}\cdots
h_{k,m}^{2v_m} ,\quad  h_{k,i}(x,y) = \al_{k,i}x + \be_{k,i}y  \in F_{2,1}.
\end{equation}
Then for each $k$, we have 
\begin{equation}
\prod_{\ell=1}^r (x - \ell y)\ \bigg \vert \ \prod_{i=1}^m (\al_{k,i}x + \be_{k,i}y);
\end{equation}
since $m < r$, the right-hand side is 0, and we have a contradiction.
\end{proof} 

Finally, we have a simple expression for $K_{n,2r}^*$; this
seems to be implicit in  \cite{B}.
\begin{theorem}
$K_{n,2r}$ and $W_{n,\{(1,2r-2),(1,2)\}}$ are dual blenders.
\end{theorem}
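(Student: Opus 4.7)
The plan is to show directly that $K_{n,2r} = W_{n,\{(1,2r-2),(1,2)\}}^*$; since both cones are closed, applying $(\cdot)^*$ again then gives $K_{n,2r}^* = W_{n,\{(1,2r-2),(1,2)\}}$, completing the duality.

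The engine is Corollary 2.10, which identifies the Hessian form with an inner product:
\begin{equation}
Hes(p;u,v) = 2r(2r-1)\,[p,(u\cdot)^{2r-2}(v\cdot)^2].
\end{equation}
By the Hessian definition of convexity, $p \in K_{n,2r}$ is equivalent to $[p,(u\cdot)^{2r-2}(v\cdot)^2] \ge 0$ for all $u,v \in \rr^n$. So the first step is simply to note this equivalence and then recognize $(u\cdot)^{2r-2}(v\cdot)^2$ as the generic generator of $W_{n,\{(1,2r-2),(1,2)\}}$.

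Next I would argue that testing against these rank-one-times-rank-one generators already detects membership in the dual cone. In one direction, if $p \in K_{n,2r}$ and $q = \sum_k h_{k,1}^{2r-2} h_{k,2}^{2} \in W_{n,\{(1,2r-2),(1,2)\}}$ with $h_{k,i} \in F_{n,1}$, say $h_{k,1} = (u_k\cdot)$ and $h_{k,2} = (v_k\cdot)$, then by bilinearity of the inner product,
\begin{equation}
[p,q] = \sum_k [p,(u_k\cdot)^{2r-2}(v_k\cdot)^2] \ge 0,
\end{equation}
so $p \in W_{n,\{(1,2r-2),(1,2)\}}^*$. Conversely, if $[p,(u\cdot)^{2r-2}(v\cdot)^2] \ge 0$ for all $u,v$, then by Corollary 2.10 $p$ is convex.

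This shows $K_{n,2r} = W_{n,\{(1,2r-2),(1,2)\}}^*$. Since $W_{n,\{(1,2r-2),(1,2)\}}$ is a blender (Theorem 3.7, using $u_1=u_2=1$, $v_1=r-1$, $v_2=1$, so it is a closed convex cone), the biduality $(C^*)^* = C$ for closed convex cones then yields $K_{n,2r}^* = W_{n,\{(1,2r-2),(1,2)\}}$. I expect there are no genuine obstacles here: the only subtlety is confirming that every element of $W_{n,\{(1,2r-2),(1,2)\}}$ is a (finite, by Carath\'eodory-style compression as in Prop.\! 3.4) nonnegative combination of generators $(u\cdot)^{2r-2}(v\cdot)^2$, so that checking the defining inequality on generators suffices; this is immediate from the definition in \eqref{E:b1*b2} applied to $W_{n,(1,2r-2)}*W_{n,(1,2)}$.
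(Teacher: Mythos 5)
Your proof is correct and takes essentially the same approach as the paper: Corollary 2.10 identifies $Hes(p;u,v)$ with $2r(2r-1)[p,(u\cdot)^{2r-2}(v\cdot)^2]$, and the Hessian definition of convexity then makes membership in $K_{n,2r}$ exactly the dual-cone condition against the generators of $W_{n,\{(1,2r-2),(1,2)\}}$. The paper simply leaves the bilinearity and biduality bookkeeping implicit; your version spells it out.
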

\begin{proof}
By Corollary 2.10 and the Hessian definition, $p$ is convex if and
only if $0 \le Hes(p;u,v) = 2r(2r-1)[p, (u\cdot)^{2r-2}(v\cdot )^2]$
for all $u,v \in \rr^n$. 
\end{proof}

 It follows from Theorems 3.10 and 3.11 that
 $K_{2,4}^* = W_{2,\{(1,2),(1,2)\}} = P_{2,4}$, so $K_{2,4} =
 Q_{2,4}$. For $r \ge 3$,  $K_{2,2r}^* = W_{2,\{(1,2r-2),(1,2)\}}
 \subsetneq P_{2,4}$, so $K_{2,2r} \supsetneq Q_{2,2r}$.  We 
return to this topic in section six.

\section{$K_{n,2r}$: convex forms}
In this section, we prove some general results for $K_{n,2r}$. 
Since $p \in K_{n,2r}$ if and only if $Hes(p;u,v)$ is psd and
$Hes(p;u,u) = 2r(2r-1)p(u)$, we get an alternative proof that
$K_{2,2r} \subseteq P_{n,2r}$. We also know from Theorem 3.11 that $p
\in int(K_{n,2r})$ if and only if $[p,q] > 0$ for $0 \neq q \in
W_{n,\{(1,2r-2),(1,2)\}}$; accordingly, $int(K_{n,2r})$ is the set of $p
\in K_{2,2r}$ so that $Hes(p;u,v)$ is positive definite as a
bihomogeneous form in the variables $u \in \rr^n$ and $v \in
\rr^n$. Equivalently, $p \in K_{n.2r}$ is in $\partial(K_{n,2r})$ if and only if
there exist $u_0\neq 0, v_0 \neq 0$ such that $Hes(p;u_0,v_0) = 0$. 

Although the  psd and sos  properties are preserved under homogenization and
dehomogenization, this is not true for convexity. For example, $t^2-1$
is a convex polynomial which cannot be homogenized to a convex form,
because it is not definite. As a pd polynomial in one variable, 
 $t^4 + 12 t^2 + 1$ is convex, but if $p(x,y) = x^4 +
 12x^2y^2 + y^4$, then $Hes(p;(1,1),(v_1,v_2)) = 36v_1^2 +
 96v_1v_2 + 36v_2^2$ is not psd, so $p$ is not convex. 

\begin{proposition}
If $p \in K_{2,2r}$, then there is a pd form $q$ in $\le n$ variables
and $\bar p \sim p$ such that $\bar p(x) = q(x_k,\dots, x_n)$. 
\end{proposition}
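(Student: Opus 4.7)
The plan is to identify the zero set $V = p^{-1}(0)$ as a linear subspace of $\rr^n$, normalize it to a coordinate subspace via a change of variable, and then show the transformed form depends only on the complementary coordinates. Since $K_{n,2r} \subseteq P_{n,2r}$ by Prop.\ 2.5, $p \ge 0$. If $p(u) = p(v) = 0$, convexity yields $p(\tfrac{u+v}{2}) \le \tfrac12(p(u)+p(v)) = 0$, whence $p(\tfrac{u+v}{2}) = 0$ and homogeneity gives $p(u+v) = 2^{2r}p(\tfrac{u+v}{2}) = 0$; together with $p(\la u) = \la^{2r}p(u)$, this forces $V$ to be a linear subspace, say of dimension $m$. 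Pick invertible $M$ sending $V$ onto $\mathrm{span}(e_1,\dots,e_m)$ and set $\bar p = p \circ M^{-1}$, so $\bar p \sim p$ and $\bar p(y,0) \equiv 0$ for $y \in \rr^m$.

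The crux is upgrading vanishing on $V$ to actual independence from $x_1,\dots,x_m$. I would reduce to the binary case by restricting to two-planes: for any $y_0 \in \mathrm{span}(e_1,\dots,e_m)$ and $z_0 \in \mathrm{span}(e_{m+1},\dots,e_n)$, the binary form $h(s,t) := \bar p(sy_0 + tz_0) \in F_{2,2r}$ is convex (restriction of convex to a subspace) and satisfies $h(s,0) \equiv 0$. The key lemma is then: \emph{if $h \in K_{2,2r}$ and $h(s,0) \equiv 0$, then $h(s,t) = c\,t^{2r}$.} To prove it, factor $h = t^{2k} g$ with $2k \ge 2$ maximal (evenness of the multiplicity is forced by $h \ge 0$, since an odd multiplicity would make $h/t^{\mathrm{odd}}$ change sign with $t$, contradicting maximality); then $g \in F_{2,2r-2k}$ is psd with $g(s,0) \not\equiv 0$, so by homogeneity $g(s,0) = b\,s^{2r-2k}$ with $b > 0$. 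Assuming $2k < 2r$ for contradiction, the leading small-$t$ behavior of the Hessian determinant $h_{ss}h_{tt} - h_{st}^2$ is $t^{4k-2}\bigl[2k(2k-1)\,g(s,0)\,g_{ss}(s,0) - 4k^2\,g_s(s,0)^2\bigr]$; substituting $g(s,0)=b\,s^{2r-2k}$ and using the identity $(2k-1)(2r-2k-1) - 2k(2r-2k) = 1-2r$ collapses the bracket to $-2k(2r-2k)(2r-1)b^2 s^{4(r-k)-2}$, which is strictly negative for $s \ne 0$ and $k < r$. This contradicts the psd Hessian, so $2k=2r$ and $h = c\,t^{2r}$.

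Given the lemma, for every $(y_0,z_0)$ the equality $h(1,1) = h(0,1)$ reads $\bar p(y_0+z_0) = \bar p(z_0)$, so $\bar p$ depends only on $x_{m+1},\dots,x_n$. Setting $q(z) := \bar p(0,z)$ gives $\bar p(x) = q(x_{m+1},\dots,x_n)$, i.e.\ $k=m+1$, and $q$ is pd because a nontrivial zero of $q$ would embed into $\rr^n$ as a vector in $V$ outside $\mathrm{span}(e_1,\dots,e_m)$. The main obstacle is the Hessian identity in the binary lemma; everything else is routine manipulation in the standard psd/convex toolkit.
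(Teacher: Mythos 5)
Your proof is correct, and it is organized differently from the paper's even though the computational engine is the same. Both proofs restrict to a two-dimensional slice through a zero of the form and derive a contradiction from the sign of the leading term of the $2\times 2$ Hessian determinant; with $m = 2r-2k$, your coefficient $-2k(2r-2k)(2r-1)\,b^2\, s^{4(r-k)-2}$ is exactly the paper's $-(2r-1)m(2r-m)x_1^{2m-2}x_2^{4r-2m-2}$, up to the normalization constant $b$. What differs is the scaffolding around this computation. The paper arranges $\bar p(e_1)=0$, extracts the highest power $m$ of $x_1$ that appears, normalizes so that $x_1^m x_2^{2r-m}$ occurs with nonzero coefficient, runs the Hessian computation on the $(x_1,x_2)$-slice, and then \emph{iterates} one variable at a time until the restricted form is positive definite. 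You instead first prove, using convexity together with $p\ge 0$, that the zero set $V = p^{-1}(0)$ is a linear subspace, normalize $V$ to $\mathrm{span}(e_1,\dots,e_m)$ in one change of variables, and isolate the computation in a self-contained binary lemma ($h\in K_{2,2r}$ with $h(s,0)\equiv 0$ forces $h=ct^{2r}$), which you then apply uniformly over all two-planes spanned by a zero direction and a nonzero direction. The observation that $V$ is a linear subspace is a pleasant structural fact that is implicit but never stated in the paper's iterative argument, and it makes the positive definiteness of the resulting $q$ immediate rather than an artifact of when the iteration terminates. One small slip in wording: when you argue that the multiplicity of $t$ in $h$ must be even, an odd multiplicity would make $h$ itself change sign near $t=0$, contradicting $h \ge 0$, not ``maximality.''
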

\begin{proof}
If $p$ is pd, there is nothing to prove. Otherwise, we can assume that
$p \sim \bar p$, where $\bar p$ is convex and 
$\bar p(e_1) = 0$. We shall show that $\bar p = \bar
p(x_2,\dots,x_n)$. Repeated application of this argument then proves the result.

 Suppose otherwise that $x_1$ appears in a term of $\bar p$ and let
$m \ge 1$ be the largest such power of $x_1$; write 
the associated terms in $\bar p$ as $x_1^mh(x_2,...,x_n)$. After
an additional invertible  linear change involving $(x_2,\dots,x_n)$,
we may assume that 
one of these terms is $x_1^mx_2^{2r-m}$. We then  have 
\begin{equation}
\bar p(x_1,x_2,0,\dots,0) = x_1^m x_2^{2r-m} + \text{lower order terms in $x_1$}
\end{equation} 
which implies that
\begin{equation}\label{E:binhess}
\begin{gathered}
\frac{\partial^2 \bar p}{\partial x_1^2}\frac{\partial^2 \bar
  p}{\partial x_2^2}
- \left(\frac{\partial^2 \bar p}{\partial x_1\partial x_2}\right)^2 =  \\
 -(2r-1)m(2r-m) x_1^{2m-2}x_2^{4r-2m-2}  + \text{lower order terms in $x_1$}.
\end{gathered}
\end{equation}
Since $r \ge 1$ and $1 \le m \le 2r-1$, \eqref{E:binhess} cannot be
psd, and this contradiction shows that $x_1$ does not occur in $\bar p$.
\end{proof}
\begin{corollary}
There do not exist $B_i \in \mathcal B_{n,2r_i}$, $r_i \ge 1$, so that
$K_{n,2r_1+2r_2} = B_1 * B_2$.
\end{corollary}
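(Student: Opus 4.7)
The plan is to derive a contradiction by pinpointing an element of $B_1*B_2$ that cannot be convex. The main tools are Proposition 2.5, used to normalize the signs of the $B_i$, and a Hessian subdeterminant computation on a single monomial, essentially the special case of \eqref{E:binhess} already carried out in the proof of Proposition 4.1.

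First I would handle signs. The case $n=1$ is trivial (each $F_{1,d}$ is one-dimensional), so assume $n \ge 2$. Neither $B_i$ can be $\{0\}$ nor $F_{n,2r_i}$: the former would force $B_1*B_2 = \{0\}$, while the latter would force $\pm (x_1^{2r_1} p_2) \in B_1 * B_2$ for every $p_2$ in the other blender, both incompatible with $B_1*B_2 = K_{n,2r_1+2r_2}$, which is a nonzero pointed subcone of $P_{n,2r_1+2r_2}$. So Proposition 2.5 yields $Q_{n,2r_i} \subseteq \pm B_i$. Since $(-p_1)(-p_2) = p_1 p_2$, we have $B_1 * B_2 = (-B_1)*(-B_2)$, so flipping both signs simultaneously is harmless. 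The mixed case $B_1 \subseteq P_{n,2r_1}$ and $B_2 \subseteq -P_{n,2r_2}$ is impossible, since it would force $B_1*B_2 \subseteq -P_{n,2r_1+2r_2}$, whereas convex forms are psd (by Corollary 2.10, $Hes(p;u,u) = 2r(2r-1)p(u) \ge 0$). Hence we may assume $Q_{n,2r_i} \subseteq B_i$ for $i=1,2$.

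In particular $x_1^{2r_1} \in B_1$ and $x_2^{2r_2} \in B_2$, so $f := x_1^{2r_1} x_2^{2r_2}$ lies in $B_1 * B_2$. If $B_1 * B_2 = K_{n,2r_1+2r_2}$, then $f$ must be convex. But the $2 \times 2$ Hessian subdeterminant of $f$ in the $(x_1, x_2)$ variables is
\[
f_{x_1 x_1} f_{x_2 x_2} - f_{x_1 x_2}^2 = 4 r_1 r_2 (1 - 2r_1 - 2r_2)\, x_1^{4r_1 - 2} x_2^{4r_2 - 2},
\]
which is strictly negative whenever $x_1 x_2 \neq 0$, using $r_1, r_2 \ge 1$. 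Hence the full Hessian matrix of $f$ fails to be psd at any such point, so $f$ is not convex; this is exactly \eqref{E:binhess} specialized to $m = 2r_1$ with no lower-order corrections. The only mildly delicate part is the sign bookkeeping in the first step, and the Hessian calculation itself is routine, so I expect no real obstacle.
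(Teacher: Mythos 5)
Your proof is correct and takes essentially the same approach as the paper: both identify $x_1^{2r_1}x_2^{2r_2} \in B_1*B_2$ as a form that cannot be convex. The paper gets non-convexity of this monomial by citing Proposition 4.1 (a convex form with a nontrivial zero is constant in some direction), whereas you inline the underlying $2\times 2$ Hessian minor calculation — which is the special case of \eqref{E:binhess} used to prove Proposition 4.1 in the first place — and you also spell out the sign normalization via Proposition 2.5 more carefully than the paper does. Both of these are correct and minor elaborations rather than a different route.
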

\begin{proof}
It follows from  Prop.\!\!  2.5 that $x_i^{2r_i} \in B_i$, hence
$x_1^{2r_1}x_2^{2r_2} \in B_1*B_2$, but by Prop.\!\! 4.1, this form
is not convex. 
\end{proof}
The next theorem connects $K_{n,2r}$ with the blender $N_{n,2r}$ defined
in \cite[p.119-120]{Re1}. Let $E = <\!\! e_1,\dots,e_n\! \!>$ be a real
$n$-dimensional vector space. We say that $f$ is a {\it
  norm-function}  on $E$ if, after defining 
\begin{equation}\label{E:nf}
||x_1e_1+ \dots + x_ne_n|| = f(x_1,\dots,x_n),
\end{equation}
the pair $(E,||\cdot||)$ is a Banach space. Let
\begin{equation}\label{E:nnd}
N_{n,d}:= \{p \in F_{n,d}: p^{1/d} \text{ is a norm function} \}.
\end{equation}
A necessary condition is that $f = p^{1/d} \ge 0$, hence $d=2r$ is
even and $p \in P_{n,2r}$.
For example, if $p(x) = \sum_k x_k^2$, then \eqref{E:nf} with $f=p^{1/2}$
gives $\rr^n$ with the Euclidean norm. If $(E,||\cdot||)$ is isometric to a
subspace of some $L_{2r}(X,\mu)$, then $f^{2r} \in Q_{n,2r}$.
The following theorem was proved in the author's thesis; see \cite{Re0,Re00}.

\begin{proposition}\cite[Thm.1]{Re00}
If $p \in P_{n,2r}$, then $p \in N_{n,2r}$  iff for all $u,v \in \rr^n$,
$p(u_1+tv_1,\dots,u_n+tv_n)^{1/d}$ is a convex function of $t$. 

\end{proposition}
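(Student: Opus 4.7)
The plan is to reduce the equivalence to the observation that the triangle inequality for $f := p^{1/(2r)}$ is equivalent to the convexity of $t \mapsto f(u+tv)$ for all $u,v \in \rr^n$. The other norm axioms needed to make $f$ a norm-function come for free from the hypothesis $p \in P_{n,2r}$: nonnegativity of $f$ is just the psd condition, absolute homogeneity $f(cx) = |c|f(x)$ comes from $p$ being homogeneous of degree $2r$, and completeness is automatic in the finite-dimensional space $\rr^n$.

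For the forward direction, assume $p \in N_{n,2r}$, so $f$ is a norm. Fix $u,v \in \rr^n$ and set $g(t) = f(u+tv)$. For $s,t \in \rr$ and $\la \in [0,1]$, the identity $u + (\la s + (1-\la)t)v = \la(u+sv) + (1-\la)(u+tv)$ combined with the triangle inequality and homogeneity of the norm gives
\[ g(\la s + (1-\la)t) \le \la f(u+sv) + (1-\la)f(u+tv) = \la g(s) + (1-\la)g(t), \]
which is the desired convexity of $g$ in $t$.

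For the converse, assume that $t \mapsto p(u+tv)^{1/(2r)}$ is convex for every $u,v$. Given arbitrary $a,b \in \rr^n$, apply the hypothesis with $u = a$ and $v = b-a$. Evaluating $g(t) = f(u+tv)$ at $t = 0, \tfrac12, 1$ yields $g(0) = f(a)$, $g(1) = f(b)$, and $g(\tfrac12) = f\bigl(\tfrac{a+b}{2}\bigr) = \tfrac12 f(a+b)$, where the last equality uses the degree-$1$ homogeneity of $f$. Midpoint convexity $g(\tfrac12) \le \tfrac12(g(0)+g(1))$ then becomes $\tfrac12 f(a+b) \le \tfrac12(f(a)+f(b))$, which is the triangle inequality.

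The main subtlety I expect is positive-definiteness: the convexity condition by itself is compatible with $p$ vanishing on a nontrivial subspace (for instance $p = x_1^{2r}$ is convex along every line but yields only a seminorm on $\rr^n$). The Banach-space clause in the definition of $N_{n,2r}$ forces $p$ to be positive definite, which closes this gap in the backward direction. With that noted, the two implications above establish the claimed equivalence.
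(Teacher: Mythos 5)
The paper does not reprove this proposition; it is quoted from \cite{Re00}, so there is no internal proof to compare against. Your argument is the natural one: a norm is line-convex by the triangle inequality and degree-one homogeneity, and conversely midpoint convexity with $u=a$, $v=b-a$ recovers the triangle inequality. Both implications are executed correctly, and the reduction of full convexity to midpoint convexity is justified by the continuity of $p^{1/(2r)}$.

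Where I would push back is your last paragraph. You are right that the convexity hypothesis cannot detect whether $p$ vanishes on a subspace, and $x_1^{2r}$ is exactly the right example. But saying the Banach-space clause ``closes this gap'' is not an argument: that clause belongs to the conclusion you want, not to the hypothesis you have. As literally written, the backward implication fails for $x_1^{2r}$, and no proof can repair a false statement. The looseness is already present in the proposition itself (and it propagates to Corollary 4.5, since $x_1^{2r} \in K_{n,2r}$ yet induces only a seminorm on $\rr^n$ for $n \geq 2$). The honest remedy is either to add positive definiteness as a hypothesis in the backward direction, or to invoke Proposition 4.1 to pass to the subspace on which $p$ is positive definite and reinterpret the definition of $N_{n,2r}$ on that quotient; state explicitly which convention you intend rather than asserting that the definition takes care of itself.
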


It is not obvious  that $N_{n,2r}$ is a blender; in fact,
$N_{n,2r} = K_{n,2r}$! The connection is a proposition  whose
provenance is unclear. It appears in Rockafellar's monograph
\cite[Cor.15.3.1]{Ro}, where it is attributed to Lorch \cite{Lo}, although the
derivation is not transparent. 
V. I. Dmitriev (see section 6) attributes the result to an
observation by his advisor S. G.  Krein in 1969. Note below that $q$ is
{\it not} homogeneous. 

\begin{proposition}
Suppose $p \in P_{n,2r}$ and $p(1,0,...,0) > 0$. Let 
\begin{equation}
q(x_2,\dots,x_n) = p(1,x_2,\dots,x_n). 
\end{equation}
Then $p \in K_{n,2r}$ if and only if $q^{1/(2r)}(x_2,\dots,x_n)$ is
convex.
\end{proposition}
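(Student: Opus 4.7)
The plan is to identify $p^{1/(2r)}$, restricted to the half-space $\{x_1>0\}$, as the \emph{perspective function} of $f:=q^{1/(2r)}$. By the homogeneity of $p$,
\begin{equation*}
p(x_1,x_2,\dots,x_n)^{1/(2r)} \;=\; x_1\, f\!\left(\tfrac{x_2}{x_1},\dots,\tfrac{x_n}{x_1}\right) \qquad (x_1>0).
\end{equation*}
A standard convex-analytic fact says that a non-negative function $f$ on $\rr^{n-1}$ is convex if and only if its perspective $(t,y)\mapsto t\,f(y/t)$ is convex on $(0,\infty)\times\rr^{n-1}$: the forward direction is a short calculation, and the converse follows by restricting the perspective to the affine slice $t=1$. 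So the proposition reduces to the equivalence $p\in K_{n,2r}$ iff $p^{1/(2r)}$ is convex on $\{x_1>0\}$.

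For the direction ($\Rightarrow$), assume $p$ is convex. Since $K_{n,2r}\subseteq P_{n,2r}$, we have $p\ge 0$, so the sublevel set $K:=\{x\in\rr^n:p(x)\le 1\}$ is convex, centrally symmetric (because $p(-x)=p(x)$), and contains $0$. Its Minkowski gauge $\mu_K$ is therefore a seminorm, and the scaling $p(x/t)=t^{-2r}p(x)$ gives $\mu_K(x)=p(x)^{1/(2r)}$. Seminorms are convex on all of $\rr^n$, so in particular on $\{x_1>0\}$.

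For the direction ($\Leftarrow$), assume $p^{1/(2r)}$ is convex on $U:=\{x_1>0\}$. Composing with the convex, non-decreasing function $t\mapsto t^{2r}$ on $[0,\infty)$ shows that $p$ itself is convex on $U$, so the Hessian matrix $D^2 p(u)$ is PSD at every $u\in U$. Being PSD is a closed condition, so $D^2p$ is PSD on the closure $\{x_1\ge 0\}$; differentiating $p(-x)=p(x)$ twice gives $D^2p(-u)=D^2p(u)$, making the PSD locus centrally symmetric, hence equal to all of $\rr^n$. By the Hessian definition of convexity, $p\in K_{n,2r}$.

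The main obstacle is justifying the perspective-function equivalence in the present generality: because $p$ is only psd, $f$ may vanish somewhere on $\rr^{n-1}$, and the argument must not rely on smoothness or strict positivity of $f$, only on convexity. The hypothesis $p(1,0,\dots,0)>0$ is used precisely to ensure that $f$ is continuous and positive near the origin of $\rr^{n-1}$, so the dehomogenized object $q^{1/(2r)}$ is not identically zero and the perspective correspondence is genuinely $(n-1)$-dimensional.
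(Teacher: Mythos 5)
Your proof is correct, and it takes a genuinely different route from the paper's. The paper reduces to two-dimensional sections, makes a sequence of affine changes of variable to normalize to $a = e_1$, and then compares two explicit Taylor-series computations: the discriminant of $Hes(h;(1,0),\cdot)$ for the restricted binary form $h$, and $(q^{1/(2r)})''(0)$, observing that both are controlled by the sign of $a_0a_2-a_1^2$. Your argument instead views $p^{1/(2r)}\!\restriction_{\{x_1>0\}}$ as the perspective of $q^{1/(2r)}$, invokes the standard fact that perspectives preserve and detect convexity, and then handles the two implications with two separate soft arguments: for $(\Rightarrow)$, the Minkowski-gauge identification $\mu_{\{p\le 1\}}=p^{1/(2r)}$ shows $p^{1/(2r)}$ is a sublinear (indeed seminorm) function whenever $p$ is convex and psd; for $(\Leftarrow)$, composition with $t\mapsto t^{2r}$ gives convexity of $p$ on the open half-space, and continuity of $D^2p$ together with the evenness $D^2p(-u)=D^2p(u)$ propagates the psd Hessian to all of $\rr^n$. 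Each step is sound; in particular the perspective theorem needs neither positivity nor smoothness of $f$, so the cases where $q$ vanishes cause no trouble. What the two approaches buy is roughly this: the paper's computational proof is self-contained and makes the role of the hypothesis $p(e_1)>0$ visible (it divides by $a_0$), while yours is cleaner, avoids coordinates, and in effect proves the Lorch-type identification $K_{n,2r}=N_{n,2r}$ (the paper's Corollary 4.5) in the same breath. One small remark: your proof never actually uses the hypothesis $p(1,0,\dots,0)>0$ in an essential way — the gauge and perspective arguments go through without it — so your argument is, if anything, slightly more general than the stated proposition; the hypothesis is needed only for the paper's division by $a_0$.
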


\begin{corollary}
$K_{n,2r} = N_{n,2r}$.
\end{corollary}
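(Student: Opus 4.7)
The plan is to use Proposition 4.3 as a bridge. Since a function on $\rr^n$ is convex if and only if its restriction to every line is convex, Proposition 4.3 identifies $N_{n,2r}$ with the set of $p \in P_{n,2r}$ such that $f := p^{1/(2r)}$ is convex on $\rr^n$. The corollary therefore reduces to the assertion that, for $p \in P_{n,2r}$, $p$ is convex if and only if $p^{1/(2r)}$ is convex.

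The forward direction $N_{n,2r} \subseteq K_{n,2r}$ is a short composition argument: if $f = p^{1/(2r)} \ge 0$ is convex and $t \mapsto t^{2r}$ is convex and nondecreasing on $[0,\infty)$, then for $\la \in [0,1]$ and $x, y \in \rr^n$,
\begin{equation*}
f(\la x + (1-\la)y)^{2r} \le (\la f(x) + (1-\la)f(y))^{2r} \le \la f(x)^{2r} + (1-\la)f(y)^{2r},
\end{equation*}
so $p = f^{2r}$ is convex, i.e., $p \in K_{n,2r}$.

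For the reverse inclusion $K_{n,2r} \subseteq N_{n,2r}$, my preferred route is via the Minkowski functional. Given $p \in K_{n,2r}$, the sublevel set $C := \{x \in \rr^n : p(x) \le 1\}$ is closed, convex (as a sublevel set of the convex function $p$), symmetric (since $p(-x) = p(x)$), and contains a neighborhood of $0$ (as $p$ is continuous with $p(0) = 0$). Its Minkowski functional $\mu_C(x) := \inf\{t > 0 : x/t \in C\}$ is therefore positively homogeneous and subadditive, hence convex. A direct computation using the $2r$-homogeneity of $p$ yields $\mu_C(x) = p(x)^{1/(2r)}$ on all of $\rr^n$: when $p(x) > 0$, the condition $x/t \in C$ is equivalent to $t^{2r} \ge p(x)$; when $p(x) = 0$, every $t > 0$ works and $\mu_C(x) = 0$. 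Thus $p^{1/(2r)}$ is convex, and Proposition 4.3 gives $p \in N_{n,2r}$.

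An alternative reverse-inclusion route invokes Proposition 4.4 directly: after reducing via Proposition 4.1 to the case that $p$ is pd with $p(e_1) > 0$, Proposition 4.4 provides $q^{1/(2r)}$ convex on $\rr^{n-1}$, and the perspective-function identity $p^{1/(2r)}(x_1, x_2, \dots, x_n) = x_1 q^{1/(2r)}(x_2/x_1, \dots, x_n/x_1)$ on $\{x_1 > 0\}$ exhibits $p^{1/(2r)}$ as the perspective of a convex function. The main (modest) obstacle along this alternative route is patching together the three regions $\{x_1 > 0\}$, $\{x_1 < 0\}$ (handled by evenness $p^{1/(2r)}(-x) = p^{1/(2r)}(x)$), and $\{x_1 = 0\}$ (handled by continuity) into a function convex on all of $\rr^n$; the Minkowski argument avoids this case analysis entirely, which is why I would prefer it.
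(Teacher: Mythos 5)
Your proof is correct, and the reverse inclusion $K_{n,2r} \subseteq N_{n,2r}$ is handled by a genuinely different argument than the paper's. The paper leaves the corollary's proof implicit, reading it off from Proposition 4.3 (which characterizes $N_{n,2r}$ by line-wise convexity of $p^{1/(2r)}$) and Proposition 4.4 (which characterizes $K_{n,2r}$, for forms with $p(e_1)>0$, by convexity of the dehomogenization $q^{1/(2r)}$ with $q = p(1,x_2,\dots,x_n)$); Proposition 4.4 itself is proved by reducing the Hessian condition to a two-variable restriction and a pointwise Taylor computation at $t=0$. Your route replaces that entire computational machinery with the observation that for $p \in K_{n,2r}$ the sublevel set $C = \{p \le 1\}$ is closed, convex, symmetric and absorbing, and its Minkowski functional is exactly $p^{1/(2r)}$, which is therefore sublinear and hence convex. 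This is cleaner and avoids both the dehomogenization and the patching across $\{x_1 = 0\}$ that your alternative perspective-function route would require. What the paper's route buys, beyond the corollary, is the sharper local statement of Proposition 4.4 (pointwise equivalence of Hessian positivity with local convexity of $q^{1/(2r)}$), which ties to the Dmitriev--Krein history and is used elsewhere (e.g., in the proof of Theorem 4.6); your argument establishes only the global equivalence. Your forward inclusion via composition of a nonnegative convex function with $t\mapsto t^{2r}$ is correct and is the standard short argument.
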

\begin{proof}[Proof of Prop.\!\! 4.4]
A function is convex if and only if it is convex when restricted
to all two-dimensional subspaces. 
Consider all $a \in \rr^N$ with $a_1 = 1$. Suppose we can show that 
$Hes(p;a,u)$ is psd in $u$ if and only if $q^{1/(2r)}$ is
convex at $(a_2,\dots,a_n)$. By homogeneity, this
occurs if and only if $Hes(p;a,u)$ is psd in $u$ for every $a$ with $a_1
\neq 0$ and by continuity, this holds if and only
if $Hes(p;a,u)$ is psd for all $a,u$. Thus, it suffices to
set $a_1 = 1$ and prove the equivalence pointwise. 

 Fix $(a_2,\dots,a_n)$ and let
\begin{equation}
\begin{gathered}
\tilde p(x_1,x_2\dots, x_n) = p(x_1, x_2 + a_2 x_1, \dots, x_n + a_nx_1),\\
\tilde q(x_2,\dots,x_n) = \tilde p(1,x_2,\dots x_n) = q(x_2+a_2,\dots,x_n+a_n)
\end{gathered}
\end{equation}
Then $p$ and $q^{1/(2r)}$ are convex at $a$ and $(a_2,\dots,a_n)$  iff
$\tilde p$ and 
$\tilde q$ are convex at $e_1$ and 0, and we can drop the tildes and
assume that  
$a_k = 0$ for $k \ge 2$, so $a = e_1$. 
Since it suffices to look at all two-dimensional
subspaces containing $e_1$, we make one more change of variables in
$(x_2,\dots,x_n)$, and assume  this 
subspace is $\{(x_1,x_2,0,\dots,0)\}$.

Suppose now that
\begin{equation}
h(x_1,x_2) = p(x_1,x_2,0,\dots,0) = a_0 x_1^{2r} + \binom {2r}1 a_1
x_1^{2r-1}x_2 + \binom {2r}2 a_2x_1^{2r-2}x_2^2 + \dots. 
\end{equation}
Then
\begin{equation}
Hes(h;(1,0),(v_1,v_2)) = 2r(2r-1)(a_0 v_1^2 + 2a_1 v_1v_2 + a_2 v_2^2), 
\end{equation} 
and since $a_0 = p(e_1) > 0$, this is psd iff $a_0a_2 \ge  a_1^2$. On
the other hand,  
\begin{equation}
q(t) = p(1,t) = a_0 + \binom {2r}1 a_1 t + \binom {2r}2 a_2 t^2 + \dots
\end{equation}
and a routine computation shows that
\begin{equation}
(q^{(1/(2r))})''(0) = (2r-1)a_0^{-2+1/(2r)}(a_0a_2 - a_1^2).
\end{equation}
Thus the two conditions hold simultanously.
\end{proof}

A more complicated proof computes the Hessian of $p$, uses the Euler
PDE ($2rp = \sum x_i \frac{\partial p}{\partial x_i}$ and 
$(2r-1)\frac{\partial p}{\partial x_i} = \sum x_j\frac{\partial^2
  p}{\partial x_i\partial x_j} $) to replace partials involving $x_1$
with partials involving only the other variables. 
The discriminant  of this Hessian with respect to $u_1$ (after a
change of variables) becomes a positive multiple of the Hessian of $q^{1/(2r)}$.

We conclude this section with a peculiar result which implies that
every pd form is, in a computable way, the restriction of a convex
form on $S^{n-1}$.  

\begin{theorem}
Suppose $p \in P_{n,2r}$ is pd, and let $p_N:= (\sum_j x_j^2)^N p$.
Then there exists $N$ so that $p_N \in K_{n,2r+2N}$.
\end{theorem}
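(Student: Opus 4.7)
The plan is to verify convexity of $p_N$ directly from the Hessian definition, leveraging the positive definiteness of $p$ and the fact that $s(x) := \sum_j x_j^2$ has a very simple gradient and Hessian. Writing $p_i = \partial p/\partial x_i$ and applying the product rule to $p_N = s^N p$ gives
\begin{equation*}
\frac{\partial^2 p_N}{\partial x_i\partial x_j} = 4N(N-1)x_ix_j s^{N-2}p + 2N\delta_{ij}s^{N-1}p + 2N s^{N-1}(x_ip_j + x_jp_i) + s^N p_{ij}.
\end{equation*}
Contracting with $v_iv_j$ at a point $u$ expresses $Hes(p_N;u,v)$ as a polynomial-in-$N$ combination of $p(u)\langle u,v\rangle^2$, $p(u)\|v\|^2$, $\langle u,v\rangle\langle v,\nabla p(u)\rangle$, and $Hes(p;u,v)$.

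Since $Hes(p_N;u,v)$ is homogeneous in $u$ and quadratic in $v$, I normalize $\|u\|=1$ and decompose $v = \alpha u + w$ with $\alpha = \langle u,v\rangle$ and $w \perp u$. Euler's identity $\sum_i u_i p_i(u) = 2r\,p(u)$ converts $\langle v,\nabla p(u)\rangle$ into $2r\alpha\,p(u) + \langle w,\nabla p(u)\rangle$, and after collecting terms the Hessian becomes
\begin{equation*}
Hes(p_N;u,v) = \bigl[4N^2+(8r-2)N\bigr]\alpha^2 p(u) + 2N\|w\|^2 p(u) + 4N\alpha\langle w,\nabla p(u)\rangle + Hes(p;u,v).
\end{equation*}

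Positive definiteness now enters via compactness: $m := \min_{\|u\|=1}p(u) > 0$, while $C_1 := \max_{\|u\|=1}\|\nabla p(u)\|$ and $C_2 := \max_{\|u\|=\|v\|=1}|Hes(p;u,v)|$ are finite. Cauchy--Schwarz gives $|\langle w,\nabla p(u)\rangle| \le C_1\|w\|$ and $|Hes(p;u,v)| \le C_2(\alpha^2 + \|w\|^2)$, which reduce $Hes(p_N;u,v) \ge 0$ to positive semidefiniteness of the two-variable quadratic form
\begin{equation*}
\bigl[(4N^2+(8r-2)N)m - C_2\bigr]\alpha^2 - 4NC_1|\alpha|\|w\| + (2Nm - C_2)\|w\|^2
\end{equation*}
in $(|\alpha|, \|w\|)$. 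Both diagonal coefficients are positive for $N > C_2/(2m)$, and the determinant condition amounts to $8N^3m^2(1+o(1)) \ge 4N^2 C_1^2$, which holds once $N$ exceeds a threshold of order $C_1^2/m^2$.

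The main obstacle is really just bookkeeping: the four-term Hessian expansion and the two uses of Euler's identity must be organized so that the dominant $N^2\alpha^2 p(u)$ and $N\|w\|^2 p(u)$ contributions are clearly isolated and absorb every cross term. The positive definiteness hypothesis is essential — if $p(u_0) = 0$ for some $u_0 \in S^{n-1}$ then $m = 0$ and the absorption fails at $u_0$, so this scheme cannot recover convexity of $s^N p$ by taking $N$ large.
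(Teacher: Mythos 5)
Your proof is correct, and it takes a genuinely different route from the paper. The paper normalizes so that the base point is $e_1$ (by rotation invariance of $\sum x_j^2$), restricts to a two-dimensional slice, and then invokes Proposition 4.4 (the Lorch/Krein criterion identifying convexity of a pd form with convexity of its normalized $1/(2r)$-th root), reducing the whole problem to a single Taylor expansion of $(1+t^2)^{N/(2N+2r)}(1+\alpha t+\tfrac12\beta t^2+\cdots)^{1/(2N+2r)}$ at $t=0$ and a one-line inequality $N>(T^2+U)/2$, where $T,U$ are uniform bounds for $|p^{-1}\nabla_u p|$ and $|p^{-1}\nabla^2_u p|$ on the sphere. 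You instead work directly with the Hessian definition: you compute $\nabla^2(s^N p)$ by the product rule, decompose $v=\alpha u+w$ with $w\perp u$ and $\|u\|=1$, apply Euler's identity to kill the $\langle u,\nabla p(u)\rangle$ term, and then absorb the bounded cross term $4N\alpha\langle w,\nabla p(u)\rangle$ and the $O(1)$ term $Hes(p;u,v)$ into the dominant contributions $[4N^2+(8r-2)N]\alpha^2 p(u)$ and $2N\|w\|^2 p(u)$ via Cauchy--Schwarz and the determinant criterion for a $2\times 2$ quadratic. The paper's route is shorter precisely because it offloads the multivariable reduction onto Proposition 4.4, which it has already established; yours is more elementary and self-contained, not relying on that nontrivial characterization, at the cost of more bookkeeping and a slightly different (but still effective, $O(C_1^2/m^2)$) threshold for $N$. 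Both arguments hinge on the same compactness input — $p$ bounded away from zero on $S^{n-1}$, with uniform control on $\nabla p$ and $\nabla^2 p$ — and both correctly identify why positive definiteness is essential, since $m>0$ is exactly what lets the $N^2\alpha^2$ and $N\|w\|^2$ terms dominate.
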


\begin{proof} 
Since $p$ is pd, it is bounded away from 0 on $S^{n-1}$ and so there are
uniform upper bounds $T$ for $ |p(x)^{-1}\nabla_u(p)(x)|$ and   $U$
for $|p(x)^{-1} \nabla^2_u(p)(x)|$,  for $x, u \in
S^{n-1}$. Since $\sum x_i^2$ is rotation-invariant, once again it suffices to
show that $p_N$ is  convex at $(1,0,\dots,0)$, given $x_3 = \cdots = x_n
= 0$. We claim that if $N > (T^2 + U)/2$, then $p_N$
 is convex. By Prop.\!\! 4.4, it suffices to show that
$p^{1/(2N+2r)}_N(1,t,0,\dots,0)$ is convex at $t=0$. 
Writing down the relevant Taylor series, this becomes
 \begin{equation}
 (1 + t^2)^{N/(2N+2r)} (1 + \al t + \tfrac 12 \be t^2 + \dots )^{1/(2N+2r)},
 \end{equation}
 where $|\al| \le T$ and $|\be|\le U$. By expanding the product, a
 standard computation shows that the second derivative at $t=0$ is
 \begin{equation}
\frac {N}{N+r} + \frac 1{2N+2r}\cdot b - \frac
{2N+2r-1}{(2N+2r)^2}\cdot a^2
\ge  \frac 1{2N+2r} \left(2N - U - T^2 \right) \ge 0.
 \end{equation}
\end{proof}

Greg Blekherman pointed out to the author's chagrin in Banff that
Theorem 4.6
 follows from \cite[Thm.3.12]{R2}: if $p$ is pd, then there exists $N$
so that $p_N \in Q_{n,2r+2N}$. This was used in \cite{R2} to show that
$P_N\in \Sigma_{n,2r+2N}$; it also implies that $p \in K_{n,2r+2N}$. 
The proof of  \cite[Thm.3.12]{R2} is much less elementary.

We conclude this section with a computational illustration of the
proof of Theorem 4.6.
If $a > 0$, then $x^2 + a y^2$ is convex, but if $r \ge 1$ and
 $(x^2 + y^2)^r(x^2 + a x^2) \in K_{2,2r+2}$ for all $a>0$, then  by (P2),
  $x^2(x^2 + y^2)^r$ would be convex, violating Prop.\!\! 4.1. 

\begin{theorem}
\begin{equation}
(x^2 + y^2)^r(x^2 + a x^2) \in K_{2,2r+2} \iff a + 1/a \le 8r + 18 + 8/r.
\end{equation}
\end{theorem}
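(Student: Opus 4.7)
The plan is to invoke Proposition~4.4 to reduce the question to one-variable convexity, carry out a logarithmic-derivative simplification, and finally analyze the nonnegativity of a quadratic polynomial in $s = t^2$.

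Let $p(x,y) = (x^2+y^2)^r(x^2+ay^2)$ (treating the stated $ax^2$ as a typo for $ay^2$). We may assume $a > 0$: when $a \le 0$, $p(0,1) \le 0$, so $p$ is not psd, hence not convex, while the right-hand side is vacuous or undefined. Since $p(1,0) = 1 > 0$, Proposition~4.4 says that $p \in K_{2,2r+2}$ iff $g(t) := q(t)^{1/(2r+2)}$ is convex on $\rr$, where $q(t) := p(1,t) = (1+t^2)^r(1+at^2)$. Differentiating twice reduces $g'' \ge 0$ to the pointwise inequality
\[
\Phi(t) := 2(r+1)\,q(t)q''(t) - (2r+1)\,q'(t)^2 \ge 0, \qquad t \in \rr.
\]

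For the second step, I would simplify $\Phi$ via logarithmic differentiation. Setting $A(t) = 2rt/(1+t^2)$ and $B(t) = 2at/(1+at^2)$, one has $q'/q = A+B$ and $q''/q = A' + B' + (A+B)^2$, so
\[
\Phi(t) = q(t)^2 \bigl[\,2(r+1)(A'(t)+B'(t)) + (A(t)+B(t))^2\,\bigr].
\]
Multiplying the bracket by $(1+t^2)^2(1+at^2)^2/4$ and substituting $s = t^2$ should yield a polynomial of degree at most $2$ in $s$ (the cubic-in-$s$ terms cancel), of the form
\[
Q(s) = A_0 + B_0\,s + C_0\,s^2,
\]
with $A_0 = (r+1)(r+a)$, $C_0 = a(r+1)(ar+1)$, and $B_0 = 2a(r+1)^2 - r(a-1)^2$. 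Both $A_0, C_0 > 0$.

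It remains to determine when $Q(s) \ge 0$ for all $s \ge 0$. Since the product of roots of $Q$ equals $A_0/C_0 > 0$, the two roots (when real) share a sign, and $Q \ge 0$ on $[0,\infty)$ iff either $B_0 \ge 0$ or $B_0^2 \le 4A_0 C_0$. The step I expect to be the main obstacle is the identity
\[
4 A_0 C_0 - B_0^2 = (a-1)^2 \bigl[\,8ar(r+1)^2 - r^2(a-1)^2\,\bigr].
\]
This comes out cleanly after writing $(r+a)(ar+1) = a(r^2+1) + r(a^2+1)$ and grouping $(r+a)(ar+1) - a(r+1)^2 + r(a-1)^2 = 2r(a-1)^2$, from which $4A_0C_0 - B_0^2$ collapses to the displayed form. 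Granted the identity, $4A_0 C_0 \ge B_0^2$ is equivalent to $8(r+1)^2/r \ge (a-1)^2/a = a + 1/a - 2$, i.e.\ $a + 1/a \le 2 + 8(r+1)^2/r = 8r + 18 + 8/r$. The simpler condition $B_0 \ge 0$ gives the strictly smaller bound $a + 1/a \le 2r + 6 + 2/r$ and is therefore subsumed, yielding the claimed equivalence.
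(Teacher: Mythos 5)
Your proof is correct, and it arrives at the same quadratic that the paper extracts, but by a genuinely different computational route.

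The paper works directly with the two-variable Hessian: it computes the Hessian determinant, observes that it factors as $4(2r+1)(x^2+y^2)^{2r-2}q(x,y)$ for an explicit binary quartic $q$ (whose coefficients match your $A_0,B_0,C_0$), and then exhibits the completed-square identity
\[
4A_0\,q(x,y)=\bigl(2A_0x^2+B_0y^2\bigr)^2+4ar^2(a-1)^2\bigl[(8r+18+8/r)-(a+1/a)\bigr]y^4,
\]
reading off positive semidefiniteness from the sign of the second summand, and finds an explicit zero of the first square to establish sharpness. You instead invoke Proposition~4.4 to reduce to one-variable convexity of $q(t)^{1/(2r+2)}$ and then use logarithmic differentiation; the multiplicative structure $q=(1+t^2)^r(1+at^2)$ makes $q'/q=A+B$ trivial and avoids expanding $(x^2+y^2)^r$, which is a mild gain in transparency. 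The discriminant analysis you do at the end is of course the same algebra as the paper's completion of the square: your identity $4A_0C_0-B_0^2=(a-1)^2\bigl[8ar(r+1)^2-r^2(a-1)^2\bigr]$, which I checked, is exactly $4ar^2(a-1)^2\bigl[(8r+18+8/r)-(a+1/a)\bigr]$. One small remark: for $a<0$ the polynomial $p$ fails even to be psd while $a+1/a\le -2<8r+18+8/r$ still holds, so the stated equivalence really does require the (implicit) restriction $a>0$; you state this correctly, though the RHS is "satisfied" rather than "vacuous" there. Also note the $B_0\ge 0$ branch is subsumed by the discriminant condition precisely because $2r+6+2/r\le 8r+18+8/r$, which you correctly use. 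Overall, a sound and slightly cleaner derivation of the same endgame.
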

\begin{proof}
Let $p(x,y) = (x^2 + y^2)^r(x^2 + a x^2)$. A computation shows that
\begin{equation}  
\begin{gathered}
\frac{\partial^2p}{\partial x^2}\frac{\partial^2p}{\partial y^2} -
 \left(\frac{\partial^2p}{\partial x\partial y}\right)^2 =
4(2r+1)(x^2+y^2)^{2r-2} q(x,y), \quad \text{where} \quad 
q(x,y) = \\ (1 + r) (a + r)x^4 + ( 2 a - r + 6 a r - a^2 r + 2 a
r^2)x^2y^2 +  a (1 + r) (1 + a r)y^4.
\end{gathered}
\end{equation}
Another computation shows that
\begin{equation}\label{E:mess}  
\begin{gathered}
4(1+r)(a+r)q(x,y)\\ = (2(1+r)(a+r)x^2 + ( 2 a - r + 6 a r - a^2 r + 2 a
r^2)y^2)^2 \\ + 4a r^2(a-1)^2\bigl((8r + 18 + 8/r)-(a+1/a)\bigr)y^4.
\end{gathered}
\end{equation}
If $a + 1/a \le 8r + 18 + 8/r$, then \eqref{E:mess} shows that $q$ 
 is psd. Suppose $a + 1/a >  8r + 18 + 8/r$. Observe that
$ 2 a - r + 6 a r - a^2 r + 2 ar^2 \ge
0$ if and only if $(a + 1/a) \le  2r + 6 + 2/r$, so in this case, $ 2
a - r + 6 a r - a^2 r + 2 ar^2 < 0$ and we can choose $(x,y) = (x_0,y_0) \neq
(0,0)$ to make the first square in \eqref{E:mess}  equal to zero. It then
follows that $4(1+r)(a+r)q(x_0,y_0)< 0$.
\end{proof}
In particular, $(x^2  + y^2)(x^2 + a y^2) \in K_{2,4} \iff
 17 - 12 \sqrt 2 \le a \le 17 +  12 \sqrt 2$.

\section{$\mathcal B_{2,4}$: binary quartic blenders}
 
In view of Prop.\!\! 2.5, the simplest non-trivial opportunity to
classify  blenders comes with the binary quartics. Throughout this section, we
 choose a sign for $\pm B \in \mathcal B_{2,4}$ and assume that $B
\subset P_{2,4}$. We shall show that $\mathcal B_{2,4}$
is a one-parameter nested family of blenders increasing from $Q_{2,4}$ to
$P_{2,4}$.  It is also convenient to let $Z_{2,4}$ denote the set of $p \in
P_{2,4}$ which are neither pd not a 4th power; if $p \in Z_{2,4}$,
then $p = \ell^2 h$, where $\ell$ is linear and $h$ is a psd quadratic
form relatively prime to $\ell$.

\begin{lemma}
If  $B \in \mathcal B_{2,4}$ and $0 \neq p \in B \cap Z_{2,4}$, then  $B =P_{2,4}$.
\end{lemma}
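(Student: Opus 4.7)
The plan is to classify $0 \ne p \in B \cap Z_{2,4}$ up to the equivalence $\sim$, extract from this the single form $x^2y^2 \in B$, and then combine (P1)--(P3) with the classical fact $P_{2,4} = \Sigma_{2,4}$ to conclude $P_{2,4} \subseteq B$. Write $p = \ell^2 h$ with $\ell$ linear and $h$ a psd binary quadratic coprime to $\ell$. A linear change of variables brings $\ell$ to $x$, after which $h(0,1)>0$ by coprimality and psd-ness; then $h$ is either the square of a linear form independent of $x$ (giving $p \sim x^2y^2$ after a rescaling of $y$) or positive definite. In the pd case, completing the square in $h$ via $y \mapsto y + \la x$ preserves the factor $x$ and, after a diagonal rescaling, yields $p \sim x^4 + x^2y^2$.

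To eliminate the second case, apply $M_\ep = \mathrm{diag}(\ep, 1)$ to $x^4 + x^2y^2 \in B$: by (P3) we obtain $\ep^4 x^4 + \ep^2 x^2y^2 \in B$, and rescaling by the positive constant $\ep^{-2}$ yields $\ep^2 x^4 + x^2y^2 \in B$; letting $\ep \to 0$ and invoking (P2) produces $x^2y^2 \in B$. Thus in either case $x^2y^2 \in B$. Applying (P3) then puts $(\al x + \be y)^2(\ga x + \de y)^2 \in B$ for all real $\al,\be,\ga,\de$, so $f^2 \in B$ for every quadratic $f$ that factors over $\rr$. If instead $f$ is pd, then $f \sim x^2 + y^2$, and the identity \eqref{E:h22} shows $(x^2+y^2)^2 \in Q_{2,4} \subseteq B$ (Prop.\ 2.5); hence $f^2 \in B$ for every $f \in F_{2,2}$.

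Finally, $P_{2,4} = \Sigma_{2,4}$ (Hilbert), so every $p \in P_{2,4}$ is of the form $f_1^2 + f_2^2$ with $f_i \in F_{2,2}$; applying the previous step to each summand and then (P1) gives $p \in B$. Combined with the standing convention $B \subseteq P_{2,4}$, this forces $B = P_{2,4}$. The step I expect to be most delicate is the limit argument reducing the pd case to $x^2y^2 \in B$, since it relies essentially on both the singular-matrix axiom (P3) and the closure axiom (P2); the remainder is a routine reduction of binary quadratics together with results already established earlier in the paper.
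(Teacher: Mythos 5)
Your proposal is correct and follows essentially the same route as the paper: reduce $p$ modulo $\sim$ to the form $x^2(dx^2+cy^2)$, extract $x^2y^2 \in B$ via a diagonal scaling and the closure axiom (P2), pass to $\ell_1^2\ell_2^2 \in B$ by (P3), and conclude $P_{2,4}\subseteq B$ using Hilbert's $P_{2,4}=\Sigma_{2,4}$ together with \eqref{E:h22}. The paper compresses your last two steps into a single citation of Theorem~3.10, whose proof contains precisely the factorization and square-summing argument you give, so this is not a genuinely different path.

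One small imprecision: after normalizing $\ell$ to $x$ you assert that $h$ is ``either the square of a linear form independent of $x$ or positive definite.'' This dichotomy is not exhaustive — for instance $h=(x+y)^2$ is psd, coprime to $x$, not pd, yet its underlying linear form involves $x$. The fix is routine and is implicitly what you do in the pd case: apply the shear $y\mapsto y+\la x$ (which preserves the factor $x$) to kill the cross term in $h$, so that $h\sim dx^2+cy^2$ with $d\ge 0$, $c>0$; then $d=0$ gives $p\sim x^2y^2$ directly, while $d>0$ is your pd case and is handled by the scaling-and-limit argument. Also a small side remark: the scaling step uses only invertible diagonal matrices together with (P2), so the singular-matrix part of (P3) is not actually invoked there (it enters earlier, via Lemma~1.1(iii) in the proof of Prop.~2.5, which you use to get $Q_{2,4}\subseteq B$).
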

\begin{proof}
We have $p \sim q$, where 
$q(x,y) = x^2(ax^2 + 2bxy + cy^2) \in B$,  $ac - b^2\ge 0$ and $c > 0$. But
\begin{equation}
 x^2(ax^2 + 2bxy + cy^2) =
 x^2\bigl( \bigl(\tfrac{ac - b^2}c\bigr) x^2 + c\bigl(\tfrac bc x + y
 \bigr)^2\bigr) \sim x^2(d x^2 + c y^2),
\end{equation}
and $d \ge 0$. Next, $(x,y) \mapsto (\ep x, \ep^{-1}y)$
shows that $\ep^2 dx^4 + c x^2y^2 \in B$, so  $x^2y^2 \in  B$ by (P2) and
$\ell_1^2\ell_2^2 \in B$ by (P3). Thus, $W_{2,\{(1,2), (1,2)\}} = P_{2,4}
\subseteq B$ by Theorem 3.10.
\end{proof}

This lemma illustrates one difference between
blenders and orbitopes. If $G = SO(2)$ and $p(x,y) = x^2(x^2+y^2)$,
then the image of $p$ under the action of $G$ will be $\{(\cos t x + \sin t
y)^2(x^2+y^2)\}$, so even taking scalar multiples into account, the
convex hull will not contain the 4th powers or the square of an
indefinite quadratic.

A binary quartic of particular importance is
\begin{equation}\label{E:flam}
f_{\la}(x,y) := x^4 + 6\la x^2y^2 + y^4;
\end{equation}
we also define
\begin{equation}
g_{\la}(x,y):= f_{\la}(x+y,x-y) = (2+6\la)x^4 + (12 - 12\la)x^2y^2 + (2+6\la)y^4. 
\end{equation}
We shall need two special fractional linear transformations. Let
\begin{equation}\label{E:TU}
T(z):= \frac{1-z}{1+3z}, \qquad U(z) := - \frac{1+3z}{3-3z}.
\end{equation}
It follows from \eqref{E:flam} that $g_{\la} = (2+6\la)f_{T(\la)}$, hence for
$\la \neq -\frac 13$, $f_{\la} \sim f_{T(\la)}$. Note that  $T(T(z))
= z$, $T(0) = 1$, $T(\frac 13) = \frac 13$, and 
$T(-\frac 13) = \infty$ (corresponding to $(x^2-y^2)^2 \sim x^2y^2$);
$T$ gives a 1-1
decreasing map between $[\frac 13,\infty)$ and $(-\frac 13,\frac 13]$. 
We also have
\begin{equation}\label{E:apo}
[f_{\la},g_{\mu}] = (2+6\mu) + \la(12-12\mu) + (2+6\mu) =
4(1+3\la+3\mu-3\la\mu).
\end{equation}
Note that  $U(U(z)) = z$, $U(0) = -\tfrac 13$, $U$ gives a
1-1 decreasing map from $[-\frac 13,0]$ to itself, and
\begin{equation}\label{E:fglam}
[f_{\la},g_{U(\la)+\tau}] = 12(1-\la)\tau. 
\end{equation}
It follows from \eqref{E:fglam} that $[f_{\la},g_{U(\la)}] = 0$, and
if $\la < 1$ and $\mu < U(\la)$, then $[f_{\la},g_{\mu}] < 0$.

It is easy to see directly from \eqref{E:flam} that $f_{\la}$ is psd iff $\la
\in [-\frac 13,\infty)$, and pd iff $\la \in (-\frac 13,\infty)$, and
from (P3) that, if $B \in \mathcal B_{2,4}$, then
\begin{equation}
f_{\la} \in B \iff f_{T(\la)} \in B.
\end{equation}
By (P1), if $-\frac 13 < \la \le \frac 13$, then $f_{\la}
\in B$ implies that $f_{\mu} \in B$ for $\mu \in [\la,T(\la)]$.  

It is classically known that a ``general'' binary quartic can be put
into the shape $f_{\la}$ for some $\la$ after an invertible linear
transformation. However there is no guarantee that the coefficients of
the transformation are real, and the result is not universal: $x^4
\not \sim f_{\la}$. The following first appeared in \cite[Thm.6]{PR}.

\begin{proposition}
 If $p \in P_{2,4}$ is pd, then $p \sim  f_{\la}$ for some $\la \in
(-\frac 13,\frac 13]$.
\end{proposition}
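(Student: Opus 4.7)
The idea is to reduce $p$ to canonical form by exploiting two classical facts about pd binary quartics: they factor over $\rr$ into pd binary quadratic factors, and any two pd binary quadratic forms can be simultaneously diagonalized.

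First I would observe that because $p$ is pd, $p(x,1)$ is a degree-$4$ univariate polynomial with no real roots (the leading coefficient $p(1,0) > 0$ is nonzero). Hence $p(x,1)$ factors over $\rr$ as a product of two real quadratics, each with negative discriminant. Rehomogenizing and absorbing a positive scalar yields
\begin{equation}
p(x,y) = q_1(x,y)\,q_2(x,y),
\end{equation}
where both $q_i$ are pd binary quadratic forms.

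Next, by the classical simultaneous diagonalization of a pair of pd quadratic forms (treat $q_1$ as an inner product on $\rr^2$; then $q_2$ is self-adjoint and has an orthonormal eigenbasis), there is an invertible $M_1 \in Mat_2(\rr)$ so that
\begin{equation}
(q_1 \circ M_1)(x,y) = x^2+y^2, \qquad (q_2\circ M_1)(x,y) = \mu_1 x^2 + \mu_2 y^2
\end{equation}
for some $\mu_1,\mu_2 > 0$. Then $p \sim (x^2+y^2)(\mu_1 x^2 + \mu_2 y^2)$. Next apply a diagonal scaling $(x,y) \mapsto (tx,sy)$ with $t,s>0$ chosen so that the coefficients of $x^4$ and $y^4$ agree, namely $\mu_1 t^4 = \mu_2 s^4$. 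Expanding and dividing by $\mu_1 t^4$ shows that the resulting form is a positive multiple of $f_{\la}$ with
\begin{equation}
6\la = \frac{\mu_1+\mu_2}{\sqrt{\mu_1\mu_2}} \ge 2
\end{equation}
by AM-GM. Thus $p \sim f_{\la}$ for some $\la \ge \tfrac 13$.

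Finally, if $\la = \tfrac 13$ we are done. If $\la > \tfrac 13$, then, as noted just before the proposition, the substitution $(x,y)\mapsto (x+y,x-y)$ shows $f_{\la} \sim f_{T(\la)}$; since $T$ maps $[\tfrac13,\infty)$ decreasingly onto $(-\tfrac 13,\tfrac 13]$, we get $p \sim f_{T(\la)}$ with $T(\la) \in (-\tfrac 13, \tfrac 13)$, and the proof is complete. The only mild subtlety is the factorization step: one must remember that an irreducible real quartic with no real roots still factors over $\rr$ into two pd quadratics, so the decomposition $p = q_1 q_2$ is always available; every other step is a routine linear-algebra manipulation.
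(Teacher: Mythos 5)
Your proof is correct and follows essentially the same route as the paper's: factor $p$ into a product of two pd binary quadratics, bring the pair to the normalized form $(x^2+y^2)(\mu_1 x^2+\mu_2 y^2)$ by simultaneous diagonalization, diagonally rescale to reach $f_\la$ with $\la \ge \tfrac 13$, and then apply $T$ to land in $(-\tfrac13,\tfrac13]$. The only cosmetic difference is that the paper treats the perfect-square case $p = g^2$ separately, whereas your AM-GM step ($\la = \tfrac13$ exactly when $\mu_1 = \mu_2$) absorbs it automatically.
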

\begin{proof}
Suppose first $p = g^2$. Then $g$ is pd, so $g
\sim x^2+y^2$ and $p \sim f_{\frac 13}$.  

If $p$ is not a perfect square, then it is a product of two pd
quadratic forms; we may assume that $p(x,y) = (x^2+y^2)q(x,y)$, with
\begin{equation}
q(x,y) = ax^2 + 2bxy +cy^2.
\end{equation}
A ``rotation of axes'' fixes $x^2+y^2$ and takes $q$ into
$d x^2 + ey^2$ with $d,e > 0$, $d \neq e$, so $p
\sim(x^2+y^2)(dx^2+ey^2)$. Now, 
$(x,y) \mapsto (d^{-1/4}x,e^{-1/4}y)$ gives $p
\sim f_{\mu}$, where $\mu = \frac 16(\ga + \ga^{-1}) > \frac 13$ for $\ga =
\sqrt{d/e}\neq 1$. Thus, $p \sim f_{T(\mu)}$ where $T(\mu) \in
(-\frac 13,\frac 13)$.  
\end{proof}

We need some results from classical algebraic geometry.
Suppose
\begin{equation}
p(x,y) = \sum_{k=0}^4\binom 4k a_k(p) x^{4-k}y^k.
\end{equation} 
The two ``fundamental invariants'' of $p$ are
\begin{equation}
\begin{gathered}
I(p) = a_0(p)a_4(p) - 4a_1(p)a_3(p)+3a_2(p)^2, \\ 
J(p) = \det
\begin{vmatrix}
a_0(p) & a_1(p) & a_2(p) \\
a_1(p) & a_2(p) & a_3(p)\\
a_2(p)& a_3(p) & a_4(p)
\end{vmatrix}.
\end{gathered}
\end{equation} 
(Note $J(p)$  is the determinant of the catalecticant matrix $H_p$.)
We have $I(f_{\la}) = 1 + 3\la^2$ and $J(f_{\la})= \la - \la^3$, but
$I(x^4)=J(x^4)=0$.  
It follows from Prop.\!\! 5.2 that if $p$ is pd, then $I(p) > 0$.
It is  easily
checked that if $q(x,y) = p(ax + by,cx+dy)$, then 
\begin{equation}\label{E:inv}
I(q) = (ad-bc)^4 I(p), \qquad J(q) = (ad - bc)^6 J(p).
\end{equation}
Let
\begin{equation}\label{E:inv2}
K(p) := \frac {J(p)}{I(p)^{3/2}}. 
\end{equation} 
It follows from \eqref{E:inv} and \eqref{E:inv2} that, if $p \sim q$,
then $K(q) = K(p)$. 
In particular, 
\begin{equation}
p \sim f_{\la} \implies K(p) = K(f_{\la}) = \phi(\la):=
\frac{\la-\la^3}{(1+3\la^2)^{3/2}}.    
\end{equation}
\begin{lemma}
If $p$ is pd, then $p \sim f_{\la}$, where $\la$ is the unique
solution  in $(-\frac 13, \frac 13]$ to $K(p) = \phi(\la)$. If $p \in
Z_{2,4}$, then $K(p) = \phi(-\frac 13)$. 
\end{lemma}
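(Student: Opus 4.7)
The plan is to leverage the invariance of $K$ under $\sim$ and then reduce both assertions to elementary one-variable facts about $\phi$.

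First suppose $p$ is pd. By Prop.\! 5.2, there exists $\la \in (-\tfrac 13, \tfrac 13]$ with $p \sim f_\la$. From \eqref{E:inv}, $I$ and $J$ are relative invariants under invertible linear changes of variable, of weights $4$ and $6$ respectively, so by \eqref{E:inv2}, $K = J/I^{3/2}$ is a genuine $\sim$-invariant. Hence $K(p) = K(f_\la) = \phi(\la)$, which gives the existence of the claimed $\la$.

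For uniqueness, I would show $\phi$ is injective on $(-\tfrac 13, \tfrac 13]$ by a direct derivative computation:
\begin{equation}
\phi'(\la) = \frac{(1-3\la^2)(1+3\la^2) - 9\la^2(1-\la^2)}{(1+3\la^2)^{5/2}} = \frac{1 - 9\la^2}{(1+3\la^2)^{5/2}},
\end{equation}
which is strictly positive on $(-\tfrac 13, \tfrac 13)$ and vanishes only at $\la = \tfrac 13$. Thus $\phi$ is strictly increasing on $(-\tfrac 13, \tfrac 13]$, yielding uniqueness of the preimage.

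For the second assertion, let $p \in Z_{2,4}$, so $p = \ell^2 h$ with $\ell$ linear and $h$ a psd quadratic relatively prime to $\ell$. Following the first step of the proof of Lemma 5.1, an invertible change of variables sends $\ell$ to $x$ and diagonalizes $h$, giving $p \sim q$ for $q(x,y) = x^2(dx^2 + cy^2)$ with $d \ge 0$ and $c > 0$. In the notation preceding \eqref{E:inv}, only $a_0(q) = d$ and $a_2(q) = c/6$ are nonzero, so a direct calculation gives $I(q) = c^2/12$ and $J(q) = -c^3/216$ (the top-left $2\times 2$ cofactor in $J$ vanishes because $a_3=a_4=0$), hence
\begin{equation}
K(p) = K(q) = \frac{-c^3/216}{(c^2/12)^{3/2}} = -\frac{\sqrt 3}{9} = \phi(-\tfrac 13).
\end{equation}

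The only step with any substance is verifying that $\phi' > 0$ on the open interval; everything else is either cited from earlier in the paper or is a bookkeeping computation. I do not anticipate a genuine obstacle, though one must be careful in the $Z_{2,4}$ computation to use only the \emph{invertible} reduction to $x^2(dx^2+cy^2)$ (and not the further singular reduction to $x^2 y^2$ used in Lemma 5.1), since $K$ is an invariant only under invertible changes of variable.
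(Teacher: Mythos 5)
Your proposal is correct and follows essentially the same route as the paper: cite Prop.\! 5.2 for the normal form, observe from \eqref{E:inv} and \eqref{E:inv2} that $K$ is a $\sim$-invariant, show $\phi' = (1-9\la^2)(1+3\la^2)^{-5/2} > 0$ on the open interval for uniqueness, and compute $I,J$ on the Lemma 5.1 normal form of a $Z_{2,4}$ element. (Your value $K(q) = -\sqrt 3/9 = -3^{-3/2}$ is in fact the correct one; the paper's printed $3^{-3/2}$ has a dropped minus sign, and your closing caution about using only the invertible reduction is well taken.)
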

\begin{proof}
By Proposition 5.2, $p \sim f_{\la}$ for some $\la \in (-\frac 13,
\frac 13]$. A routine computation shows that $f'(\la) =
(1-9\la^2)(1+3\la^2)^{-5/2}$ is positive on $(-\frac 13, \frac 13)$,
hence $\phi$ is strictly increasing. By Lemma 5.1, if $p \in Z_{2,4}$,
then $p \sim q$, where $q(x,y) = dx^4 + 6e x^2y^2$ for some $e > 0$. 
Since $I(q) =3e^2$ and $J(q) = -e^3$, $K(p) = K(q) = 3^{-3/2} = \phi(-\frac
13)$. 
\end{proof}

\begin{theorem}
Suppose $r,s \in [-\frac 13,0]$, and suppose $1+3r+3s-3rs = 0$; that
is, $s = U(r)$. If $ p
\in [[f_r]]$ and $q \in [[f_s]]$, then $[p,q] \ge 0$.
\end{theorem}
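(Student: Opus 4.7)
The plan is to reduce the statement to a single polynomial inequality in the four entries of one $2\times 2$ matrix, and then verify that inequality via an eigendecomposition together with the AM--GM inequality.

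Writing $p = f_r \circ M$ and $q = f_s \circ N$, the contravariance identity (Prop.\!\! 2.6) gives $[p,q] = [f_r, f_s \circ (NM^t)]$, and since $NM^t$ ranges over all of $Mat_2(\rr)$ as $M,N$ do, it suffices to prove $[f_r, f_s \circ K] \geq 0$ for every $K = \begin{pmatrix} a & b \\ c & d\end{pmatrix} \in Mat_2(\rr)$. Expanding $f_s \circ K$ and pairing termwise (or, equivalently, applying Prop.\!\! 2.9(i) with the directional-derivative operators $L_1 = a\partial_1 + b\partial_2$ and $L_2 = c\partial_1 + d\partial_2$) yields the explicit formula
\begin{equation*}
F(a,b,c,d) := [f_r, f_s \circ K] = f_r(a,b) + f_r(c,d) + 6s(a^2c^2 + b^2d^2) + 6rs(a^2d^2 + 4abcd + b^2c^2).
\end{equation*}

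The central idea is to view $F$ as a quadratic form $\mathbf{v}^t M_{r,s}\mathbf{v}$ in $\mathbf{v} = (a^2,b^2,c^2,d^2)^t$ plus a single residual term $24rs\cdot abcd$. Here $M_{r,s}$ is the $4\times 4$ symmetric matrix with diagonal entries $1$ and off-diagonal entries $3r$ on the index pairs $\{1,2\},\{3,4\}$, $3s$ on $\{1,3\},\{2,4\}$, and $3rs$ on $\{1,4\},\{2,3\}$. This matrix commutes with the Klein four-group of double-transposition permutations of the indices, so it is simultaneously diagonalized by the common $\pm 1$-eigenvectors $e_1 = (1,1,1,1)$, $e_2 = (1,-1,1,-1)$, $e_3 = (1,1,-1,-1)$, $e_4 = (1,-1,-1,1)$; the corresponding eigenvalues are $1 + 3r + 3s + 3rs$, $1 - 3r + 3s - 3rs$, $1 + 3r - 3s - 3rs$, and $1 - 3r - 3s + 3rs$.

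Under the hypothesis $1 + 3r + 3s - 3rs = 0$, these four eigenvalues simplify respectively to $6rs$, $-6r$, $-6s$, and $2$, and the hypothesis $r,s \in [-\tfrac{1}{3},0]$ makes them all non-negative. Writing $\alpha_i = \mathbf{v}\cdot e_i$ (so in particular $\alpha_1 = a^2+b^2+c^2+d^2$) and using $\|e_i\|^2 = 4$, I would then assemble the decomposition
\begin{equation*}
F = \tfrac{3rs}{2}\bigl(\alpha_1^2 + 16\,abcd\bigr) + \tfrac{1}{2}\bigl(-3r\,\alpha_2^2 - 3s\,\alpha_3^2 + \alpha_4^2\bigr).
\end{equation*}
The second summand is manifestly $\geq 0$. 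The first summand is the main obstacle, since the leftover $24rs\,abcd$ is not polynomial in $(a^2,b^2,c^2,d^2)$; the trick is that AM--GM on $(a^2,b^2,c^2,d^2)$ gives $\alpha_1^2 \geq 16|abcd| \geq -16\,abcd$, so $\alpha_1^2 + 16\,abcd \geq 0$, and since $rs \geq 0$ the first summand is non-negative as well. This yields $F \geq 0$, and the theorem follows.
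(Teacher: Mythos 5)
Your proof is correct, and the opening steps coincide exactly with the paper's: reduce via contravariance (Prop.~2.6) to showing $[f_r, f_s\circ K]\geq 0$ for arbitrary $K$, and compute the same explicit formula $\Psi(a,b,c,d;r,s)$. Where you diverge is in certifying nonnegativity. The paper simply exhibits an ad hoc nonnegative-combination-of-squares identity for $2(1-r)\Psi$, using the squares of $a^2+b^2-c^2-d^2$, $a^2+c^2-b^2-d^2$, $a^2+d^2-b^2-c^2$, and $(ab+cd)$ with coefficients that are visibly nonnegative on $[-\frac13,0]$. You instead isolate the part of $\Psi$ that is a quadratic form in $\mathbf{v}=(a^2,b^2,c^2,d^2)$, note that the resulting $4\times 4$ matrix is invariant under the Klein four-group of double transpositions and is therefore diagonalized by the $\pm1$-vectors $e_i$, and compute that under $1+3r+3s-3rs=0$ the eigenvalues are $6rs,-6r,-6s,2$, all nonnegative for $r,s\in[-\frac13,0]$; the leftover $24rs\,abcd$ is then folded into the $\alpha_1^2$ term and disposed of by AM--GM. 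Both arguments are built from essentially the same linear combinations (the paper's squared linear forms are exactly your $\alpha_3,\alpha_2,\alpha_4$), but your route is more systematic: it explains where the decomposition comes from and makes the role of the hypothesis transparent, at the modest cost of replacing a purely polynomial identity by one application of AM--GM (so the resulting certificate is not literally a sum of squares times a positive factor, as the paper's is). Both are valid; yours is arguably the more illuminating derivation, the paper's the more self-contained identity.
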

\begin{proof}
Suppose $p = f_r \circ M_1$ and $q = f_s \circ M_2$. Then
\begin{equation}
[p,q] = [f_r\circ M_1,f_s \circ M_2] = [f_r, f_s \circ M_2M_1^t],
\end{equation} 
hence it suffices to show that for all $a,b,c,d$,
\begin{equation}
\Psi(a,b,c,d;r,s):= [f_r(x,y),f_s(ax+by,cx+dy)] \ge 0 
\end{equation}
A calculation shows that 
\begin{equation}
\begin{gathered}
\Psi(a,b,c,d;r,s) = a^4+b^4+c^4+d^4 + \\ 6r(a^2b^2 + c^2d^2) +
6s(a^2c^2+b^2d^2) + 6rs(a^2d^2 + 4abcd + b^2c^2).
\end{gathered}
\end{equation}
When $s = U(r)$, a  sos expression can be found: 
\begin{equation}
\begin{gathered}
2(1-r)\Psi(a,b,c,d;r,U(r)) = (1+r)(1+3r)(a^2+b^2-c^2-d^2)^2 \\
 -4 r (a^2 + c^2 - b^2 - d^2)^2 + (1 + r) (1 - 
    3 r) (a^2 + d^2 - b^2 - c^2)^2 \\ - 8 r (1 + 3 r) (a b + c d)^2, 
\end{gathered}
\end{equation}
which is non-negative when $r \in [-\frac 13, 0]$.
Note that $\Psi(1,1,1,-1;r,U(r)) = 0$; reaffirming that $[f_r,g_{U(r)}] = 0$.
\end{proof}

\begin{theorem}
Suppose $r,s \in [-\frac 13, 0]$. If $s \ge U(r)$, $p \in [[f_r]]$ and
$q \in [[f_s]]$, then $[p,q] \ge 0$. If $s < U(r)$, then there exist  $p
\in [[f_r]]$ and $q \in [[f_s]]$ so that $[p,q] < 0$.
\end{theorem}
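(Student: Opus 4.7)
The plan is to reduce, via contravariance, to checking an explicit scalar inequality, and then to exploit the \emph{affine} dependence of that inequality on $s$ so that two endpoint checks suffice.

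First, by Proposition 2.6, writing $p = f_r \circ M_1$ and $q = f_s \circ M_2$ gives $[p,q] = [f_r, f_s \circ M]$, where $M = M_2 M_1^t = \begin{pmatrix} a & b \\ c & d \end{pmatrix}$. So it is enough to show that $\Psi(a,b,c,d;r,s) := [f_r, f_s \circ M] \ge 0$ for every real $a,b,c,d$. For this I would use the explicit formula for $\Psi$ recorded inside the proof of Theorem 5.5, namely
\begin{equation*}
\Psi(a,b,c,d;r,s) = a^4+b^4+c^4+d^4 + 6r(a^2b^2+c^2d^2) + 6s(a^2c^2+b^2d^2) + 6rs(a^2d^2+4abcd+b^2c^2).
\end{equation*}
The key observation is that $\Psi$ is affine in $s$ for each fixed $(a,b,c,d,r)$. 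Hence, non-negativity on the interval $s \in [U(r), 0]$ follows once I verify $\Psi \ge 0$ at its two endpoints.

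The endpoint $s = U(r)$ is handled by Theorem 5.5 itself, which even supplies an sos certificate. At the other endpoint, $f_0 = x^4+y^4$, so $f_0 \circ M = (ax+by)^4 + (cx+dy)^4$, and Proposition 2.1(iii) gives
\begin{equation*}
\Psi(a,b,c,d;r,0) = [f_r,(ax+by)^4] + [f_r,(cx+dy)^4] = f_r(a,b) + f_r(c,d),
\end{equation*}
which is non-negative since $f_r$ is psd throughout $r \in [-\tfrac{1}{3}, 0]$ (direct inspection of \eqref{E:flam}). Linearity in $s$ then yields $\Psi \ge 0$ on all of $[U(r), 0]$, which is the first claim.

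For the second assertion, when $s \in [-\tfrac{1}{3}, U(r))$, I would simply take $p = f_r$ and $q = g_s = f_s(x+y, x-y) \in [[f_s]]$. Then \eqref{E:fglam} with $\tau = s - U(r) < 0$ gives $[p,q] = 12(1-r)\tau < 0$, since $r \le 0 < 1$. This produces the desired counterexample immediately. The only non-mechanical step is spotting the affine dependence of $\Psi$ in $s$ and choosing $s = 0$ as the complementary endpoint; once that is in place, everything is reduced to Theorem 5.5, Proposition 2.1(iii), and the already-computed identity \eqref{E:fglam}, and I do not anticipate any real obstacle.
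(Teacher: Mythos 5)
Your proof is correct, and it takes a mild variant of the paper's argument. Both proofs exploit the fact that $f_s$ (equivalently $\Psi$) depends affinely on $s$, so it suffices to verify the inequality at two endpoints of the $s$-interval and interpolate. Where you differ is in the choice of the second endpoint: the paper writes $f_s$ as a convex combination of $f_{U(r)}$ and $f_{T(U(r))}$ and applies Theorem 5.4 twice (the second application implicitly using $f_{T(U(r))}\sim f_{U(r)}$, so that $f_{T(U(r))}\circ M\in[[f_{U(r)}]]$), whereas you take $s=0$ as the second endpoint and evaluate directly: $\Psi(a,b,c,d;r,0)=f_r(a,b)+f_r(c,d)\ge 0$ since $f_r$ is psd for $r\in[-\tfrac13,0]$. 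Your choice is arguably cleaner at the second endpoint (a one-line psd check rather than a second appeal to Theorem 5.4 routed through the equivalence $f_\mu\sim f_{T(\mu)}$), and it sidesteps the slightly degenerate case $r=0$, where $T(U(0))=\infty$ and the paper's convex-combination framing has to be interpreted with care. The negative-case argument is identical to the paper's: take $p=f_r$, $q=g_s$, and apply \eqref{E:fglam}. One small slip: where you write that ``the endpoint $s=U(r)$ is handled by Theorem 5.5 itself,'' you mean Theorem 5.4, which is the result carrying the sos certificate for $s=U(r)$.
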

\begin{proof}
If $0 \ge s \ge U(r)$, then $s \in [U(r),T(U(r))]$, hence
$f_s$ is a convex combination of $f_{U(r)}$ and  $f_{T(U(r))}$,
and each $f_s \circ M$ is a convex combination of $f_{U(r)}\circ
M$ and  $f_{T(U(r))}\circ M$. By Theorem 5.4,
$[f_r,f_s \circ M]$ is a convex combination of non-negative numbers
and is non-negative. 
If $U(r) \ge s \ge -\frac 13$, then $[f_r,g_s] < 0$ by \eqref{E:fglam}. 
\end{proof}

We now have the tools to analyze $B \in \mathcal B_{2,4}$.
If $Q_{2,4} \subseteq B \subseteq P_{2,4}$, let
\begin{equation}
\Delta(B) = \{ \la \in \rr : f_{\la} \in B \}.
\end{equation}  
\begin{theorem}
If $B \subset F_{2,4}$ is a blender, then $\Delta(B) =
[\tau,T(\tau)]$ for some $\tau \in [-\frac 13,0]$.
\end{theorem}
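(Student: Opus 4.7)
The plan is to show $\De(B)$ is a closed interval in $[-\tfrac13,\infty)$ containing $0$, and then use the fractional linear involution $T$ of \eqref{E:TU} --- under which $f_\la \sim f_{T(\la)}$ for $\la \neq -\tfrac13$ --- to pin the right endpoint at $T(\tau)$ once the left endpoint $\tau$ is known. The first observation is that $\De(B)$ is a closed interval: closedness comes from (P2) and the continuity of $\la \mapsto f_\la$, while convexity follows from the identity $t f_\la + (1-t) f_\mu = f_{t\la + (1-t)\mu}$ for $t \in [0,1]$, which together with (P1) and Lemma 1.1(ii) places any convex combination back in $B$.

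Next I would locate the left endpoint $\tau := \inf \De(B)$. Because $B \subseteq P_{2,4}$ and $f_\la$ is psd iff $\la \ge -\tfrac13$, we get $\De(B) \subseteq [-\tfrac13,\infty)$; because $Q_{2,4} \subseteq B$ (Prop.\ 2.5) and $f_0 = x^4+y^4 \in Q_{2,4}$, we get $0 \in \De(B)$. Hence $\tau \in [-\tfrac13, 0]$, with $f_\tau \in B$ by closedness.

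Then I would identify the right endpoint by cases on $\tau$. If $\tau = -\tfrac13$, then $f_\tau = (x^2-y^2)^2$ lies in $Z_{2,4}$ (psd, not pd, not a fourth power), so Lemma 5.1 forces $B = P_{2,4}$ and hence $\De(B) = [-\tfrac13,\infty)$, consistent with the convention $T(-\tfrac13) = \infty$. If $\tau > -\tfrac13$, the relation $f_\tau \sim f_{T(\tau)}$ --- derived from $g_\tau = (2+6\tau) f_{T(\tau)}$ with $g_\tau = f_\tau \circ M$ for the invertible $M$ implementing $(x,y) \mapsto (x+y, x-y)$ --- together with Lemma 1.1(ii) puts $T(\tau) \in \De(B)$. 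Since $\tau \le 0$ forces $T(\tau) \ge 1$, convexity of $\De(B)$ yields $[\tau, T(\tau)] \subseteq \De(B)$. For the reverse inclusion, if some $\mu > T(\tau)$ satisfied $f_\mu \in B$, then $T(\mu) < \tau$ (as $T$ is a strictly decreasing involution of $(-\tfrac13, \infty)$), and $f_\mu \sim f_{T(\mu)}$ would give $T(\mu) \in \De(B)$, contradicting $\tau = \inf \De(B)$.

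The argument is essentially bookkeeping once the involution $T$ is recognized as the device that folds the two endpoints together and Lemma 5.1 is seen to absorb the singular boundary case $\tau = -\tfrac13$ where $T$ blows up. The one structural observation making this work is the linear-in-$\la$ identity $t f_\la + (1-t) f_\mu = f_{t\la + (1-t)\mu}$, which lets $\la$ serve as a convex coordinate on the Waring family $\{f_\la\}$ inside any blender.
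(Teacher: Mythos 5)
Your proof is correct and follows essentially the same route as the paper's: identify $\tau = \inf\De(B)$, use $f_\tau \sim f_{T(\tau)}$ and convexity to get $[\tau,T(\tau)] \subseteq \De(B)$, and argue by the decreasing involution $T$ that nothing larger than $T(\tau)$ can belong to $\De(B)$. The only cosmetic differences are that you make the interval structure of $\De(B)$ explicit via the linear identity $t f_\la + (1-t)f_\mu = f_{t\la+(1-t)\mu}$, and you absorb the endpoint case $\tau = -\tfrac13$ through Lemma~5.1 rather than by computing $\De(P_{2,4})$ and $\De(Q_{2,4})$ separately as the paper does.
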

\begin{proof}
By (P2), $\Delta(B)$ is a closed interval. 
We have seen that $\Delta(P_{2,4}) = [-\frac 13,\infty)$. Since $Q_{2,4} =
P_{2,4}^* = \Sigma_{2,4}^*$, by \eqref{E:cata}, $f_{\la} \in Q_{2,4}$ if and
only if
$\left( \begin{smallmatrix}
1 & 0 & \la \\
0 & \la  & 0\\
\la & 0  & 1\\
\end{smallmatrix}\right)
$
is psd; that is,  $\Delta(Q_{2,4}) = [0,1]$.
Otherwise, let $\tau = \inf \{ \la : f_{\la} \in B \}$. Since $Q_{2,4}
\subsetneq B \subsetneq  P_{2,4}$, $\tau \in (-\frac 13, 0)$.
By (P2),
$f_{\tau} \in B$ and by (P3), $f_{T(\tau)} \in B$,
and by convexity, $f_{\nu} \in B$ for $\nu \in
[\tau,T(\tau)]$. If $\nu < \tau$, then $f_{\nu}\not \in B$ by
definition. If $\nu > T(\tau)$ and $f_{\nu} \in B$, 
then $f_{T(\nu)} \in B$ and $T(\nu) < T(T(\tau)) = \tau$, a
contradiction.
\end{proof}

Now, for $\tau \in [-\frac 13,0]$, let 
\begin{equation}
B_{\tau}:= \bigcup_{\tau \le \la \le \frac 13} [[f_\la]] = \{p: p \sim
f_{\la}, \tau \le \la \le \tfrac 13 \}  \cup \{(\al
 x +\be y)^4: \al, \be \in \rr \}.
\end{equation}

\begin{theorem}
If $B \in \mathcal B_{2,4}$, then $B =
B_{\tau}$ for some   $\tau \in [-\frac 13, 0]$ and $B_\tau^* = B_{U(\tau)}$.
\end{theorem}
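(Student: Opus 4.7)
My plan is to prove the two claims of Theorem 5.7 in turn. For the identification $B = B_\tau$, Theorem 5.6 supplies $\tau = \inf \Delta(B) \in [-\tfrac 13, 0]$. The inclusion $B_\tau \subseteq B$ is a direct consequence of (P3): for every $\lambda \in [\tau, \tfrac 13] \subseteq \Delta(B)$ we have $f_\lambda \in B$, so $[[f_\lambda]] \subseteq B$; the 4th powers lie in $B$ by Lemma 1.1(iii) and Proposition 2.5. For the reverse inclusion, I would take a nonzero $p \in B$ and split into three cases: (i) a 4th power, trivially in $B_\tau$; (ii) $p \in Z_{2,4}$, which by Lemma 5.1 forces $B = P_{2,4}$ (so $\tau = -\tfrac 13$), and one must verify that $B_{-1/3}$ exhausts $P_{2,4}$ --- here the $\ell^2 h$ forms with $h$ pd are captured by reading $[[f_{-1/3}]]$ as the closed orbit; (iii) $p$ pd, in which case Proposition 5.2 gives a unique $\lambda \in (-\tfrac 13, \tfrac 13]$ with $p \sim f_\lambda$, and $f_\lambda \in B$ forces $\lambda \in \Delta(B)$, so $\lambda \in [\tau, \tfrac 13]$ and $p \in [[f_\lambda]] \subseteq B_\tau$.

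For the duality $B_\tau^* = B_{U(\tau)}$, I would begin by noting that $B_\tau^*$ is a blender (Proposition 2.7) sandwiched between $Q_{2,4}$ and $P_{2,4}$ (by dualizing $Q_{2,4} \subseteq B_\tau \subseteq P_{2,4}$ and using Proposition 3.8). The first part then forces $B_\tau^* = B_\sigma$ for some $\sigma \in [-\tfrac 13, 0]$, and the task reduces to showing $\sigma = U(\tau)$, equivalently $\sigma = \inf\{\lambda : f_\lambda \in B_\tau^*\}$.

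To prove $f_{U(\tau)} \in B_\tau^*$ I would verify $[f_{U(\tau)}, q] \geq 0$ for every $q \in [[f_\mu]]$, $\mu \in [\tau, \tfrac 13]$, splitting at $\mu = 0$. For $\mu \in [\tau, 0]$, apply Theorem 5.5 with $r = U(\tau)$, $s = \mu$; since $s \geq \tau = U(U(\tau)) = U(r)$, the theorem yields non-negativity. For $\mu \in [0, \tfrac 13]$, the convex combination $f_\mu = (1 - 3\mu) f_0 + 3\mu f_{1/3}$ together with $f_0, f_{1/3} \in Q_{2,4}$ puts $f_\mu$, and hence $q = f_\mu \circ M$, in $Q_{2,4}$; non-negativity then follows from $f_{U(\tau)} \in P_{2,4} = Q_{2,4}^*$ (Proposition 3.8). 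Four-th powers are subsumed since they lie in $Q_{2,4}$.

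For the converse direction, if $\lambda \in [-\tfrac 13, U(\tau))$, the strict decreasingness of $U$ gives $\tau < U(\lambda)$. By Theorem 5.5 there exist $p_1 = f_\lambda \circ M \in [[f_\lambda]]$ and $q_1 \in [[f_\tau]]$ with $[p_1, q_1] < 0$; the contravariance identity of Proposition 2.6 then yields $q_2 = q_1 \circ M^t \in [[f_\tau]] \subseteq B_\tau$ with $[f_\lambda, q_2] < 0$, so $f_\lambda \notin B_\tau^*$. This pins $\sigma = U(\tau)$, completing the proof. The main obstacle I anticipate is in Part 1: the definition of $B_\tau$ as a union of orbits must be read so as to include all psd forms $\ell^2 h$ with $h$ pd at $\tau = -\tfrac 13$, so one must either interpret $[[p]]$ as the topological closure of the monoid orbit or supply an explicit approximation argument by pd quartics equivalent to $f_{\lambda_n}$ with $\lambda_n \to -\tfrac 13$.
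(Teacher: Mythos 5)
Your argument is correct and follows essentially the same skeleton as the paper's: Theorem 5.6 and Proposition 5.2 to identify $B = B_\tau$, Proposition 2.7 to get that $B_\tau^*$ is a blender, the first part to write $B_\tau^* = B_\sigma$, and Theorem 5.5 together with the contravariance of Proposition 2.6 to pin down $\sigma = U(\tau)$. The one structural difference is what you omit: the paper devotes the bulk of its proof to verifying directly that each $B_\tau$ with $\tau \in (-\tfrac13,0)$ satisfies (P1), (P2), (P3), using the $K$-invariant of Lemma 5.3 for (P2) and the apolarity computation \eqref{E:apo} together with Theorem 5.5 for (P1). That verification is not needed for the implication as literally stated (given a blender $B$, find $\tau$ with $B=B_\tau$ and compute $B^*$), which is why your leaner route closes; but it is what the paper needs to justify the stronger claim announced in the introduction that $\mathcal B_{2,4}$ consists precisely of the one-parameter family $\{B_\tau : \tau\in[-\tfrac13,0]\}$ --- your proof leaves open whether an arbitrary $\tau$ in that interval actually arises from a blender. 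You are also right to flag the endpoint subtlety: $[[f_{-1/3}]]$ under the stated definition (orbit under the monoid $Mat_2(\rr)$) gives only products of squares of two distinct linear forms plus fourth powers, so it misses the $Z_{2,4}$ elements $\ell^2 h$ with $h$ pd and irreducible, meaning $B_{-1/3}$ as written is a proper subset of $P_{2,4}$; the paper assumes $B_{-1/3}=P_{2,4}$ without comment, and your suggested repair (reading $B_{-1/3}$ as the topological closure, or approximating $\ell^2h$ by pd forms equivalent to $f_{\lambda_n}$ with $\lambda_n\downarrow -\tfrac13$) is what is genuinely required to make the $\tau=-\tfrac13$ case rigorous.
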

\begin{proof}
Suppose $B$ is a blender and $Q_{2,4} \subsetneq B \subsetneq
P_{2,4}$. Then $\Delta(B) = [\tau,T(\tau)]$ by Theorem 5.6, so $B =
B_{\tau}$ by Prop.\!\! 5.2.  We need to show that each such $B_{\tau}$
is a blender. Since $B_0 = Q_{2,4}$ and $B_{-\frac 13} = P_{2,4}$ are blenders,
we may assume $\tau > -\frac 13$ and all $p \in B_{\tau}$ are pd. 
Clearly, (P3) holds in $B_{\tau}$.  

Suppose $p_m \in B_{\tau}$ and
$p_m \to p$. If $p$ is a 4th power, then $p \in B_{\tau}$. If $p$ is
pd, then $K(p_m) \to K(p)$ by \eqref{E:inv}, 
\eqref{E:inv2} and continuity. In any case, $K(p_m) \ge \phi(\tau)$,
so $K(p) \ge \phi(\tau)$ and $p \in B_{\tau}$. Finally, if $p \in Z_{2,4}$,
then $K(p_m) \ge \phi(\tau) > \phi(-\frac 13) = K(p)$ by Lemma 5.3, and this
contradiction completes the proof of (P2).
 
We turn to (P1). Suppose $p, q \in B_{\tau}$ and $p+q \not\in
B_{\tau}$. Since $p+q$ is pd, $p+q \sim  f_{\la}$ for some $\la < \tau$,
and so there exists $M$ so that $p\circ M + q \circ M = f_{\tau}$. But
now, \eqref{E:apo} and Theorem 5.5 give a contradiction:
\begin{equation}
0 > [f_{\la},g_{U(\tau)}] = [p\circ M,g_{U(\tau)}] + [q \circ M
,g_{U(\tau)}] \ge 0.
\end{equation} 
Thus, $p+q \in B_{\tau}$ and (P1) is satisfied, showing that
$B_{\tau}$ is a blender. It follows from Prop.\!\! 2.7 and Theorem 5.5
that $B_{\tau}^* = B_{\nu}$ for some $\nu$.  But by Theorem 5.5, 
$B_{U(\tau)} \subseteq B_{\tau}^*$ and if $\la < U(\tau)$, then
$f_{\la} \notin  B_{\nu}^*$, thus $B_{\tau}^* = B_{U(\tau)}$.
\end{proof}
A computation shows that $\phi^2(\la) + \phi^2(U(\la)) = \frac 1{27}$,
and this gives an alternate way of describing the dual
cones. Regrettably, this result was garbled in \cite[p.141]{Re1} 
 into the statement that
$B_{\tau}^* = B_{\nu}$, where $\tau^2 + \nu^2 = \frac 19$. The
self-dual blender $B_{\nu_0} =B_{\nu_0}^*$  occurs for $\nu_0 = 1 -
\sqrt{4/3}$. We know of no other interesting properties of $B_{\mu_0}$.

\section{$K_{2,2r}$: binary convex forms}

The author's Ph.D. thesis, submitted in 1976 and
published as \cite{Re0,Re00} in 1978 and 1979, 
discussed $N_{n,2r}$. (The identification of
$N_{n,2r}$ and $K_{n,2r}$ was not made there.) 
 Unbeknownst to him, V. I. Dmitriev had earlier worked on
similar questions at Kharkov University.  In
1969, S. Krein, Dmitriev's advisor, had asked about the extreme elements of
$K_{2,2r}$. Dmitriev wrote \cite{D1} in 1973 and \cite{D2} in 1991.
Dmitriev writes in \cite{D2}:  ``I am not aware of any articles on this
topic, except \cite{D1}.''  We have seen \cite{D2} both in its
original Russian and in the English translation. We have not yet  seen
\cite{D1} (although UI Interlibrary Loan is still trying!), and our
comments on \cite{D2} are based on references in \cite{D2}. There are at
least two mathematicians named V. I. Dmitriev in MathSciNet; the
author of \cite{D1,D2} is 
affiliated with  Kursk State Technical University.  

Let 
\begin{equation}\label{E:qlam}
q_{\la}(x,y) = x^6 + 6\la x^5y+ 15\la^2 x^4y^2 + 20 \la^3 x^3y^3 + 15
\la^2 x^2y^4 + 6\la x y^5 + y^6. 
\end{equation}
In the language of this paper, the four relevant results from
\cite{D1,Re00,D2} are these: 
\begin{proposition}
\ 
\smallskip

\noindent (i) $K_{2,4} = Q_{2,4}$.

\noindent (ii) $Q_{2,2r} \subsetneq K_{2,2r}$ for $r \ge 3$. 

\noindent (iii)  The elements of $\mathcal E(K_{2,6})$, are
$[[q_{\la}]]$, where $0 <   | \la | \le \frac 12$.

\noindent (iv)  $K_{3,4} \subsetneq Q_{3,4}$; specifically,
  $x^4+y^4+z^4+6x^2y^2+6x^2z^2+2y^2z^2 \in K_{3,4} \setminus
  Q_{3,4}$. 

\end{proposition}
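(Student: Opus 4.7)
For (i), I would observe that Theorem 3.11 and Theorem 3.10 (with $r = m = 2$) give $K_{2,4}^* = W_{2,\{(1,2),(1,2)\}} = P_{2,4}$, and dualizing via Prop.\!\! 3.8 yields $K_{2,4} = P_{2,4}^* = Q_{2,4}$. Part (ii) is the same dualization: for $r \ge 3$, Theorem 3.10 gives the strict containment $W_{2,\{(1,2r-2),(1,2)\}} \subsetneq P_{2,2r}$ since $m = 2 < r$, so combining Theorem 3.11 with Prop.\!\! 3.8 yields $K_{2,2r} \supsetneq Q_{2,2r}$.

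For (iv), I would show $p \in K_{3,4}$ and $p \notin Q_{3,4}$ for $p = x^4+y^4+z^4+6x^2y^2+6x^2z^2+2y^2z^2$. For convexity I would invoke Prop.\!\! 4.4: since $p(1,y,z) = (y^2+z^2)^2 + 6(y^2+z^2) + 1$ depends on $(y,z)$ only through $\rho^2 := y^2+z^2$, the function $g(\rho) := p(1,y,z)^{1/4}$ is rotationally symmetric, and convexity of $(y,z)\mapsto g(\rho)$ reduces to $g'(\rho)/\rho \ge 0$ and $g''(\rho) \ge 0$. Differentiating $g^4 = \rho^4+6\rho^2+1$ implicitly and using the identity
\begin{equation*}
(\rho^2+1)(\rho^4+6\rho^2+1) - \rho^2(\rho^2+3)^2 = (\rho^2-1)^2
\end{equation*}
verifies both inequalities, so $p \in K_{3,4}$. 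For $p \notin Q_{3,4} = P_{3,4}^*$ (Prop.\!\! 3.8), I would take the test form $h = -x^2+y^2+z^2$; then $h^2 \in P_{3,4}$ and a direct computation from \eqref{E:ip} gives $[p, h^2] = -\tfrac 13 < 0$, the required witness.

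Part (iii) is the substantive statement, and the plan is threefold. \emph{First}, I would verify $q_\la \in K_{2,6}$ for $|\la| \le \tfrac 12$ by applying Prop.\!\! 4.4 to $q_\la(1,t)$ and checking convexity of the resulting sixth-root function. \emph{Second}, I would establish extremality of each $q_\la$ with $0 < |\la| \le \tfrac 12$: by Theorem 3.11 and Cor.\!\! 2.10, the face of $K_{2,6}^*$ dual to $q_\la$ is generated by $\{(u\cdot)^4(v\cdot)^2 : Hes(q_\la;u,v)=0\}$, so the strategy is to compute this vanishing locus explicitly and show it is large enough to force that dual face to have codimension one in $K_{2,6}^*$, which is equivalent to $q_\la$ spanning an extreme ray. \emph{Third}, I would classify all extremals: apply an analog of Prop.\!\! 5.2 for binary sextics, reducing a general extremal $p$ to a canonical form under the action of $GL_2(\rr)$, and use the boundary Hessian degeneracy $Hes(p;u_0,v_0) = 0$ together with the invariants of $p$ to show $p \sim q_\la$ for a unique $\la \in (0,\tfrac 12]$.

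The hard part will be step three. Binary sextics have a genuinely richer invariant theory than quartics (the ring is generated in degrees $2,4,6,10,15$), so reducing an arbitrary positive definite convex sextic to a two-parameter family, and then to the one-parameter family $\{q_\la\}$, is not immediate. The Hessian degeneracy contributes one equation cutting the moduli down to one parameter, and matching the resulting family to $\{[[q_\la]]\}$ is the delicate $GL_2(\rr)$-normalization carried out by Dmitriev in \cite{D1}.
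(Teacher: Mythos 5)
Parts (i) and (ii) match the paper's own quick argument (the duality consequence of Theorems 3.10 and 3.11 together with Prop.\ 3.8). Your proof of (iv) is correct and attractive: reducing to the radial function $g(\rho)=(\rho^4+6\rho^2+1)^{1/4}$, checking $g'/\rho\ge 0$ and $g''\ge 0$ via the factorization $(\rho^2+1)(\rho^4+6\rho^2+1)-\rho^2(\rho^2+3)^2=(\rho^2-1)^2$, and then pairing $p$ against $(-x^2+y^2+z^2)^2$ to get $[p,h^2]=-\tfrac13<0$. The paper gives no details for (iv), only alluding to an ``extremely long perturbation argument'' in the author's thesis, so your route is a genuine improvement in exposition.

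Part (iii) is where the proposal has real gaps. Step two is flawed both in principle and in execution: a codimension-one dual face in $K_{2,6}^*$ corresponds to an \emph{exposed} extreme ray of $K_{2,6}$, not to an extreme ray in general, so the ``equivalence'' you assert is false --- and in any case you never compute the vanishing locus $\{(u,v):Hes(q_\lambda;u,v)=0\}$ or establish the needed dimension count. Step three invokes the full invariant ring of binary sextics (generators in degrees $2,4,6,10,15$), but the paper never touches that machinery. Its proof instead runs through Proposition 6.2, which packages the Hessian determinant of $p$ as an explicit degree-$(4r-4)$ binary form $\Theta_p$ and identifies $\partial(K_{2,2r})$ with the forms whose $\Theta_p$ is psd but not pd. One then normalizes so that $\Theta_p(0,1)=0$ (yielding $a_0a_2=a_1^2$, $a_0a_3=a_1a_2$, hence $a_k=r^ka_0$ for $k\le 3$), uses Lemma~6.4 to exclude the case $\Theta_p=\ell^2B_p$ with $B_p$ positive definite, and performs a short case analysis on the further zeros of the psd sextic $B_p$. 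Each case reduces $p$ either to a sixth power or, after a linear change, to $q_\lambda$ with $\lambda=\sqrt{rs}$; the bound $|\lambda|\le\tfrac12$ is then extracted by factoring $\Theta_{q_\lambda}=(1-\lambda^2)x^2y^2C_\lambda$ and determining when $C_\lambda$ is psd. This is elementary bookkeeping, not invariant theory. Your step one (verifying $q_\lambda\in K_{2,6}$ via Proposition 4.4) would work, but steps two and three need to be replaced by something concrete of this sort.
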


According to \cite{D2}, \cite{D1} gave a proof of (i) and (ii) (for
even $r$); \cite{D2} gave a proof of (iii). 
All four appeared in \cite{Re00}; (iii) was announced
without proof. (The results from 
 \cite{Re00} were in the author's thesis, except
that (iv) was proved there by an extremely long perturbation argument.)
Note that (i) and (ii) follow from Prop.\!\! 3.8 and Theorems 3.10
and 3.11. Since $P_{n,m} = \Sigma_{n,m}$ if $n = 2$ or $(n,m) = (3,4)$,
 these examples are not helpful in resolving Parrilo's question about
convex forms which are not sos.  

The rest of this section discusses $\partial(K_{2,2r})$, mostly for small $r$.
Let
\begin{equation}\label{E:pdef}
p(x,y) = \sum_{i=0}^{2r} \binom{2r}i a_ix^{2r-i}y^i,
\end{equation}
and define 
\begin{equation}
\begin{gathered}
\Theta_p(x,y):=  \sum_{m=0}^{4r-4} b_mx^{4r-4-m}y^m, \quad
\text{where} \\
b_m :=  \sum_{j=0}^{2r-1}
\left(\binom{2r-2}j\binom{2r-2}{m-j}
  -\binom{2r-2}{j-1}\binom{2r-2}{m-j+1}\right)  
a_ja_{m+2-j},
\end{gathered}
\end{equation}
with the convention that $a_i=0$ if $i < 0$ or  $i > 2r$.
\begin{proposition}\cite[Prop.B]{D2}
Suppose $p \in P_{2,2r}$. Then
 $p \in K_{2,2r}$ if and only if $\Theta_p \in P_{2,4r-2}$ and
 $p \in \partial(K_{2,2r})$ if and only if $\Theta_p$ is psd but not pd.
\end{proposition}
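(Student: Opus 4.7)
The plan is to recognize $\Theta_p$, up to a positive scalar, as the determinant $p_{xx}p_{yy}-p_{xy}^2$ of the Hessian matrix of $p$, and then use the structure of $2\times 2$ psd matrices together with the Euler identity to translate ``$\Theta_p$ is psd'' into ``the Hessian matrix of $p$ is psd at every point of $\rr^2$,'' which is the Hessian definition of convexity from \eqref{E:hes}.

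First I would expand each second partial in binomial form, writing
\[
p_{xx}=2r(2r-1)\sum_{i=0}^{2r-2}\binom{2r-2}{i}a_i\,x^{2r-2-i}y^i,
\]
and similarly $p_{yy}$ (with $a_i$ replaced by $a_{i+2}$) and $p_{xy}$ (with $a_{i+1}$). Multiplying out $p_{xx}p_{yy}$ and $p_{xy}^2$, picking off the coefficient of $x^{4r-4-m}y^m$, and reindexing the convolution coming from $p_{xy}^2$ by $j\mapsto j-1$ so as to pair every contribution with the same product $a_j a_{m+2-j}$, I should obtain exactly the bracket $\binom{2r-2}{j}\binom{2r-2}{m-j}-\binom{2r-2}{j-1}\binom{2r-2}{m-j+1}$ appearing in the definition of $b_m$. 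This identifies $p_{xx}p_{yy}-p_{xy}^2=(2r(2r-1))^2\,\Theta_p$.

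For the first equivalence, let $H(u)$ denote the symmetric matrix of second partials of $p$ at $u\in\rr^2$. Then $p$ is convex iff $H(u)$ is psd for every $u$. A $2\times 2$ symmetric matrix with nonnegative determinant is either psd or negative semidefinite, so to conclude psd it suffices to rule out the negative case pointwise. Differentiating Euler's identity twice gives $u^T H(u)u=Hes(p;u,u)=2r(2r-1)p(u)$, so wherever $p(u)>0$ the matrix $H(u)$ assumes a positive value and cannot be negative semidefinite; combined with $\det H(u)=(2r(2r-1))^2\Theta_p(u)\ge 0$, this forces $H(u)$ to be psd on the open set $\{p>0\}$. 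Because $p\in P_{2,2r}$ is nonzero, its zero set in $\rr^2$ is a finite union of lines and thus has empty interior, and continuity of the entries of $H$ extends psd-ness to all of $\rr^2$. The converse is immediate, since convexity of $p$ gives $H(u)$ psd, hence $\det H(u)\ge 0$, hence $\Theta_p$ psd.

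For the boundary statement, I would invoke the remark following Theorem~3.11: $p\in\partial(K_{n,2r})$ iff there exist nonzero $u_0,v_0\in\rr^n$ with $Hes(p;u_0,v_0)=0$. For binary forms $Hes(p;u_0,v_0)=v_0^T H(u_0)v_0$, so this says exactly that the psd matrix $H(u_0)$ is singular, i.e., $\det H(u_0)=\Theta_p(u_0)=0$. Equivalently, $\Theta_p$ is psd but has a nontrivial zero, i.e., psd but not pd. The main---really the only---technical obstacle is the combinatorial bookkeeping in the first step that turns the raw expansion of $p_{xx}p_{yy}-p_{xy}^2$ into the prescribed $b_m$; once that identity is in hand, everything else reduces to $2\times 2$ linear algebra, Euler's identity, and a brief continuity argument across the (empty-interior) zero set of $p$.
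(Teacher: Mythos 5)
Your proposal is correct and follows essentially the same route as the paper: identify $p_{xx}p_{yy}-p_{xy}^2=(2r)^2(2r-1)^2\Theta_p$ by direct computation, combine $\det H(u)\ge 0$ with the Euler identity $Hes(p;u,u)=2r(2r-1)p(u)\ge 0$ to force the Hessian matrix psd, and read off the boundary criterion from the existence of nonzero $u_0,v_0$ with $Hes(p;u_0,v_0)=0$. Your continuity argument across the (measure-zero) zero set of $p$, needed to rule out a negative-semidefinite Hessian there, carefully fills in a step the paper compresses into a single sentence.
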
 
 \begin{proof}
 A direct computation shows that 
\begin{equation}
 \frac{\partial^2p}{\partial x^2}\frac{\partial^2p}{\partial y^2} -
 \left(\frac{\partial^2p}{\partial x\partial y}\right)^2 =
 (2r)^2(2r-1)^2\Theta_p(x,y).
\end{equation} 
Since $Hes(p;u,u) = 2r(2r-1)p(u) \ge 0$, the first assertion is proved.
Further,  $p \in \partial(K_{2,2r})$ if and only if $Hes(p;u_0,v_0) =
0$ for some $u_0 \neq 0, v_0 \neq 0$. 
\end{proof}
Observe that
$\Theta_{(\al\cdot)^{2r}} =0$, and it  may be checked that if $q(x,y)
= p(ax+by,cx+dy)$, then $\Theta_q(x,y) = 
 (ad-bc)^2\Theta_p(ax+by,cx+dy)$. Thus, if $q
 \in \partial(K_{2,2r})$, we may assume that $q \sim p$,
 where $\Theta_p(0,1) = 0$, so that 
\begin{equation}\label{E:zero}
0 = b_0 = a_0a_2 - a_1^2;\qquad 0 = b_1 = (2r-2)(a_0a_3 - a_1a_2).
\end{equation}

We give a proof that $K_{2,4} = Q_{2,4}$, using the argument of
\cite{Re00} and, presumably, \cite{D1}.
\begin{proposition}
 $K_{2,4} = Q_{2,4}$.
\end{proposition}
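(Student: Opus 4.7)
The plan is to show $\mathcal E(K_{2,4}) \subseteq Q_{2,4}$; since $K_{2,4}$ is a pointed closed convex cone, it equals the sum of its extremal rays, and then $K_{2,4} \subseteq Q_{2,4}$ follows. The reverse inclusion is immediate, as fourth powers of linear forms are convex (equivalently, $Q_{2,4} \subseteq K_{2,4}$ by Prop.~2.5).

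First I would reduce an extremal $p$ to a normal form. Take $0 \ne p \in \mathcal E(K_{2,4})$. Since $K_{2,4}$ is full-dimensional, $p \in \partial(K_{2,4})$, so by Prop.~6.2 the quartic $\Theta_p$ is psd but not pd, hence has a nontrivial real zero. Using the transformation rule $\Theta_{p \circ M}(x,y) = (\det M)^2\, \Theta_p(Mx)$, I can pick an invertible $M$ sending that zero to $(0,1)$, and replace $p$ by $p \circ M$ (still extremal, still convex, still psd). With $p$ written as in \eqref{E:pdef}, the vanishing relations in \eqref{E:zero} become
\begin{equation*}
a_0 a_2 = a_1^2, \qquad a_0 a_3 = a_1 a_2.
\end{equation*}

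The main step is the case $a_0 > 0$. Set $\lambda = a_1/a_0$; the relations force $a_1 = a_0\lambda$, $a_2 = a_0\lambda^2$, $a_3 = a_0\lambda^3$, so expanding $(x+\lambda y)^4$ yields the identity
\begin{equation*}
p(x,y) = a_0(x + \lambda y)^4 + (a_4 - a_0\lambda^4)\, y^4.
\end{equation*}
Since $p$ is psd, $p(-\lambda, 1) = a_4 - a_0\lambda^4 \ge 0$, so $p$ is a nonnegative combination of two fourth powers, both of which lie in $K_{2,4}$. Extremality then forces $a_4 = a_0\lambda^4$, so $p = a_0(x+\lambda y)^4 \in Q_{2,4}$. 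In the remaining case $a_0 = 0$, the first relation gives $a_1 = 0$, so $p(1,0) = 0$; Prop.~4.1 then yields $p \sim q(y)$ for some pd binary form $q$ in one variable, so $p \sim cy^4$ and $p$ itself is a fourth power.

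The only step requiring real work is verifying the algebraic identity in the main case. The rest is bookkeeping: extremality is exactly what rules out the $(a_4 - a_0\lambda^4)y^4$ summand, and the degenerate case $a_0 = 0$ is handled cleanly by the earlier Prop.~4.1 on the structure of non-definite convex forms.
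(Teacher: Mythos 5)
Your proof is correct and follows essentially the same route as the paper: reduce an extremal $p$ to the normal form $\Theta_p(0,1)=0$, use \eqref{E:zero} to parametrize $a_i = a_0\lambda^i$ for $i\le 3$, decompose $p = a_0(x+\lambda y)^4 + (a_4 - a_0\lambda^4)y^4$, and invoke extremality. The only cosmetic difference is that you deduce $a_4 - a_0\lambda^4 \ge 0$ from $p$ being psd (evaluating at $(-\lambda,1)$), while the paper reads it off from $\Theta_p = a_0 s\, x^2(x+\lambda y)^2$ being psd; both are equally direct.
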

\begin{proof}
Suppose $q \in \mathcal E(K_{2,4})$. Then $q \in \partial(K_{2,4})$ and
$q \sim p$ where $\Theta_p$ is psd, but $\Theta_p(0,1) = 0$. If $a_0 = 0$,
then $p(0,1) = 0$, so by Prop.\!\! 4.1, $p(x,y) = a_4 y^4$ is a 4th
power. Otherwise, $a_0 > 0$, and if we write $a_1 = ra_0$, then
by \eqref{E:zero}, we have  $a_2 = r^2a_0$ and $a_3 = r^3 a_0$. Write $a_4 =
r^4a_0 + s$. A computation shows that $\Theta_p(x,y) = a_0 s
x^2(x+ry)^2$, hence $s \ge 0$ and $p(x,y) = a_0(x + ry)^4 + s
y^4$. Since $Q_{2,4} \subset K_{2,4}$ and $s \ge 0$, it follows
that $p \in \mathcal E(K_{2,4})$ if and only if $s=0$. Thus $p \in
K_{2,4}$, being  a sum of extremal elements, is a sum of 4th powers.
\end{proof}

If $2r=6$, then we shall need $\Theta_p(x,y)$ in full bloom:

\begin{equation}\label{E:theta}
\begin{gathered}
 \Theta_p(x,y) = (a_0a_2-a_1^2)x^8 +4(a_0a_3-a_1a_2)x^7y +  (6a_0a_4 +
 4a_1a_3 - 10a_2^2) x^6y^2 \\
+ 4(a_0a_5 + 4a_1a_4 - 5a_2a_3) x^5 y^3 +
(a_0a_6+14a_1a_5+5a_2a_4-20a_3^2) x^4y^4\\ + 4(a_1a_6 + 4a_2a_5 -
5a_3a_4) x^3 y^5  + (6a_2a_6 +
 4a_3a_5 - 10a_4^2) x^2y^6  \\+ 4(a_3a_6-a_4a_5)xy^7 +  (a_4a_6-a_5^2)y^8.
\end{gathered}
\end{equation}

\begin{lemma}
If $p \in K_{2,6}$ and $\Theta_p(x,y) = \ell^2(x,y)B_p(x,y)$, where
$\ell$ is linear and $B_p$ is a
pd sextic, then $p \notin \mathcal E(K_{2,6})$.
\end{lemma}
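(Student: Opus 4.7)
The plan is to exhibit an explicit nontrivial decomposition $p = \epsilon x^6 + (p - \epsilon x^6)$ in $K_{2,6}$ with neither summand proportional to $p$, after reducing $p$ to a suitable normal form. Since $\mathcal E(K_{2,6})$ is $\sim$-invariant, I may freely apply invertible changes of variables. First, pick coordinates so that $\ell(x,y) = y$, giving $\Theta_p(x,y) = y^2 B_p(x,y)$ with $B_p$ pd; the vanishing of the coefficients of $x^8$ and $x^7y$ in $\Theta_p$ is then \eqref{E:zero}, i.e., $a_0 a_2 = a_1^2$ and $a_0 a_3 = a_1 a_2$. If $a_0 = 0$, then $p(e_1) = 0$ and Prop.\! 4.1 would force $p = a_6 y^6$, whence $\Theta_p \equiv 0$---contradicting $B_p$ pd. Hence $a_0 > 0$ and $a_i = a_0 r^i$ for $i \le 3$ with $r = a_1/a_0$. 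A second (triangular, invertible) change $(x,y) \mapsto (x - ry, y)$ preserves the structure $\Theta_{\cdot} = y^2 \cdot (\text{pd sextic})$ and reduces to $r=0$; so one may assume
\begin{equation*}
p(x,y) = a_0 x^6 + 15 a_4 x^2 y^4 + 6 a_5 xy^5 + a_6 y^6, \qquad a_0 > 0,
\end{equation*}
and evaluating the corresponding $B_p$ at $(1,0)$ yields $6a_0 a_4 > 0$, so $a_4 > 0$ as well.

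For small $\epsilon > 0$, set $p_\epsilon := p - \epsilon x^6$. Its coefficients agree with those of $p$ except that $a_0$ is replaced by $a_0 - \epsilon$, so $a_1 = a_2 = a_3 = 0$ still holds, and a direct substitution into \eqref{E:theta} produces
\begin{equation*}
\Theta_{p_\epsilon}(x,y) = y^2 \bigl( B_p(x,y) - \epsilon\, x^4 (6 a_4 x^2 + 4 a_5 xy + a_6 y^2) \bigr).
\end{equation*}
Since $B_p$ is pd, there is $c>0$ with $B_p \ge c$ on the unit circle while the perturbation is bounded there by some $M$; hence for $\epsilon \in (0, c/M)$ the bracketed form is pd, so $\Theta_{p_\epsilon}$ is psd and $p_\epsilon \in K_{2,6}$ by Prop.\! 6.2.

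Since $x^6 \in K_{2,6}$ (a 6th power of a linear form), the identity $p = \epsilon x^6 + p_\epsilon$ exhibits $p$ as a sum of two convex forms, and neither summand is proportional to $p$: $a_4 > 0$ means $p$ has a nonzero $x^2 y^4$ coefficient, so $p \not\propto x^6$; and if $p_\epsilon = \mu p$ for some $\mu \ge 0$, comparing $x^2 y^4$ coefficients forces $\mu = 1$, contradicting the $x^6$ comparison $a_0 - \epsilon = \mu a_0$. Thus $p \notin \mathcal E(K_{2,6})$. The main obstacle is justifying the displayed formula for $\Theta_{p_\epsilon}$; this rests on the fact that $\Theta_{x^6} \equiv 0$ (a perfect 6th power has a vanishing Hessian discriminant) together with $\Theta$ being quadratic in the coefficients of its argument, so that $\Theta_{p - \epsilon x^6} = \Theta_p - 2\epsilon\, \Psi(p, x^6)$, with $2\Psi(p, x^6) = x^4 y^2 (6 a_4 x^2 + 4 a_5 xy + a_6 y^2)$ computed directly from \eqref{E:theta} using $a_j(x^6) = \delta_{j,0}$.
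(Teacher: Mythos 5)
Your proof is correct and follows essentially the same strategy as the paper's: after reducing to $\ell = y$, perturb $p$ by a small multiple of a sixth power of a linear form, and use the positive definiteness of $B_p$ to show the perturbed form stays in $K_{2,6}$ via Prop.\! 6.2, forcing $p$ to be non-extremal. The paper perturbs by $\pm\epsilon y^6$ (writing $p = \tfrac12(p_{\epsilon}+p_{-\epsilon})$, which makes nontriviality automatic and avoids your extra normalization to $r=0$), whereas you perturb by $-\epsilon x^6$ and check nontriviality by coefficient comparison; these are cosmetic differences.
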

\begin{proof} 
After a linear change, we may assume $\ell(x,y) = y$, and assume $p$
is given by \eqref{E:pdef}, so that \eqref{E:theta} holds. If
$a_0=p(1,0) = 0$, then as 
in Prop.\!\! 6.3, $p(x,y) = a_6y^6$ and $\Theta_p(x,y) = 0$. Otherwise,
we again have  $a_1 = ra_0$, $a_2 = r^2a_0$ and $a_3 = r^3 a_0$. A
computation shows that 
\begin{equation}\label{E:bp}
\begin{gathered}
B_p(x,y) = 6a_0(a_4-r^4a_0)x^6 + 4a_0(a_5+4ra_4-5r^5a_0)x^5y  \\ +
a_0(a_6+14ra_5+5r^2a_4-20r^6a_0) x^4y^2\\ + 4ra_0(a_6 + 4ra_5 -
5r^2a_4) x^3 y^3+  (6r^2a_0a_4 +
 4r^3a_0a_5 - 10a_4^2) x^2y^4 \\ + 4(r^3a_0a_6-a_4a_5)xy^5 +  (a_4a_6-a_5^2)y^6.
\end{gathered}
\end{equation}
Observe that if $p_{\la} = p + \la y^6$, then $a_6$ is replaced above
by $a_6 + \la$ and
\begin{equation}
\begin{gathered}
B_{p_{\la}} = B_p+ \la (a_0 x^4y^2 + 4r a_0 x^3y^3 + 6r^2
a_0 x^2y^4 + 4r^3 a_0 xy^5 + a_4 y^6). 
\end{gathered}
\end{equation}
Since $B_p$ is pd, there exists sufficiently small $\epsilon$ so that
$B_{p_{_{\pm \epsilon}}}$ is psd, so $p_{\pm \epsilon} \in K_{2,6}$. 
But then $p = \frac 12(p_{\epsilon} + p_{-\epsilon})$ is not extremal.
\end{proof}

\begin{proof}[Proof of Prop.\!\! 6.1(iii)]
By Prop.\! 6.2 and Lemma 6.4, we may assume that $\Theta_p = y^2B_p$ and $B_p$
is psd, but not pd. If $B_p(0,1) = 0$, then by \eqref{E:bp}, $a_4 = r^4a_0$
and $a_5 = r^5a_0$ and, as before, if $a_6 = r^6a_0 + t$, then $\Theta_p =
a t x^4(x+ry)^4$, so $t \ge 0$ and $p \in \mathcal E(K_{2,6})$ if and
only if $t=0$, so $p$ is a 6th power. 

If $B_p(1,e) = 0$ and $e \neq 0$, and $\tilde p(x,y) = p(y,x+ey)$, then
$\Theta_{\tilde p}(x,y) = 0$ at $(x,y) = (1,0), (0,1)$, and by dropping the
tilde, we may assume from \eqref{E:theta} that $0 = a_4a_6 - a_5^2 = a_3a_6
- a_4a_5$. Again, $a_6 = p(0,1) \ge 0$, and if $a_6=0$, then $p$ is a
6th power. Otherwise, we set $a_5 = s a_6$, so that $a_4 = s^2a_6$ and
$a_3 = s^3 a_6$; recall that $a_3 = r^3 a_0$ as well. If $s=0$, then
$a_3 = 0$, so $r=0$ and $p(x,y) = a_0x^6 + a_6y^6$, which is only
extremal if it is a 6th power. Thus $s \neq 0$, and similarly, $r \neq 0$. 
Letting $t = s^{-1}$, we obtain the formulation of \cite{D2}:
\begin{equation}
p(x,y) = a_0(x^6 + 6r x^5y + 15 r^2 x^4y^2 + 20 r^3 x^3y^3 + 15 r^3t
x^2y^4 + 6 r^3t^2 xy^5 + r^3t^3 y^6)
\end{equation} 
Finally, send $(x,y) \mapsto (a_0^{-1/6}x,
a_0^{-1/6}(rt)^{-1/2} y)$ and set $\la = \sqrt{r/t} = \sqrt{rs}$ to obtain
  $q_{\la}$.

 A calculation shows that
\begin{equation}
\begin{gathered}
\Theta_{q_{\la}}(x,y) = (1-\la^2)x^2y^2 C_{\la}(x,y),\quad \text{where} \\
C_{\la}(x,y) =  6\la^2(x^4+y^4) + (4\la +
20\la^3)(x^3y+xy^3) + (1+15\la^2+20\la^4)x^2y^2.
\end{gathered}
\end{equation} 
Note that 
\begin{equation}
\begin{gathered}
D_{\la}(x,y):= C_{\la}(x+y,x-y)= (1 + \la) (1 + 2 \la) (1 + 5 \la + 10
\la^2)x^4\\ -2 
(1-\la^2)(1-20 \la^2)x^2y^2 + (1 -\la) (1 - 2 \la) (1 - 5 \la + 10 \la^2)x^4.
\end{gathered}
\end{equation} 
If $\Theta_{q_{\la}}$ is psd, then   $6\la^2(1-\la^2) \ge 0$, so $|\la| \le
1$. Under this assumption, it suffices to determine when $D_{\la}$ is
psd. Since $D_{\la}(1,0), D_{\la}(0,1) \ge 0$,  $|\la| \le \frac 12$. 
If $D_{\la}(x,y) = E_{\la}(x^2,y^2)$, then 
the discriminant of $E_{\la}$ is
$128\la^2(1-\la^2)(1-10\la^2)$, hence $D_{\la}$ is psd if $0 \le \la^2
\le \frac 1{10}$. But, if $\frac 1{20} \le \la^2 \le \frac 14$, then
$D_{\la}$ is a sum of psd monomials. Thus $D_{\la}$ is psd
if $|\la| \le \frac 12$, and hence this is also true for 
 $C_{\la}$ and  thus for $\Theta_{q_{\la}}$, so $q_{\la} \in K_{2,6}$. 
\end{proof}

Since  $\Theta_{q_\la}$ has two zeros when $|\la| < \frac 12$, but
$\Theta_{q_{1/2}} = \frac 98 x^2y^2(x+y)^2(x^2+xy+y^2)$ has
three, one expects that  the algebraic patterns for $\Theta_p$ will
be variable for  $ p \in \mathcal E(K_{2,2r})$ for $r \ge 3$ and that 
 $\mathcal E(K_{2,2r})$ will be hard to analyze.

Note also that
\begin{equation}\label{E:even}
\begin{gathered}
q_{\la}(x+y,x-y) = 
2(1+\la)(1+5\la+10\la^2)x^6 + 30(1-\la^2)(1+2\la)x^4y^2 \\ +
30(1-\la^2)(1-2\la)x^2y^4 
+  2(1-\la)(1-5\la+10\la^2)y^6.
\end{gathered}
\end{equation}
One of the two boundary examples is $q_{-1/2}(x+y,x-y)= x^6 + 45 x^2 y^4 + 18
y^6$, which scales to $x^6 + 15\al x^2 y^4 + y^6$, where $\al^3 =
\frac 1{12}$. 

We now consider the sections of $P_{2,6}=\Sigma_{2,6}$,
$Q_{2,6}$ and $K_{2,6}$ consisting of forms 
\begin{equation}
g_{A,B}(x,y) = x^6 + \binom 62 A x^4y^2 + \binom 64 B x^2y^4 + y^6,
\end{equation}
and identify $g_{A,B}$ with the point $(A,B)$ in the plane.

If $g_{A,B}$ is on the boundary of the $P_{2,6}$ section, then it
is not pd, and we may assume $(x + r y)^2\ |\ g_{A,B}$ for some $r \neq
0$.  Thus, $(x-ry)^2\ |\ g_{A,B}$ as well, and 
since the remaining factor must be even, the coefficients of $x^6,y^6$ force it
to be $x^2 + \frac 1{r^4} y^2$. Thus, the boundary forms for the
section of $P_{2,6}$ are
\begin{equation}
(x^2-r^2y^2)^2(x^2 + \tfrac 1{r^4}y^2) = x^6 +( \tfrac 1{r^4} -
2r^2)x^4y^2 +  (r^4 - \tfrac 2{r^2})x^2y^4 + y^6.
\end{equation}
The parameterized boundary curve
\begin{equation}
(A,B) = \tfrac 1{15}( \tfrac 1{r^4} - 2r^2, r^4 - \tfrac 2{r^2})
\end{equation}
is strictly decreasing as we move from left to right, and is a
component of the curve $500(A^3+B^3) = 1875(AB)^2 + 150AB - 1$.

By \eqref{E:cata}, $g_{A,B}$ is in $Q_{2,6} = \Sigma^*_{2,6}$, iff
$\left( 
\begin{smallmatrix} 1 & 0 & A & 0 \\ 0 & A & 0 & B \\ A & 0 & B & 0 \\ 0 &
  B & 0 & 1  
\end{smallmatrix}\right)$
is psd iff $A \ge B^2$ and $B \ge A^2$, so the section is the
familiar region between these two parabolas. 

Except for the fortuitous identity \eqref{E:even},
 it would have been very challenging to determine the section for $K_{2,6}$. 
Scale $x$ and $y$ in \eqref{E:even} to get $g_{A,B}$: the
parameterization of the boundary is 
$(\psi(\la),\psi(-\la))$, where 
\begin{equation}
\psi(\la) = \frac
{(1-\la)^{2/3}(1+\la)^{1/3}(1+2\la)}{(1+5\la+10\la^2)^{2/3}(1-5\la+10\la^2)^{1/3}}. 
\end{equation}
The intercepts occur when $\la = \pm \frac 12$ and are 
$(12^{-\frac 13},0)$ and $(0,12^{-\frac 13})$. The point $(1,1)\  (\la = 0)$  is
smooth but of infinite curvature. The Taylor series of $\psi(\la)$ at
$\la=0$ begins $1 + \frac {16}3 \la^3 - 48 \la^4$, so locally, $x-y
\approx \frac 
{32}3 \la^3$ and $x+y-2 \approx -96 \la^4$, hence  
\begin{equation*}
x+y-2 \approx
-\tfrac{3^{7/3}}{2^{5/3}}(x-y)^{4/3}. 
\end{equation*}
The maximum value of $\psi(\la)$ is
$5^{-5/3}(1565+496\sqrt{10})^{1/3} \approx 1.000905$ at $\la =
\frac{2\sqrt{10}-5}{15} \approx .0883$; this was asserted without
proof in \cite[p.232]{Re00}. 

 At this point, we punt
and present some trinomials in $\partial(K_{2,2r})$. Suppose
 $1 \le v \le 2r-1$, $a,c > 0$ and suppose
\begin{equation}
h(x,y) = a x^{2r} + b x^{2r-v}y^v + c y^{2r} \in K_{2,2r}.
\end{equation}
An examination of the end terms of $\Theta_h$ shows that $v$ must be
even and $b \ge 0$. If $b=0$, then $h \in Q_{2,2r}$, so we assume $b >
0$, and wish to find the largest possible value of $b$. Calculations,
which we omit, show that if 
 \begin{equation}\label{E:hrk}
\begin{gathered}
h_{r,k}(x,y) := (r-k)(2(r-k)-1)^2 x^{2r}\\
 + r(2r-1)(2k-1)(2r-2k-1)x^{2r-2k}y^{2k} +
k(2k-1)^2 y^{2r}, 
\end{gathered}
\end{equation}
then $\Theta_{h_{r,k}}(x,y)= x^{2r-2-2k}y^{2k-2}(x^2-y^2)^2g(x,y)$,
where $g$ is a (psd) sum 
of even terms with positive coefficients, and that
if $c > 0$ and $g_{r,k,c} = h_{r,k} + c x^{2r-2k}y^{2k}$, 
 then $\Theta_{g_{r,k,c}}(1,1) < 0$. Given $(a,c)$, there exist
   $(\al,\be)$ so that the coefficients of $x^{2r}$ and 
$y^{2r}$ in $h_{r,k}(\al x, \be y)$ are both 1, and we get the examples in
\cite[Prop.1]{Re00}. In particular,  
\begin{equation}
h_{4k,2k}(x,y)  \sim x^{4k} + (8k-2)x^{2k}y^{2k} + y^{4k}\in \partial(K_{2,4k}).
\end{equation}
Similar methods show that  
\begin{equation}
x^{6k} + (6k-1)(6k-3)x^{4k}y^{2k} + (6k-1)(6k-3)x^{2k}y^{4k} + y^{6k}
\in \partial(K_{2,6k}).
\end{equation}

We have been unable to analyze $K_{2,8}$ completely, but have found
this  interesting element in $\mathcal E(K_{2,8})$:
\begin{equation}
p(x,y) = (x^2+y^2)^4 + \tfrac 8{\sqrt 7}\ x y (x^2 - y^2)(x^2+y^2)^2,
\end{equation}
for which $\Theta_p(x,y)= 3072 x^2 (x - y)^2 y^2 (x + y)^2 (x^2 + y^2)^2$.

\section{Sums of 4th powers and octics}

Hilbert's 17th Problem asks whether $p \in P_{n,2r}$ must be a sum of
squares of rational functions: does there always exist $h = h_p \in
F_{n,d}$ (for some $d$) so that $h^2p \in \Sigma_{n,2r+2d} = W_{n,2(r+d)}$? Artin
proved that the 
answer is ``yes''. (See \cite{R2,Re3}.) Becker \cite{Be} investigated the
question for higher even powers. His result implies that if $p \in
P_{2,2kr}$ and all real linear factors of $p$ (if any) occur to an exponent
which is a multiple of $2k$, then there exists $h = h_p \in F_{2,d}$
(for some $d$) so that $h^{2k}p \in W_{2,(r+d,2k)}$.

For example, by Becker's criteria, $f_{\lambda}$ (c.f. \eqref{E:flam})
is a sum of 
4th powers of rational functions if and only if it is pd; that is,
$\la \in (-\frac 13,\infty)$. As we have seen, $f_{\lambda} 
\in Q_{2,4} = W_{2,(1,4)}$ if and only if $\la \in [0,1]$. If $\ell$ is linear and
$\ell^4f = \sum_k h_k^4 \in W_{2,(2,4)}$, then $\ell | h_k$, so if $f_{\la}
\notin Q_{2,4}$ and $h^4f \in W_{2,(1+d,4)}$, then  $\deg h = d \ge
2$. The  identity 
\begin{equation}
\begin{gathered}
3(3x^4 - 4x^2y^2 + 3y^4)(x^2 + y^2)^4 \\ = 2 ( (x-y)^4 + (x+y)^4)
(x^8 + y^8) + 5x^{12} + 11x^8y^4 + 11x^4y^8 + 5y^{12} 
\end{gathered}
\end{equation}
shows that $(x^2+y^2)^4f_{\la} \in W_{2,(3,4)}$ for $\la \in [-\frac
29, \frac{11}3]$, since $T(-\frac{2}9) = \frac{11}3$, c.f. \eqref{E:TU}.  

We know no alternate characterization of 
$W_{2,(u,4)}$, but offer the following  conjecture:
\begin{conjecture}
If $p \in P_{2,4u}$, then $p \in W_{2,(u,4)}$ if and only if there
exist $f,g \in P_{2,2u}$ so that $p = f^2 + g^2$.
\end{conjecture}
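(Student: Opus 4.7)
The plan is as follows. For the sufficiency direction, given $f, g \in P_{2,2u}$, I use that every psd binary form of even degree is a sum of two squares to write $f = h_1^2 + h_2^2$ and $g = k_1^2 + k_2^2$ with $h_i, k_i \in F_{2,u}$. The Gauss identity \eqref{E:h22}, applied with $(x,y)$ replaced by $(h_1, h_2)$ (and similarly $(k_1, k_2)$), exhibits $f^2$ and $g^2$ each as sums of fourth powers of forms in $F_{2,u}$. Hence $p = f^2 + g^2 \in W_{2,(u,4)}$ by (P1).

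For necessity I propose a duality reduction. Set $C_u := \{f^2 + g^2 : f, g \in P_{2,2u}\}$, so the sufficiency direction shows $C_u \subseteq W_{2,(u,4)}$. A form $q$ lies in $W_{2,(u,4)}^*$ iff $[q, h^4] \ge 0$ for every $h \in F_{2,u}$, while $q$ lies in the dual of the closed convex cone generated by $C_u$ iff $[q, f^2] \ge 0$ for every $f \in P_{2,2u}$. Since $h^2 \in P_{2,2u}$ whenever $h \in F_{2,u}$, setting $f = h^2$ yields one direction of the equivalence; the reverse follows from sufficiency (applied with $g = 0$). So the two duals agree, and by double duality $W_{2,(u,4)}$ equals the closed convex hull of $C_u$. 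The necessity direction of the conjecture is therefore equivalent to showing that $C_u$ itself is already a closed convex cone.

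Closedness of $C_u$ should follow from a compactness argument: given $f_m^2 + g_m^2 \to p$ with $f_m, g_m \in P_{2,2u}$, pointwise bounds from Lemma 3.1 together with a normalization as in the proof of Theorem 3.5(ii) yield convergent subsequences $f_m \to f$, $g_m \to g$ in $P_{2,2u}$ with $p = f^2 + g^2$. The principal obstacle is convexity: given $p = (f_1^2 + g_1^2) + (f_2^2 + g_2^2)$, one must collapse four squares of psd binary forms into two. This is the assertion that the positive cone $P_{2,2u}$ has ``Pythagoras number'' equal to $2$. The case $u=1$ is Theorem 7.3. For $u > 1$ I would first reduce to pd $p$ by peeling off real linear factors (which must divide $f_i, g_i$ in a matched way), and then attempt to assemble $f, g$ by factoring $p$ into its pd quadratic factors and invoking Theorem 7.3 iteratively. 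The rigidity of the psd constraint on $f, g$ is the central difficulty I foresee.
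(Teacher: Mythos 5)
The statement is labeled a \emph{conjecture} in the paper and is not proved there in general; the paper establishes it only for $u=1$ (Theorem 7.3) and for even symmetric octics (Theorem 7.4). Your proposal does not close the conjecture either, and you are candid about where the gap lies, so let me assess the pieces.

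Your sufficiency direction is correct and matches the observation in the paper (following \eqref{E:h22}) that the square of a psd binary form is a sum of three $4$th powers: writing $f = h_1^2 + h_2^2$ with $h_i \in F_{2,u}$ and applying \eqref{E:h22} to $(h_1,h_2)$ exhibits $f^2$ as a sum of fourth powers in $F_{2,u}$.

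Your duality reduction is also correct and is a clean reformulation. Both $W_{2,(u,4)}^*$ and the dual of $\overline{\mathrm{cone}}(C_u)$ are cut out by the conditions $[q,h^4]\ge 0$ ($h \in F_{2,u}$) and $[q,f^2]\ge 0$ ($f\in P_{2,2u}$) respectively; these are equivalent by taking $f=h^2$ on one side and using sufficiency on the other. Double duality then gives $W_{2,(u,4)} = \overline{\mathrm{cone}}(C_u)$, and since $C_u$ is a topologically closed cone (your boundedness argument via Lemma 3.1 goes through without normalization, since $f_m^2 \le f_m^2 + g_m^2$ pointwise), the necessity direction of the conjecture is \emph{equivalent} to $C_u$ being convex, i.e., that $(f_1^2+g_1^2)+(f_2^2+g_2^2)$ is again a sum of two squares of psd forms.

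That equivalence is the genuine content of the conjecture, and you have not established it. Your sketch for $u>1$ --- peel off real linear factors, factor $p$ into pd quadratics, ``invoke Theorem 7.3 iteratively'' --- does not give an inductive step: Theorem 7.3 treats a single quartic $f_\la$, and there is no mechanism offered for combining representations of quadratic factors into a two-square psd representation of their product. Indeed, even if $q_1 = a_1^2+b_1^2$ and $q_2 = a_2^2+b_2^2$ with $a_i,b_i$ psd, the Gauss composition $q_1q_2 = (a_1a_2 - b_1b_2)^2 + (a_1b_2+a_2b_1)^2$ produces the indefinite form $a_1a_2 - b_1b_2$, so psd-ness is not preserved by the obvious multiplication. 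This is precisely the ``rigidity of the psd constraint'' you flag, and it is why the paper treats the result as open apart from the two special cases it proves directly (via Proposition 5.2 for $u=1$, and via the explicit description of $\widetilde W$ and the family $\psi_\la$ for even symmetric octics).
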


 It follows from \eqref{E:h22} that
 the square of a psd binary form is a sum of three 4th
powers.  Conjecture 7.1 thus implies that any sum of 4th powers of
polynomials is a sum of six 4th powers of polynomials.
Any sum of $s$ 4th powers will be a sum of $s$
squares of psd forms; the conjecture asserts that $p$ is a
sum of {\it two} such squares.
If $p \in W_{2,(u,4)}$, then $p \in P_{2,4u} =
\Sigma_{2,4u}$, so $p = f^2 + g^2$ for some $f,g \in F_{n,2u}$; 
the conjecture says that there is a representation
in which $f$ and $g$ are themselves psd. 

This seems related to a  result in \cite{CLPR} about sums of 4th powers
of rational functions over real closed fields. If $p = \sum h_k^4$ and
$\ell | p$ for a linear form, then $\ell^{4t} | p$ for some $t$ 
and $\ell^t  | h_k$, so we may assume $p$ is pd. The following is a
special case of \cite[Thm.4.12]{CLPR}, referring to
sums of 4th powers of non-homogeneous rational functions.
\begin{proposition}
Suppose $p \in \rr[x]$ is pd. Then $p$ is a sum of 4th powers in
$\rr(x)$ if and only if there exist pd $f,g,h$ in $\rr[x]$,
 $\deg f = \deg g$, such that $h^2p = f^2 +g^2$.
\end{proposition}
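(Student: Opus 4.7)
The argument splits by direction.

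For the ``if'' direction, assume $h^{2}p=f^{2}+g^{2}$ with $f$, $g$, $h$ pd. Dividing by $h^{2}$ gives $p=(f/h)^{2}+(g/h)^{2}$ in $\rr(x)$. Each of $f/h$ and $g/h$ is a strictly positive rational function on $\rr$; since the Pythagoras number of $\rr(x)$ equals $2$, we may write $f/h=\phi^{2}+\psi^{2}$ for some $\phi,\psi\in\rr(x)$. Applying \eqref{E:h22} to the pair $(\phi,\psi)$ expresses $(\phi^{2}+\psi^{2})^{2}$ as a sum of three $4$th powers in $\rr(x)$, so $(f/h)^{2}$ is a sum of $4$th powers; the same argument for $(g/h)^{2}$, followed by addition, realizes $p$ as a sum of $4$th powers.

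For the ``only if'' direction, suppose $p$ is pd and $p=\sum_{i}(a_{i}/b_{i})^{4}$. Let $L=\operatorname{lcm}(b_{i})$; clearing denominators yields
\[
L^{4}p=\sum_{i}A_{i}^{4},\qquad A_{i}=La_{i}/b_{i}\in\rr[x].
\]
Rewriting the right-hand side as $\sum_{i}(A_{i}^{2})^{2}$ shows $L^{4}p$ is sos in $\rr[x]$; since the Pythagoras number of $\rr[x]$ is also $2$, we obtain $L^{4}p=F^{2}+G^{2}$ for some $F,G\in\rr[x]$. Any real root $\rho$ of $L$ of multiplicity $m$ contributes $(x-\rho)^{4m}$ to $L^{4}p$, which forces $(x-\rho)^{2m}$ to divide both $F$ and $G$; extracting these common factors, we may assume $L$ is pd, so that $L^{4}p$ is pd and $F$, $G$ share no real zero.

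It remains to produce pd summands of equal degree. Over $\cc$ the identity reads $L^{4}p=(F+iG)(F-iG)$, and we factor $F+iG=c\prod_{j}(x-\gamma_{j})$ with all $\gamma_{j}$ non-real. Two moves preserve $F^{2}+G^{2}$: swaps $\gamma_{j}\leftrightarrow\bar\gamma_{j}$, and rotations $(F,G)\mapsto(\cos\theta\,F-\sin\theta\,G,\sin\theta\,F+\cos\theta\,G)$ arising from multiplying $F+iG$ by a unit-modulus constant. Moreover, for any pd $M\in\rr[x]$, multiplying the identity by $M^{2}$ yields $(ML^{2})^{2}p=(M(F+iG))(M(F-iG))$ and introduces the extra complex root pairs from $M$, widening the choice of distributions of roots between the new summands. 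A sufficiently large $M$, combined with judicious swap-and-rotate choices, makes both new summands pd of equal degree and gives the required identity with $h=ML^{2}$. The main obstacle is precisely this last matching step: one must show that the combined freedom from root swaps, rotations, and multiplication by $M^{2}$ suffices to simultaneously avoid real zeros in each summand and equalize their degrees. This combinatorial argument on the multiset of complex roots is the technical heart of \cite[Thm.\ 4.12]{CLPR}, of which the present proposition is the stated special case.
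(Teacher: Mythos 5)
The paper provides no proof of this proposition; it simply cites \cite[Thm.\ 4.12]{CLPR}, so there is no argument in the paper to compare against. Your ``if'' direction is correct and elementary: $f/h$ and $g/h$ are positive elements of $\rr(x)$, hence each is a sum of two squares --- this uses both that a positive rational function of one variable is a sum of squares and that the Pythagoras number of $\rr(x)$ is $2$; you invoke only the latter, but both are standard in one variable --- and \eqref{E:h22} then converts each square of a sum of two squares into a sum of three $4$th powers in $\rr(x)$.

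In the ``only if'' direction your reduction to $L^{4}p=F^{2}+G^{2}$ with $L$ pd is sound, including the extraction of common real zeros of $F$ and $G$. But the remaining step --- showing that, after multiplying by $M^{2}$ and rotating, the conjugate root pairs of $M(F+iG)$ can be redistributed between the two conjugate factors so that both the real and imaginary parts become pd of the same degree --- is the genuine content of \cite[Thm.\ 4.12]{CLPR}, and you are right to flag it as such. As written, ``a sufficiently large $M$, combined with judicious swap-and-rotate choices'' is a plausibility statement, not a proof; the gap sits exactly where CLPR's combinatorial argument on root multisets does the work. So your proposal does not constitute an independent proof of the hard direction, but it is consistent with the paper, which also relies entirely on the CLPR citation for this statement.
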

It follows that a sum of 4th powers in $\rr(x)$ is a sum of at most six
4th powers.

\begin{theorem}
Conjecture 7.1 is true for $p \in W_{2,(1,4)} = Q_{2,4}$.
\end{theorem}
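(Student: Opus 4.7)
My plan is to handle both implications, with the reverse direction being short. Given $p = f^2 + g^2$ with $f, g \in P_{2,2}$, I would use $P_{2,2} = Q_{2,2}$ to write each of $f, g$ as $\ell_1^2 + \ell_2^2$ for linear forms $\ell_i$; then by the identity \eqref{E:h22}, each $(\ell_1^2 + \ell_2^2)^2$ is a sum of three fourth powers of linear forms, so $p$ is a sum of at most six fourth powers, i.e., $p \in Q_{2,4}$.

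For the forward direction, suppose $p \in Q_{2,4}$, and split on whether $p$ is positive definite. If $p$ is not pd then it has a real linear factor, and after an invertible change of variables I may assume $p(1,0) = 0$. Writing $p = \sum_k (\alpha_k x + \beta_k y)^4$, the vanishing of the coefficient of $x^4$ forces $\sum_k \alpha_k^4 = 0$, so each $\alpha_k = 0$; hence $p = c y^4$ for some $c \ge 0$, trivially equal to $(\sqrt{c}\, y^2)^2 + 0^2$ with both summands in $P_{2,2}$. Pulling back by the inverse change of variables preserves the psd property of each square root, completing this case.

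If instead $p$ is pd, Proposition 5.2 supplies $p \sim f_\lambda$ for some $\lambda \in (-\tfrac13, \tfrac13]$. Because $Q_{2,4}$ is a blender it is closed under $\sim$, so $f_\lambda \in Q_{2,4}$, and the catalecticant criterion \eqref{E:cata} (as computed in the proof of Theorem 5.6) forces $\lambda \in [0,1]$; combining, $\lambda \in [0, \tfrac13]$. I would then record the explicit decomposition
\begin{equation*}
f_\lambda(x,y) = (x^2 + 3\lambda y^2)^2 + \left(\sqrt{1 - 9\lambda^2}\, y^2\right)^2,
\end{equation*}
in which both summands are squares of psd quadratics, since $3\lambda \ge 0$ and $1 - 9\lambda^2 \ge 0$. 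Writing $p = f_\lambda \circ M$ for invertible $M$ and applying $\circ M$ to this identity yields $p = F^2 + G^2$ with $F, G \in P_{2,2}$, composition with a linear map preserving the psd property.

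The main obstacle—really the only substantive step—will be pinning down $\lambda \in [0, \tfrac13]$, which rests on Proposition 5.2 and the catalecticant characterization of $f_\lambda \in Q_{2,4}$ from the proof of Theorem 5.6. Once those are in hand, the explicit algebraic identity above finishes the argument without further work.
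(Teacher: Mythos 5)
Your proof is correct and follows the same route as the paper: reduce to $p \sim f_\lambda$ with $\lambda \in [0,\tfrac13]$ (you via Prop.\ 5.2 plus the catalecticant bound, the paper via $\Delta(Q_{2,4})=[0,1]$ plus the involution $T$), then use the explicit identity $f_\lambda = (x^2+3\lambda y^2)^2 + (1-9\lambda^2)(y^2)^2$ and pull back by the invertible change of variables. You are somewhat more thorough than the paper's one-line proof, since you spell out the reverse implication and the degenerate (non-pd, i.e.\ fourth-power) case, both of which the paper leaves implicit; but the substance is identical.
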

\begin{proof}
We have seen that if $p \in  W_{2,(1,4)}$, then $p \sim f_{\la}$ for
$\la \in [0,1]$. If $\la \in (\frac 13, 1]$, then $T(\la) \in [0,\frac
13)$, so it suffices to find a representation for $F_{\la}$ with $\la
\in [0,\frac 13]$. Such a representation is  
$f_{\la}(x,y) = (x^2 + 3\la y^2)^2 + (1-9\la^2)(y^2)^2$.
\end{proof}

\begin{theorem}
Conjecture 7.1 is true for even symmetric octics.
\end{theorem}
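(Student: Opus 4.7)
The plan is to combine a $V_4$-averaging reduction with an explicit $f^2+g^2$ construction. Parametrize even symmetric octics as $p = A(x^8 + y^8) + B(x^6 y^2 + x^2 y^6) + C x^4 y^4$. The easy direction follows from \eqref{E:h22}: writing any $f \in P_{2,4} = \Sigma_{2,4}$ as $f = h_1^2 + h_2^2$ with quadratic $h_i$, we have
\begin{equation*}
f^2 = (h_1^2 + h_2^2)^2 = \tfrac{1}{18}\left((\sqrt 3\, h_1 + h_2)^4 + (\sqrt 3\, h_1 - h_2)^4 + 16 h_2^4\right) \in W_{2,(2,4)},
\end{equation*}
so $f^2 + g^2 \in W_{2,(2,4)}$ whenever $f, g \in P_{2, 4}$.

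For the hard direction, let $V_4 = \langle \sigma, \tau\rangle$ act on $F_{2, 8}$, where $\sigma : x \leftrightarrow y$ and $\tau : x \mapsto -x$; its fixed subspace is precisely the even symmetric octics. If $p = \sum_k h_k^4 \in W_{2,(2,4)}$ is $V_4$-invariant, averaging gives $p = \tfrac{1}{4}\sum_k \sum_{\gamma \in V_4}(h_k \circ \gamma)^4$, so $p$ is a positive sum of $V_4$-orbit sums $S(\al, \be, \ga) := \sum_{\gamma \in V_4}((\al x^2 + \be xy + \ga y^2) \circ \gamma)^4$. Expanding via $(u+v)^4 + (u-v)^4 = 2(u^4 + 6u^2 v^2 + v^4)$ yields an explicit 3-parameter family whose conic hull is the even symmetric slice of $W_{2,(2,4)}$.

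The main obstacle is that $\{f^2 + g^2 : f, g \in P_{2, 4}\}$ is not a priori closed under addition (adding two such expressions yields a sum of four squares of psd quartics), so one cannot conclude from a generator-by-generator check alone. To surmount this, I would parametrize $(f, g)$ directly with tailored symmetry so that the range already sweeps out the conic hull: e.g.\ take $g = f \circ \sigma$ to force $\sigma$-symmetry of $f^2 + g^2$, and allow $f$ to combine $V_4$-symmetric and $V_4$-antisymmetric components so that $f^2 + g^2$ is also $\tau$-symmetric. This yields a parametric family of $p$ whose coefficients $(A, B, C)$ trace a region in $\rr^3$, and the hard calculation is verifying that this region agrees with the one produced by the orbit-sum description. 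Specializing to the slice $A = 1, B = 0$ then gives \eqref{E:48}: the lower boundary $\al = -\tfrac{14}{9}$ should emerge simultaneously as the extremal $W_{2,(2,4)}$ value and as the endpoint of the $f^2 + g^2$ parametrization.
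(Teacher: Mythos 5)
Your easy direction is correct and matches the observation in the paper: substituting quadratics into identity \eqref{E:h22} shows that the square of any psd binary quartic is a sum of three 4th powers of quadratics, so $f^2+g^2\in W_{2,(2,4)}$ whenever $f,g\in P_{2,4}$.

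For the hard direction, your averaging reduction is legitimate (if $p=\sum_k h_k^4$ is invariant under the relevant symmetry group, then $p$ is a nonnegative combination of orbit sums), and your instinct to look for an $f^2+g^2$ parametrization with $g=f\circ\sigma$ is actually the device the paper uses: its construction in the case $0\le T\le 8\al^4$ takes $f=(x^2-\sqrt\la\,xy-y^2)^2+\mu x^2y^2$ and $g=f\circ\sigma$. But the proposal stops exactly where the proof begins. You have not produced an explicit description of the even symmetric slice $\widetilde W$, and you have not verified that your $f^2+g^2$ family covers it; you explicitly defer both as ``the hard calculation.'' The paper does not obtain $\widetilde W$ from a generator/orbit-sum description (which requires computing the conic hull of a two-parameter surface in $\rr^3$, a nontrivial boundary analysis). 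Instead it works dually: it first characterizes $\partial(W^*)$ (Theorem 7.5), shows $(\widetilde W)^*=W^*\cap\widetilde F$ (Theorem 7.6), exhibits three explicit families in $W^*$ (Lemma 7.7) which yield linear inequalities on $(A,B,C)$, and then identifies the extreme rays $\psi_\la$ and $x^4y^4$ of $\widetilde W$ (Theorem 7.8). Only with that explicit region $B\ge-4A$, $36AC\ge B^2-64AB-56A^2$ in hand does the $f^2+g^2$ construction go through, and it requires a case split on $T\ge 8\al^4$ versus $0\le T\le 8\al^4$ with a square-root change of parameters to guarantee $\la,\mu\ge 0$. None of this appears in the proposal, so as written it is a plausible roadmap rather than a proof; and the orbit-sum route you sketch for describing $\widetilde W$ is likely to be substantially more laborious than the duality calculation the paper carries out.
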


It will take some work to get to the proof of Theorem 7.4. 
For the rest of this section, write $W:= W_{2,(2,4)}$. We first characterize
$\partial(W^*)$. 

\begin{theorem}
If $p \in \partial(W^*)$, then $p = (\al\cdot)^8$ or $p \sim q$, where
\begin{equation}\label{E:bdry}
q(x,y) = d_0 x^8 + 8d_1 x^7y + 28d_2 x^6y^2 + 28 d_6 x^2y^6 + 8 d_7 x
y^7 + d_8 y^8, 
\end{equation}
and
\begin{equation}\label{E:disc}
(6d_2u^2 + 6d_6 w^2)(d_0 u^4
+ 4d_2u^3w + 4d_6uw^3  + d_8 w^4)-( 2d_1u^3+2d_7w^3)^2
\end{equation}
is psd.
\end{theorem}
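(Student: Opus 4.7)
The plan is to dualize: by Prop.\!\! 3.9 (with $n=2$, $u=2$, $v=2$), $p\in W^*$ iff the associated ternary quartic $H_p\in P_{N(2,2),4}=P_{3,4}$, so $p\in\partial(W^*)$ iff $H_p$ is psd on $\rr^3$ and has a nontrivial zero $\xi=(\xi_0,\xi_1,\xi_2)$. Identifying $\xi$ with the quadratic $g_\xi:=\xi_0x^2+\xi_1xy+\xi_2y^2\in F_{2,2}$, the condition $H_p(\xi)=0$ is just $[p,g_\xi^4]=0$. Using (P3) for the blender $W^*$ (Prop.\!\! 2.7) together with the contravariance $[p\circ M,g^4]=[p,g^4\circ M^t]$ from Prop.\!\! 2.6, I would replace $p$ by $\bar p=p\circ M\sim p$ to put $g_\xi$ into one of the three $GL_2(\rr)$-canonical shapes of a nonzero binary quadratic: rank one ($g_\xi=y^2$), rank two indefinite ($g_\xi=xy$), or rank two definite ($g_\xi=x^2+y^2$).

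The main case is rank two indefinite, where $\xi=(0,1,0)$. Since $H_{\bar p}$ is a psd quartic form on $\rr^3$, every nontrivial zero is a global minimum and so the gradient vanishes there. Expanding $H_{\bar p}$ via \eqref{E:wardual} with $\bar p=\sum\binom{8}{k}a_kx^{8-k}y^k$, a direct computation gives $\nabla H_{\bar p}(0,1,0)=(4a_3,4a_4,4a_5)$, so $a_3=a_4=a_5=0$, and hence $\bar p$ has exactly the shape of $q$ in \eqref{E:bdry} with $d_k=a_k$.

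The other two cases must be reduced to Case $(\al\cdot)^8$ or to the above. In the rank-one case $g_\xi=y^2$ one has $\bar p(0,1)=a_8=0$; combining psd of $\bar p$ with the vanishing of the gradient of $H_{\bar p}$ at $(0,0,1)$ (which forces $a_6=a_7=a_8=0$) and of its Hessian (which forces $a_5=0$) yields $\bar p=x^4h(x,y)$ with $h$ a psd binary quartic. I would then argue: if the global psd constraint on $H_{\bar p}$ forces $h$ to be proportional to $x^4$, then $\bar p=c\,x^8=(\al\cdot)^8$; otherwise $h$ admits a nontrivial real linear factor $\al x+\be y$ with $\be\neq 0$, and $g':=(\al x+\be y)^2$ produces, via $[\bar p,(g')^4]=\bar p(\al,\be)=0$, a second zero of $H_{\bar p}$ at a non-proportional point of the Veronese that can be combined with the original one (by another $GL_2(\rr)$ change) to yield a rank-two zero, sending us back into the main case. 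The rank-two definite case $g_\xi=x^2+y^2$ is handled analogously by analyzing $\nabla H_{\bar p}(1,0,1)$ and invoking the same reductive scheme.

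For the discriminant condition, starting from $\bar p=q$ of the specified shape and expanding $H_q$ from \eqref{E:wardual} with $a_3=a_4=a_5=0$, grouping by powers of $t_1$ yields
\begin{equation*}
H_q(t_0,t_1,t_2)=A\,t_1^2+B\,t_1+C,
\end{equation*}
with $A=6d_2t_0^2+6d_6t_2^2$, $B=4d_1t_0^3+4d_7t_2^3$, and $C=d_0t_0^4+4d_2t_0^3t_2+4d_6t_0t_2^3+d_8t_2^4$. For $H_q$ to be psd on $\rr^3$, this quadratic in $t_1$ must be non-negative for every $(t_0,t_2)$, forcing $A\ge 0$ and $4AC\ge B^2$; the latter is $AC-(B/2)^2\ge 0$, precisely the condition that \eqref{E:disc} is psd. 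The main obstacle I anticipate is closing out the rank-one reduction: turning the assumption that $H_{\bar p}$ is psd on all of $\rr^3$ (not merely vanishing at the prescribed $\xi$) into the clean dichotomy $\bar p=c\,x^8$ versus the production of an auxiliary rank-two zero of $H_{\bar p}$; this is where the factorization structure of the quartic $h$ on $\bar p=x^4h$ must be leveraged carefully.
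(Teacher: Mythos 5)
Your overall strategy — dualize via Prop.\ 3.9 so that $p\in\partial(W^*)$ iff the ternary quartic $H_p$ is psd with a nontrivial zero, then reduce the vanishing quadratic to one of the three $GL_2$-canonical shapes $y^2$, $xy$, $x^2+y^2$ via Prop.\ 2.6/2.7 — is exactly the paper's. Your handling of the main case $g_\xi=xy$ is correct and arguably cleaner than the paper's: you invoke the first-order necessary condition at a minimum of a psd form (vanishing gradient), while the paper reads off the same conclusion $d_3=d_4=d_5=0$ from the requirement that a psd quartic with zero $v^4$-coefficient cannot have a $v^3$ term. The discriminant-in-$v$ derivation of \eqref{E:disc} matches.

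However, there are two genuine gaps in the other cases. For the rank-one case ($g_\xi=y^2$, i.e.\ $\xi=(0,0,1)$), the gradient gives $d_6=d_7=d_8=0$ and the psd Hessian gives $d_5=0$, so you arrive at $\bar p=x^4h$ with $h$ a psd quartic. But you cannot conclude from there by the proposed dichotomy: if $h$ is positive definite and not proportional to $x^4$ (e.g.\ a multiple of $x^4+y^4$), it has no real linear factor at all, so the auxiliary zero $g'=(\al x+\be y)^2$ you want does not exist. The point is that $H_{\bar p}$ being psd on all of $\rr^3$ in fact forces $d_4=0$ as well (and then $d_3=d_2=d_1=0$, so $\bar p$ is a pure eighth power), but that requires higher-order information beyond the gradient and Hessian at the zero. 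The paper extracts it by organizing $H_q$ as a quadratic in $u$ and examining the discriminant, whose leading $v^4w^2$-coefficient is $-30d_4^2$; this is the step your argument is missing, and it is not recoverable from second-order data alone.

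For the definite case $g_\xi=x^2+y^2$, "handled analogously" undersells what happens: the outcome there is not a normal form for $q$ but rather $q=0$. The paper's argument is of a different flavor — it uses the explicit decomposition \eqref{E:h24} of $(x^2+y^2)^4$ as a positive combination of eighth powers, so that $[q,(x^2+y^2)^4]=0$ and $q$ psd force $q$ to vanish at six projectively distinct points, which is impossible for a nonzero psd binary octic. A gradient computation at $(1,0,1)$ would not by itself reach that conclusion; you need the interior-point fact that $(x^2+y^2)^4\in int(W)$, or the explicit identity. So this case also needs to be filled in, and the argument there really is different from your reductive scheme.
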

\begin{proof}
Consider a typical element $q \in W^*$,
\begin{equation}\label{E:octdef}
q(x,y) = \sum_{k=0}^8 \binom 8k d_k x^{8-k}y^k.
\end{equation}
Then as in Prop.\!\! 3.9, 
\begin{equation}\label{E:Hoct}
\begin{gathered}[]
H_q(u,v,w):= [q,(u x^2 + v x y + w y^2)^4] = d_0u^4  + 4 d_1 u^3 v + d_2(6 u^2 v^2
 + 4 u^3 w)  \\ +
d_3( 4 u v^3  + 12 u^2 v w) + d_4(v^4  + 12 u v^2 w  + 
 6 u^2 w^2) + d_5(4 v^3 w  + 12 u v w^2) \\ + d_6(6 v^2 w^2  + 
 4 u w^3) + 4 d_7v w^3  + d_8 w^4 
\end{gathered}
\end{equation}
is a psd ternary quartic in $u,v,w$.
If $ q \in \partial(W^*)$, then $[q,h^2] = 0$ for some non-zero
quadratic $h$. Since $\pm h \sim x^2, xy, x^2+y^2$, 
it suffices by Prop.\!\! 2.6 to consider three cases: $[q,x^8]=0,
[q,x^4y^4]=0$ and $[q,(x^2+y^2)^4] = 0$. Since  
\begin{equation}\label{E:h24}
420(x^2+y^2)^4 = 256(x^8+y^8) + \sum_{\pm} (x \pm \sqrt 3 y)^8 +
( \sqrt 3 x \pm y)^8, 
\end{equation}
 $[q,(x^2+y^2)^4] = 0$ implies that $q(1,0) = q(0,1) = q(1, \pm \sqrt
 3) = q(\sqrt3 , \pm 1) = 0$; since $q$ is psd,  $q=0$. (An alternate
 proof derives 
 this result from $(x^2 + y^2)^4 \in int(Q_{2,8})$ by 
 \cite[Thm.8,15(ii)]{Re1}, so $(x^2+y^2)^4 \in int(W)$.)

Suppose $[h,(x^2)^4]=0$; that is,
$H_q(1,0,0) = 0$. Then 
$d_0 =0$, and since $H_q$ is now at most quadratic in $u$, it follows
that $d_1=d_2 = 0$. This implies that the coefficient of $u^2$ in
$H_q$ is $12d_3 vw + 6d_4w^2$, hence $d_3=0$ and
\begin{equation}
\begin{gathered}
H_q(u,v,w) = u^2(6d_4w^2) + 2u(2d_6w^3 + 6d_5vw^2+6d_4v^2w) \\ + (d_8w^4
+ 4d_7 w^3v+6d_6w^2v^2+4d_5 wv^3+ d_4v^4).
\end{gathered}
\end{equation}
Since $H_q$ is psd if and only if its discriminant with respect to $u$ is
psd in $v,w$, and this discriminant is $-30d_4^2 v^4w^2 + $ lower terms in $v$,
$d_4=0$. Since $H_q$ cannot be linear in $u$, it follows that
$d_5=d_6=0$ and $H_q(u,v,w) = d_8w^4 + 4d_7w^3v$, which is only psd
if $d_7=0$, so that $q(x,y) = d_8y^8$ is an 8th power. 

Finally, suppose $[q,x^4y^4] = 0$; that is, $H_q(0,1,0) = d_4 =
0$. Since $H_q$ is at most 
quadratic in $v$, it follows that $d_3=d_5 = 0$ as well, so $q$ has the
shape \eqref{E:bdry} and
\begin{equation}
\begin{gathered}
H_q(u,v,w) =  v^2(6d_2u^2 + 6d_6 w^2)  \\ + 2v( 2d_1u^3+2d_7w^3) + d_0 u^4
+ 4u^3w d_2 + 4uw^3 d_6 + d_8 w^4;
\end{gathered}
\end{equation}
$H_q$ is psd if and only if its discriminant with respect to $v$,
namely \eqref{E:disc}, is psd.
\end{proof}

It should be possible to characterize $\mathcal E(W^*)$, though we do
not do so here. One family of extremal elements is parameterized by
 $\al \in \rr$: 
\begin{equation}
\omega_{\al}(x,y):= x^8 + 28 x^2 y^6 + 24  \al x y^7 + 3(1 + 2\al^2) y^8
\in \mathcal E(W^*). 
\end{equation}
In this case, 
\begin{equation}
\begin{gathered}
H_{\om_{\al}}(u,v,w) = 6 v^2 w^2 + 12 \al v w^3 +  u^4  + 4 u w^3 + (3 +
6\al^2)w^4 \\ = 6 (v w + \al w^2)^2 + (u+w)^2(u^2-2uw+3w^2) 
\end{gathered}
\end{equation}
is psd; $H_{\om_\al}(0,1,0) = H_{\om_\al}(1,\al,-1) = 0$,
and $H_{\om_\al}(u,v,0)=u^4$ has a 4th order zero at $(0,1,0)$.
It is unclear whether $\om_{\al}$ has other interesting algebraic properties.

We now simplify matters by limiting our attention to even symmetric
octics. Let 
\begin{equation}\label{E:tildeF}
\widetilde F = \{ ((A,B,C)):= A x^8 + B x^6y^2 + C x^4y^4 + B x^2y^6 +
A y^8\ : \ A, 
B, C \in \rr\}.
\end{equation}
denote the cone of even symmetric octics, and let
\begin{equation}
\widetilde W = W \cap \widetilde F.
\end{equation}
Then $\widetilde W$ is no longer a blender, because (P3) fails
spectacularly. However, it is still a closed convex cone. We give
the inner product explicitly:
\begin{equation}\label{E:esoip}
p_i = ((A_i,B_i,C_i)) \implies [p_1,p_2] = A_1A_2 + \tfrac{B_1B_2}{28}
+ \tfrac{C_1C_2}{70} +  \tfrac{B_1B_2}{28} +  A_1A_2.
\end{equation}
 Let $(\widetilde W)^* \subset \widetilde F$
denote the dual cone to $\widetilde W$. Here is a special
case of \cite[p.142]{Re1}.
\begin{theorem}
$(\widetilde W)^* = W^* \cap \widetilde F$.
\end{theorem}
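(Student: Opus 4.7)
The plan is to show the two inclusions $W^* \cap \widetilde F \subseteq (\widetilde W)^*$ and $(\widetilde W)^* \subseteq W^* \cap \widetilde F$ separately. The first inclusion is immediate: if $q \in W^* \cap \widetilde F$ and $p \in \widetilde W \subseteq W$, then $[q,p] \ge 0$ directly from $q \in W^*$, and such a $q$ lies in $\widetilde F$ by hypothesis, hence in $(\widetilde W)^*$.

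For the reverse inclusion, the key idea is a group-averaging argument. Let $G$ be the order-$8$ group of linear maps on $\rr^2$ generated by $\sigma_1:(x,y)\mapsto(y,x)$, $\sigma_2:(x,y)\mapsto(-x,y)$, and $\sigma_3:(x,y)\mapsto(x,-y)$. A form in $F_{2,8}$ lies in $\widetilde F$ precisely when it is $G$-invariant, i.e., $q \circ g = q$ for all $g \in G$. Define the Reynolds operator $\pi:F_{2,8}\to \widetilde F$ by $\pi(p) = \tfrac 18 \sum_{g\in G} p\circ g$. If $p \in W$, then each $p \circ g \in W$ by (P3), so $\pi(p)\in W$ by (P1), and since $\pi(p) \in \widetilde F$ by construction, $\pi(p)\in W\cap \widetilde F = \widetilde W$.

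The crucial observation is that each generator of $G$ is represented by a symmetric matrix (a permutation matrix or a diagonal sign matrix), and the group is closed under transposition, so every $g\in G$ satisfies $g^t = g^{-1}$ with $g^{-1}\in G$. Thus by the contravariance Prop.\!\! 2.6, for $q\in\widetilde F$ and $p\in F_{2,8}$,
\begin{equation*}
[q, p\circ g] = [q\circ g^t, p] = [q\circ g^{-1}, p] = [q,p].
\end{equation*}
Averaging gives $[q,\pi(p)] = [q,p]$.

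Now suppose $q \in (\widetilde W)^*$; in particular $q \in \widetilde F$. For any $p \in W$, we have $\pi(p) \in \widetilde W$, so $[q,p] = [q,\pi(p)] \ge 0$. Thus $q \in W^*$, and the reverse inclusion follows. The main step to get right is the contravariance identity combined with the self-adjointness of the group elements; once that is in place the averaging does all the work, and there is no serious obstacle.
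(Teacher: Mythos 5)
Your proof is correct and is essentially the same as the paper's: the paper averages $r$ over the four explicit maps $(x,y)\mapsto(x,y),\,(x,-y),\,(y,x),\,(y,-x)$ (which, since $-I$ acts trivially on octics, are exactly coset representatives of your order-$8$ group modulo $\pm I$) and uses $q\in\widetilde F$ to get $[r_j,q]=[r,q]$; you have packaged the identical averaging in Reynolds-operator language via Prop.\!\! 2.6. One small wording note: ``closed under transposition'' does not by itself yield $g^t=g^{-1}$ --- the relevant point is that every $g\in G$ is a signed permutation matrix, hence orthogonal, so $g^t=g^{-1}\in G$; your generators are symmetric involutions, but products such as $\sigma_1\sigma_2$ are not symmetric, and orthogonality is what carries through.
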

\begin{proof}
 Suppose $p \in \widetilde W$
and $q \in  W^* \cap \widetilde F$. Then  $p \in W$ and $q \in W^*$ imply
$[p,q] \ge 0$, so $q \in (\widetilde W)^*$. Suppose now that $q \in
(\widetilde W)^*$; we wish to show that $q \in W^*$.  Pick $r \in W$,
and let  $r_1 = r$, $r_2(x,y) = r(x,-y)$, $r_3(x,y) = r(y,x)$ and
$r_4(x,y) = r(y,-x)$. Since $q \in \widetilde F$, $[r_j,q] = [r,q]$
for $1 \le j \le 4$, and since  $p = r_1+r_2+r_3+r_4 \in \widetilde W$,
$0 \le [p,q] = 4[r,q]$. Thus, $[r,q] \ge 0$ as desired.
 \end{proof}
We need not completely analyze $(\widetilde W)^*$ to determine
$\widetilde W$. The following suffices. 
\begin{lemma}
If $q =((1,0,0))$,
$((4,28,0))$ or $((6-4\la^2+3\la^4, 28(6-\la^2), 420))$, $\la \in
\rr$,  then $q \in W^*$.
\end{lemma}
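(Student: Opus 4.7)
The plan is to invoke Theorem 7.5: $q \in W^*$ iff $H_q(u,v,w) := [q,(ux^2+vxy+wy^2)^4]$ is psd as a ternary quartic. For each of the three forms I would read off the coefficients $d_i$ of $q$ and substitute into \eqref{E:Hoct}. For $q = x^8+y^8$, only $d_0 = d_8 = 1$ are nonzero, so $H_q = u^4 + w^4 \ge 0$ trivially. For $q = 4x^8 + 28(x^6y^2 + x^2y^6) + 4y^8$, one has $d_0 = d_8 = 4$, $d_2 = d_6 = 1$, $d_4 = 0$, giving
\begin{equation*}
H_q = 4(u^4 + u^3w + uw^3 + w^4) + 6v^2(u^2+w^2) = 4(u+w)^2(u^2-uw+w^2) + 6v^2(u^2+w^2),
\end{equation*}
which is manifestly nonneg since $u^2 - uw + w^2 = (u-w/2)^2 + 3w^2/4$.

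The $\la$-family is the main work. With $d_0 = d_8 = 6-4\la^2+3\la^4$, $d_2 = d_6 = 6-\la^2$, $d_4 = 6$, expanding \eqref{E:Hoct} and passing to the nonneg invariants $S = (u+w)^2$, $D = (u-w)^2$, $V = v^2$ yields
\begin{equation*}
8H_q = (48 - 8\la^2 + 3\la^4)S^2 + 3\la^4 D^2 + 6\la^2(3\la^2-4)SD + 24(12-\la^2)SV - 24\la^2 DV + 48V^2.
\end{equation*}
I would view this as a quadratic in $V$ with positive leading coefficient $48$ and minimize over $V \ge 0$; the unconstrained vertex sits at $V_* = [\la^2 D - (12-\la^2)S]/4$, giving two cases. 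When $V_* \le 0$, the minimum over $V \ge 0$ is at $V = 0$, reducing the question to whether $C(S,D) := (48-8\la^2+3\la^4)S^2 + 3\la^4 D^2 + 6\la^2(3\la^2-4)SD$ is nonneg for $S,D \ge 0$. This holds because the discriminant of $C$ as a quadratic form is $96\la^6(3\la^2-8)$, negative for $\la^2 < 8/3$ (so $C$ is positive definite on $\rr^2$), while for $\la^2 \ge 4/3$ all three coefficients of $C$ are nonneg. When $V_* > 0$, completing the square shows the minimum equals $C(S,D) - 3[\la^2 D - (12-\la^2)S]^2$; a short computation simplifies this to $64(\la^2-6)S^2 + 12\la^2(\la^2+4)SD$. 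For $\la^2 \ge 6$ both coefficients are nonneg, and for $\la^2 < 6$ the active constraint $\la^2 D \ge (12-\la^2)S$ (equivalently $D/S \ge (12-\la^2)/\la^2$ when $\la^2 < 12$) dominates the required bound $D/S \ge 16(6-\la^2)/[3\la^2(\la^2+4)]$ once one checks the elementary inequality $48 + 40\la^2 - 3\la^4 \ge 0$ on $[0,6]$ (the quadratic $48+40t-3t^2$ is increasing up to $t=20/3$ with positive value at $t=0$).

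The main anticipated obstacle is bookkeeping across these subranges of $\la^2$ without sign errors, since the natural SOS decomposition of $H_q$ is not uniform in $\la$. Two endpoint computations will validate the strategy and catch arithmetic mistakes: at $\la = 0$, $q = 3[(x+y)^8 + (x-y)^8] \in Q_{2,8}$, so by Prop.\ 2.1(iii) one obtains $H_q = 3[(u+v+w)^4 + (u-v+w)^4]$; and at $\la^2 = 6$, where $d_2 = d_6 = 0$, the quartic collapses to the transparent two-square decomposition $H_q = 6(v^2 + 6uw)^2 + 90(u^2-w^2)^2$.
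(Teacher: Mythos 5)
Your proposal is correct, and the first two cases coincide with the paper's computation verbatim: $H_q = u^4+w^4$ for $((1,0,0))$ and $H_q = 4(u+w)^2(u^2-uw+w^2) + 6(u^2+w^2)v^2$ for $((4,28,0))$.

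For the $\la$-family you take a genuinely different route. The paper produces a single SOS certificate, uniform in $\la \in \rr$,
\begin{equation*}
2H_q = 48(u+w)^2v^2 + 4\la^2(u+w)^4 + 3\la^4(u^2-w^2)^2 + 3\bigl(2v^2 + 2(u+w)^2 - \la^2(u^2+w^2)\bigr)^2,
\end{equation*}
which finishes the lemma in one line once the identity is verified. You instead pass to the squared invariants $S=(u+w)^2$, $D=(u-w)^2$, $V=v^2$, minimize the resulting quadratic in $V$ over the nonnegative orthant, and split into subranges of $\la^2$. I checked your formula for $8H_q$ in $(S,D,V)$, the discriminant $96\la^6(3\la^2-8)$, the post-completion residual $64(\la^2-6)S^2 + 12\la^2(\la^2+4)SD$, and the closing inequality $48+40\la^2-3\la^4 \ge 0$ on the relevant range; all are correct, and the case logic ($V_*\le 0$ vs.\ $V_*>0$, further split at $\la^2=6$, with $\la^2\ge 4/3$ and $\la^2<8/3$ covering the $V_*\le 0$ branch) is airtight. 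Your method is more algorithmic and avoids having to guess the SOS decomposition, at the cost of a multiway case analysis; the paper's proof is shorter but requires producing the identity out of thin air. One small correction: your concluding remark that ``the natural SOS decomposition of $H_q$ is not uniform in $\la$'' is not borne out---the displayed identity above is uniform in $\la$; what is true is that it is not an SOS purely in the variables $S,D,V$ (the term $48SV$ is a product, not a square), which is why the certificate is less visible after your substitution.
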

\begin{proof}
Using the notation of \eqref{E:octdef}, suppose 
\begin{equation}
q(x,y) = ((d_0,28d_2,70d_4)) = d_0^8 + 28 d_2x^6y^2 + 70 d_4 x^4y^4 +
28 d_2 x^2y^6 + d_0 y^8. 
\end{equation}
Comparison with \eqref{E:esoip}  shows that
\begin{equation}\label{E:wstar}
q \in \widetilde W^* \iff ((A,B,C)) \in \widetilde W \implies 2d_0 A +
2d_2 B +d_4 C \ge 0. 
\end{equation}
On the other hand, \eqref{E:Hoct} and Theorem 7.6 imply that $q \in
\widetilde W^*$ if and only if
\begin{equation}
H_q(u,v,w) = d_0(u^4+w^4) + d_2(u^2+w^2)(6v^2 + 4uw) + d_4(v^4 +
12uv^2w +6u^2w^2)  
\end{equation}
is psd. If $(d_0,d_2,d_4) = (1,0,0)$, then $H_q(u,v,w) =
u^4+w^4$, which is psd,
and if $(d_0,d_2,d_4) = (4,1,0)$, then 
\begin{equation}
H_q(u,v,w) =  4(u+w)^2(u^2-uw+w^2) + 6(u^2+w^2)v^2.
\end{equation}
Finally, if $(d_0,d_2,d_4) = (6-4\la^2+3\la^4, 6-\la^2, 6)$, then a
computation gives
\begin{equation}
\begin{gathered}
2H_q(u,v,w) = 2(6-4\la^2+3\la^4)(u^4+w^4)  \\ +
2(6-\la^2)(u^2+w^2)(6v^2 + 4uw) + 
12(v^4 + 12uv^2w +6u^2w^2)  \\
= 48(u+w)^2v^2 + 4\la^2(u+w)^4 + 3\la^4(u^2-w^2)^2 \\ +
3(2v^2 + 2(u+w)^2 - \la^2(u^2+w^2))^2.
\end{gathered}
\end{equation}
Note that $H_q(1,\pm \la, -1) = 0$.
\end{proof}

An important family of elements in $\widetilde W$ is
\begin{equation}
\begin{gathered}
\psi_\la(x,y) : = \tfrac 12\left( (x^2 + \la xy - y^2)^4 + (x^2 - \la xy -
  y^2)^4 \right)\\ = 
((1,\ 6\la^2-4,\ \la^4-12\la^2+6))
\end{gathered}
\end{equation}
\begin{theorem}
The extremal elements of $\widetilde W$ are $x^4y^4$ and $\{\psi_{\la} :
 \la \ge 0\}$. Hence $p =((A,B,C)) \in \widetilde W$ if and only if
\begin{equation}\label{E:cond}
\begin{gathered} 
A = B=0,\ C \ge 0,\quad \text{or}\quad  A > 0,\ B \ge - 4A,\ 36AC \ge
B^2 - 64AB - 56A^2.  
\end{gathered}
\end{equation}
\end{theorem}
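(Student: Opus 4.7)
The strategy has three main steps: derive the defining linear inequalities from the three dual elements in Lemma 7.7 (necessity), realize any candidate as a conic combination $A\psi_\lambda + \mu\, x^4 y^4$ (sufficiency), and deduce extremality from this representation via a Jensen-type convexity bound.

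For necessity, Theorem 7.6 places each of the three duals in Lemma 7.7 inside $(\widetilde W)^*$, so the inequality \eqref{E:wstar} must hold. The dual $((1,0,0))$ yields $A \ge 0$; $((4,28,0))$ yields $B \ge -4A$; and the one-parameter family $((6-4\lambda^2+3\lambda^4,\,28(6-\lambda^2),\,420))$ gives, with $t = \lambda^2 \ge 0$,
\[
6At^2 - (8A+2B)t + (12A+12B+6C) \ge 0 \quad \text{for all } t \ge 0.
\]
If $A > 0$, the inequality $B \ge -4A$ places the parabola's vertex at $t^* = (4A+B)/(6A) \ge 0$, and requiring the minimum value at $t^*$ to be non-negative simplifies after clearing $6A$ to $36AC \ge B^2 - 64AB - 56A^2$. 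If $A = 0$, sending $\lambda \to \infty$ forces $B = 0$ and then $C \ge 0$.

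For sufficiency, if $A = B = 0$ then $((0,0,C)) = C(xy)^4 \in \widetilde W$ immediately. For $A > 0$, set $\lambda = \sqrt{(B+4A)/(6A)} \ge 0$, which is real since $B \ge -4A$. A direct check shows $A\psi_\lambda$ has $A$- and $B$-coordinates equal to $A$ and $B$, and a routine algebraic simplification identifies the $C$-coordinate of $((A,B,C)) - A\psi_\lambda$ as $(36AC - B^2 + 64AB + 56A^2)/(36A)$, non-negative by hypothesis. Hence $((A,B,C)) = A\psi_\lambda + \mu\, x^4 y^4$ with $\mu \ge 0$; both summands lie in $\widetilde W$ because $x^4 y^4 = (xy)^4$ and $\psi_\lambda$ is visibly a sum of two 4th powers of quadratics.

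For extremality, sufficiency shows $\widetilde W$ is the conic hull of $\{x^4 y^4\} \cup \{\psi_\lambda : \lambda \ge 0\}$, so every extremal element lies among these. Extremality of $x^4 y^4$ is immediate: any decomposition $x^4 y^4 = p + q$ in $\widetilde W$ has $A_p + A_q = 0$ with $A_p, A_q \ge 0$, forcing $A_p = A_q = 0$ and then $B_p = B_q = 0$, so both summands are multiples of $x^4 y^4$. For $\psi_\lambda$, suppose $\psi_\lambda = p + q$ and apply the sufficiency decomposition to write $p = A_p\psi_{\mu_p} + \nu_p(xy)^4$ and likewise for $q$. Matching all three coordinates gives $A_p + A_q = 1$, $\lambda^2 = A_p\mu_p^2 + A_q\mu_q^2$, and $\lambda^4 = A_p\mu_p^4 + A_q\mu_q^4 + (\nu_p + \nu_q)$; Jensen's inequality $(A_p\mu_p^2 + A_q\mu_q^2)^2 \le A_p\mu_p^4 + A_q\mu_q^4$ forces $\nu_p + \nu_q = 0$ and, via the equality case, $\mu_p = \mu_q = \lambda$, so $p$ and $q$ are both multiples of $\psi_\lambda$. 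The degenerate subcase where some $A_i = 0$ reduces to showing $\psi_\lambda - \nu(xy)^4 \notin \widetilde W$ for $\nu > 0$, which is immediate because $\psi_\lambda$ saturates the equality $36AC = B^2 - 64AB - 56A^2$ and subtracting $\nu\, x^4 y^4$ strictly decreases $C$ alone. I expect the extremality of $\psi_\lambda$ to be the main obstacle, especially the bookkeeping of degenerate subcases and the careful verification that $36A$ times the leftover $C$-coordinate is exactly $36AC - B^2 + 64AB + 56A^2$.
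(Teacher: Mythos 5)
Your proof is correct, and for the characterization \eqref{E:cond} it follows essentially the paper's route: both use the three dual elements from Lemma 7.7 and \eqref{E:wstar}. The only cosmetic difference is that you minimize the quadratic $6At^2-(8A+2B)t+(12A+12B+6C)$ over $t\ge 0$, whereas the paper substitutes the specific value $\la=\al=\sqrt{(B+4)/6}$ into \eqref{E:hcond} directly; these are the same computation. Likewise your sufficiency step ($p = A\psi_\lambda + \mu\, x^4y^4$ with $\mu\ge 0$) is identical to the paper's.

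Where you go genuinely beyond the paper is the extremality clause. The paper's proof establishes only the biconditional in \eqref{E:cond} and the conic-hull representation; it does not actually verify that $x^4y^4$ and each $\psi_\lambda$ are extremal (a conic generating set need not consist of extremal rays), so the first sentence of the theorem is, strictly speaking, left unproved there. Your argument supplies this: $A_p+A_q=0$ forcing $A_p=A_q=0$ (hence $B_p=B_q=0$) gives extremality of $x^4y^4$, and the strict-convexity equality case of Jensen applied to $\la^2 = A_p\mu_p^2 + A_q\mu_q^2$ and $\la^4 = A_p\mu_p^4 + A_q\mu_q^4 + (\nu_p+\nu_q)$ gives extremality of $\psi_\lambda$, with the degenerate $A_i=0$ subcase handled correctly via the observation that $\psi_\lambda$ sits on the equality locus $36AC = B^2-64AB-56A^2$, so subtracting $\nu\, x^4y^4$ with $\nu>0$ breaks membership. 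This is a worthwhile addition; the rest tracks the paper's computations, including the verification that the leftover $C$-coordinate is $(36AC - B^2 + 64AB + 56A^2)/(36A)$, which matches the paper's \eqref{E:parab}.
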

\begin{proof}
By Lemma 7.7 and \eqref{E:wstar}, if $p \in \widetilde W$, then $A\ge
0$, $A + 4B \ge 0$ and 
\begin{equation}\label{E:hcond}
2(6 - 4\la^2 + 3\la^4)A + 2(6 - \la^2) B + 6C \ge 0. 
\end{equation}
We have $A = p(1,0)=p(0,1)\ge 0$, and if $A=0$ and $p = \sum h_k^4$, then $xy
| h_k$, hence $p = [0,0,C]$ with $C \ge 0$. Otherwise, assume that $A =
1$, so that \eqref{E:cond} becomes
\begin{equation}
B \ge -4, \quad C \ge \tfrac 1{36}(B^2 - 64B -56).
\end{equation}
The first inequality follows from $((4,28,0)) \in \widetilde W^*$, and
we can thus write $B = 
6\al^2 - 4$, where $\al = \sqrt{\frac {B+4}6}$. Put $\la = \al$ in
\eqref{E:hcond} to obtain 
\begin{equation}\label{E:parab}
C \ge \al^4 - 12\al^2 + 6 = \tfrac 1{36}(B^2 - 64B -56).
\end{equation}

Conversely, suppose $p=((A,B,C))$ satisfies \eqref{E:cond}. If $A = 0$, then
$p = c x^4y^4 \in \widetilde W$. If $A > 0$, then we can take $A=1$ and
substitute $B = 6\al^2 - 4$, so that, by \eqref{E:parab},
\begin{equation}
p = ((1,B,C)) = ((1,6\al^2-4, \al^4-12\al^2-6)) + ((0,0,\gamma)) =
\psi_{\la}(x,y) + \gamma x^4y^4
\end{equation}
for some $\gamma \ge 0$, hence $p \in \widetilde W$.
\end{proof}

Taking $(A,B) = (1,0)$, we obtain \eqref{E:48}. Suppose $\la, \mu \ge
-2 $. Then Theorem 7.6 implies that (c.f. \eqref{E:flam}) 
$f_{\la}(x,y)f_{\mu}(x,y) \in W$ if and only if
\begin{equation}
(17 -12\sqrt 2) (\la+2) \le \mu+2 \le (17 + 12\sqrt 2) (\la+2)
\end{equation}
There is a peculiar resonance with the example after Theorem 4.7.

\begin{proof}[Proof of Theorem 7.4]
Suppose the even symmetric octic 
 $((A,B,C))$ satisfies \eqref{E:cond}. If $A=0$, then $((0,0,C)) =
C(x^2y^2)^2$. Otherwise, again suppose $A=1$ and write $B = 6\al^2-4$, so
\begin{equation}
B = 6\al^2-4, \quad C =  \tfrac 1{36}(B^2 - 64B -56) + T =
 \al^4 - 12\al^2 + 6 + T, \quad T \ge 0. 
\end{equation}
 Observe that
\begin{equation}
\begin{gathered}
(x^4 + (3\al^2-2) x^2y^2 + y^4)^2 + (T- 8\al^4)(x^2y^2)^2 \\=
((1,6\al^2-4,9\al^4 - 12 \al^2 + 6)) + ((0,0,T-8\al^4)) = ((1,B,C)),
\end{gathered}
\end{equation}
so if $T \ge 8\al^4$, then we are done. Otherwise, $0 \le T \le 8\al^4$. 
Finally, note that
\begin{equation}
\begin{gathered}
\tfrac12 \left( \bigl((x^2 - \sqrt{\la} x y - y^2)^2 + \mu x^2 y^2\bigr)^2 +
  \bigl((x^2 + \sqrt{\la}  x y - y^2)^2 + \mu x^2 y^2\bigr)^2\right)\\ = ((1,
6\la+2\mu-4, 
6-12\la+\la^2-4\mu+2\la\mu+\mu^2)) 
\end{gathered}
\end{equation}
is a sum of two squares of psd forms if $\mu \ge 0$. One
solution to the system
\begin{equation}
\begin{gathered}
 6\al^2-4=  6\la+2\mu-4,  \al^4 - 12\al^2 + 6 +  T =
 6-12\la+\la^2-4\mu+2\la\mu+\mu^2  
\end{gathered}
\end{equation}
is  
\begin{equation}
\begin{gathered}
\la = \frac{ 3\al^2 - \sqrt{\al^4+T}}2, \quad 
 \mu = \frac{3(\sqrt{\al^4+T} - \al^2)}2.
\end{gathered}
\end{equation}
Evidently, $\mu \ge 0$; since $T \le 8\al^4$, $\la \ge 0$, so
$\sqrt{\la}$ is real. 
\end{proof}

\section{Bibliography}

\end{document}